\documentclass[11pt]{amsart}
\usepackage[margin=1in]{geometry}
\usepackage{amsmath,amssymb,amsthm,mathtools,microtype,hyperref}
\usepackage{xcolor}

\usepackage{pdfsync}
\usepackage{comment}

\newcommand{\C}{{\mathbb C}}       
\newcommand{\R}{{\mathbb R}}       
\newcommand{\N}{{\mathbb N}}

\newcommand{\vv}{{\vspace{2mm}}}

\newcommand{\abs}[1]{ \left \lvert {#1}  \right \rvert   }

\numberwithin{equation}{section}

\newtheorem{theorem}{Theorem}[section]
\newtheorem{lemma}[theorem]{Lemma}

\theoremstyle{remark}

\title[Dimension drop]{Failure of dimension drop for harmonic measure on Ahlfors regular flat boundaries}

\author{Yingying Cai}
\address{Yingying Cai: Departament de Matemàtiques, Universitat Autònoma de Barcelona, Barcelona, Catalonia.}
\email{yingying.cai@uab.cat}

\author{Cole Jeznach}
\address{Cole Jeznach: Departament de Matemàtiques, Universitat Autònoma de Barcelona, Barcelona, Catalonia.}
\email{cole.jeznach@uab.cat}

\author{Xavier Tolsa}
\address{Xavier Tolsa: ICREA, Barcelona, Departament de Matemàtiques, Universitat Autònoma de Barcelona, and Centre de Recerca Matemàtica, Barcelona, Catalonia.}
\email{xavier.tolsa@uab.cat}

\date{\today}

\thanks{The authors were partially supported by MICIU (Spain) under the grant PID2024-160507NB-I00. C.J. was also supported by NSF-DMS-2503326.
}

\begin{document}
\begin{abstract}
We develop an $N$-adic modification of the construction of David--Jeznach--Julia for compact subsets of the real line $\R\subset\C$ for which harmonic measure for the complementary domains has no dimension drop. In particular, we show that for every $s\in(0,1/2)$, there exists an $s$-Ahlfors regular compact set $K\subset\R\subset\C$ such that $\dim\omega^\infty=s$, where $\omega^\infty$ denotes harmonic measure for $\C\setminus K$ with pole at infinity. Combined with a result of the third author, this shows that the threshold $1/2$ is sharp.
\end{abstract}

\maketitle
\setcounter{tocdepth}{1} 
\tableofcontents

\section{Introduction}

Let $\Omega\subset\mathbb R^n$ be a connected domain with compact boundary. For $x\in\Omega$, harmonic measure $\omega_\Omega^x$ is the measure representing solutions of the Dirichlet problem for the Laplace equation. More precisely, given $f\in C(\partial\Omega)$, the solution of the Dirichlet problem with boundary data $f$ is the function $u\in C^2(\Omega)\cap C(\overline
\Omega)$ satisfying
\begin{align*}
	 -\Delta u & = 0 \text{ in } \Omega \\
	 u & = f \text{ on } \partial \Omega. 
\end{align*}
In the case when $\Omega$ is unbounded and $n\geq3$ one also asks $\lim_{x\to\infty}u(x)=0$. Then the harmonic measure for $\Omega$ with pole at $x$ is the measure $\omega_\Omega^x$ supported on $\partial\Omega$ such that
\[
u(x)=\int_{\partial\Omega}f(y)\,d\omega_\Omega^x(y)
\]
for every boundary data $f\in C(\partial \Omega)$.

The study of the absolute continuity properties of harmonic measure is an old topic, dating at least as far back as the fundamental work of the Riesz brothers. There, they managed to show that in a Jordan domain $\Omega \subset \C$ with rectifiable boundary, harmonic measure and arc-length are mutually absolutely continuous \cite{RR}. 
Developments in harmonic analysis and geometric measure theory over the following century
 allowed for a rather complete understanding of analogous properties for $(n-1)$-dimensional subsets $F \subset \partial \Omega$ of domains in higher dimensions $\Omega \subset \R^{n}$ for $n \ge 3$. Indeed $(n-1)$-rectifiability of the boundary plays an essential role in qualitative absolute continuity of $\omega_{\Omega}^x$ with respect to $\mathcal{H}^{n-1}|_{\partial \Omega}$ \cite{AHMMMTV16}, and to date, even geometric smoothness properties of a domain $\Omega \subset \R^{n}$ with quantitatively $(n-1)$-dimensional boundary can be \textit{characterized} by absolute continuity properties of $\omega_\Omega^x$ \cite{AHMMT20:geom_lp}. 

What remains far less clear is the behavior of $\omega_{\Omega}^x$ when $\partial \Omega \subset \R^{n}$ has fractional dimension $s \ne n-1$. One of the central questions for such boundaries is to understand the \textit{dimension} of $\omega^x$, where the dimension is defined as 
\begin{align}\label{eqn:dim_measure}
	\dim_{\mathcal{H}} \omega^x \coloneqq \inf \left \{  \dim_{\mathcal{H}} F \; : \; \text{ $F \subset \partial \Omega$ is Borel and } \omega^x(F^c) = 0   \right \},
\end{align}
and $\dim_{\mathcal{H}} F$ is the usual Hausdorff dimension of $F$. While it is true that for any planar domain $\Omega \subset \R^2 \simeq \C$, $\dim_{\mathcal{H}} \omega_\Omega^x \le 1$ \cite{Makarov85:conformal, JW88:dim}, Wolff showed that the analogue fails in higher dimensions; there is $\Omega \subset \R^3$ for which $\dim_{\mathcal{H}} \omega_\Omega^x > 2$ \cite{Wolff95:snowflake}. On the other hand, Bourgain proved that there is a dimensional constant $\epsilon(n) > 0$ so that for any domain $\Omega \subset \R^{n}$, one has that $\dim_{\mathcal{H}} \omega_{\Omega}^x \le n - \epsilon(n)$ \cite{Bourgain87:dim}. It remains a difficult open challenge to compute the sharp value of $\epsilon(n)$ for $n \ge 3$, but explicit bounds have been obtained in, for example \cite{BG24}. 

Closely related to the result of Bourgain is the so-called ``dimension drop'' phenomenon for harmonic measure $\omega_\Omega^x$ outside of fractal sets:
\begin{align}\label{eqn:dim_drop}
	\dim_{\mathcal{H}}\omega_\Omega^x < \dim_{\mathcal{H}} \partial \Omega.
\end{align}
The dimension drop property \eqref{eqn:dim_drop} has now been explored (and proved) in various fractal settings, such as for planar domains with self-conformal structures \cite{Volberg93:cantor, Batakis96:cantor, BZ15:cantor}, or domains with uniformly non-flat boundaries \cite{Azzam20:dim}, and these results led to the prevailing notion that \eqref{eqn:dim_drop} ought to be a typical behavior for most domains $\Omega \subset \R^{n}$ for which $\partial \Omega$ is fractal, e.g., if $\partial \Omega$ is $s$-Ahlfors regular with $s \ne n-1$:
\begin{align*}
	C^{-1} r^s \le \mathcal{H}^s|_{\partial \Omega }(B(x,r)) \le C r^s, \; \; x \in \partial \Omega, \;\; 0 < r < \mathrm{diam}(\partial\Omega).
\end{align*}
However, David, the second author, and Julia~\cite{DJJ23} showed that for every $0<s<0.2497$, there exists an $s$-Ahlfors regular compact set $K\subset\mathbb{R} \subset \C$ for which \eqref{eqn:dim_drop} fails for $\omega_{\C \setminus K}^x$; in fact, the harmonic measure is comparable to $\mathcal H^s|_K$.
On the other hand, the third author~\cite{T24} proved that if $\partial \Omega \subset \R \subset \C$ is $s$-Ahlfors regular with $1/2 \le s < 1$, then necessarily $\dim_{\mathcal{H}} \omega_{\Omega}^x < s = \dim_{\mathcal{H}} \partial \Omega$. Thus, the natural question left by these results is what happens in the
intermediate range
\[
0.2497\leq s<\frac12.
\]
Our main result shows that the threshold $1/2$ is sharp:
\begin{theorem}\label{thm:main}
For every $0<s<1/2$, there exists an $s$-Ahlfors regular compact set
$K\subset\mathbb{R} \subset \C$ such that, if $\omega^\infty$ denotes the harmonic
measure of $\C \setminus K$ with pole at infinity, 
\[
    \dim_{\mathcal H}\omega^\infty=s.
\]
Moreover, $\omega^\infty$ and $\mathcal{H}^s|_K$ are equivalent, i.e., there is some constant $C \ge 1$ for which 
\begin{align*}
	C^{-1} \, \mathcal{H}^s|_K(F)\le \omega^\infty(F) \le C \mathcal{H}^s|_K(F) \text{ for all } F \subset K \text{ Borel.}
\end{align*}
\end{theorem}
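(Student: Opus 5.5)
The plan is to follow the David--Jeznach--Julia strategy, replacing their dyadic-type splitting by an $N$-adic one with $N$ large. The construction is a sequence of nested compact sets $K_0\supset K_1\supset\cdots$ with $K_0=[0,1]$. The set $K_{k+1}$ is obtained from $K_k$ by replacing each interval $I$ of $K_k$ by $N$ subintervals of common length $\lambda|I|$, with $N\lambda^s=1$. The positions of the subintervals inside $I$ are \emph{free parameters}, chosen so that, at the level of a model problem, harmonic measure splits equally among the $N$ children. Then $K=\bigcap_k K_k$ carries the natural measure $\mu$ giving mass $N^{-k}$ to each generation-$k$ interval. If the gaps between consecutive children are comparable to $|I|$ (up to constants depending on $N$, $s$), then $\mu\approx\mathcal H^s|_K$ and $K$ is $s$-Ahlfors regular. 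The whole task is therefore to choose the positions so that $\omega^\infty(J)\approx N^{-k}$ uniformly for every generation-$k$ interval $J$.

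\textbf{Step 1 (exact equidistribution in a model).} Work with the condenser/model problem on one interval. Given $N$ points to be placed in $I$, consider the harmonic measure at infinity of $\C\setminus E$, where $E$ is the union of $N$ intervals of length $\lambda|I|$. I will show via a degree or fixed-point argument (continuity plus Brouwer in the gap parameters, using monotonicity: widening a gap shifts mass away from the neighbors) that there is a configuration where all $N$ intervals receive exactly mass $1/N$. Moreover, in the regime $s<1/2$ the resulting gaps stay comparable to $|I|$, with no child collapsing onto the endpoints. The role of $s<1/2$ is seen here. The density of $\omega^\infty$ for $[0,1]$ behaves like $\mathrm{dist}(x,\partial I)^{-1/2}$, so near the endpoints an interval of length $\lambda$ captures mass $\sim\lambda^{1/2}$. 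This mass must be allowed to equal $N^{-1}=\lambda^s$, i.e. be the smaller of the two, which requires $\lambda^{1/2}\lesssim\lambda^s$, i.e. $s\le 1/2$. Taking $N$ large (so $\lambda$ small) gives room to place the children with gaps comparable to $|I|/N$ rather than degenerate.

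\textbf{Step 2 (self-similar renormalization and error control).} At generation $k$, harmonic measure of an interval $J$ seen from outside is not exactly the model measure, because the rest of $K$ perturbs it. I would write $\omega^\infty(J')/\omega^\infty(J)$ for a child $J'\subset J$ using a change of pole / boundary Harnack comparison. Inside a neighborhood $B_J$ of $J$ of size comparable to $|J|$, the harmonic function is $G(\cdot,\infty)$ vanishing on $K\cap B_J$. Its distribution on $J$'s children should be close to the model $1/N$ up to a multiplicative error $1+O(\epsilon_k)$. Here $\epsilon_k$ comes from (a) the finer structure of $K$ inside the children, and (b) the far-away parts of $K$. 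Point (b) is handled by the $\sqrt{\ }$-type boundary behavior near an interval, which is \emph{stable} under far perturbations. Point (a) is handled by choosing the positions inductively against the actual finer sets (or adjusting them in a second fixed-point argument on infinitely many parameters, via compactness). The key estimate is that $\prod_k(1+O(\epsilon_k))$ is bounded, so I need the errors summable or exactly compensated. I would rather arrange exact compensation: choose all parameters simultaneously via a limiting argument in which, at each finite stage $n$, the positions are chosen so that $\omega^\infty_{\C\setminus K_n}$ gives exactly $N^{-k}$ to each generation-$k$ interval for $k\le n$. The uniform gap bounds from Step 1 then keep the configurations in a compact family, so one can pass to the limit $n\to\infty$ using weak convergence of harmonic measure under Hausdorff convergence of uniformly perfect sets.

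\textbf{Step 3 (conclusion).} In the limit, $\omega^\infty(J)=N^{-k}=\mu(J)$ for all generation-$k$ intervals (with boundary mass controlled since $\mu$ is nonatomic). Hence $\omega^\infty=\mu$. Ahlfors regularity gives $\mu\approx\mathcal H^s|_K$, and therefore $\dim\omega^\infty=s$.

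The main obstacle is Step 1/2: proving that the balancing configuration exists at every stage with gaps uniformly comparable to the parent length. This means ruling out degeneration when the surrounding set is fixed and fractal, not a single interval. I would attack it first with explicit harmonic measure density estimates near endpoints of intervals, exploiting $s<1/2$ and $N$ large, and then apply the Brouwer argument on a compact convex set of admissible gap vectors whose boundary maps outward.
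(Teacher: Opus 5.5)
The exactly balanced configurations you need from Step~1, and from the finite-stage version of Step~2, do not exist, so no Brouwer or degree argument can produce them. In Step~2 you choose positions so that $\omega^\infty_{\C\setminus K_n}$ gives exactly $N^{-k}$ to every generation-$k$ interval, $k\le n$. Here is why this is impossible.

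\emph{Reflection argument.} Let $J_1$ be the leftmost component of your $K_n$ and $J_2$ the next one. Both have length $\lambda^n$, so the reflection $\sigma$ in the vertical line through the midpoint of the gap between them maps $J_1$ onto $J_2$. The rest, $E'=K_n\setminus(J_1\cup J_2)$, lies to the right of $J_2$. Let $G$ be the Green function of $\C\setminus K_n$ with pole at $\infty$. Then $G-G\circ\sigma$ is harmonic in the left half-plane minus $J_1\cup\sigma(E')$. It vanishes on the line and on $J_1$, is positive on $\sigma(E')$, and tends to $0$ at $\infty$, so it is positive. The Hopf lemma then shows that the density of $\omega^\infty$ at $x\in J_1$ strictly exceeds its density at $\sigma(x)\in J_2$. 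Hence $\omega^\infty(J_1)>\omega^\infty(J_2)$ for every choice of positions. The same argument rules out the single-level model of Step~1 as soon as $N\ge3$.

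\emph{Why this happens.} The mechanism is visible at leading order. For well-separated clusters of equal mass, constancy of the potential reads, up to the exterior field,
$\log\mathrm{cap}(K\cap J_i)+\sum_{j\ne i}\log|x_i-x_j|\approx\mathrm{const}$.
The sibling sum is never constant in $i$: for $x_1<\dots<x_N$ one has $b_1-b_2=\sum_{j\ge3}\log\frac{x_j-x_1}{x_j-x_2}>0$, and for Chebyshev nodes it varies by about $\log N$. The paper compensates by giving sibling clusters different sizes, $\log a_{n-1}(p)=\frac{N}{N-1}(\eta-\beta_{j(p)}-e_{n-1}(p))$, so that $1\le a_{n-1}(p)\lesssim N\sin\theta_{j(p)}$. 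At the finest level of your equal-length solid-interval model this freedom is absent. Since the exact finite-stage configurations do not exist, the compactness argument of Step~2 has nothing to pass to the limit.

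\emph{The deferred difficulty.} By your own account, the main obstacle is producing balanced configurations with uniformly non-degenerate geometry at all generations. Nothing in the proposal controls how errors propagate through infinitely many generations, or how the influence of the rest of the set on a cluster is removed. That influence is not a small perturbation in the relevant range. In the paper's normalization, the exterior field produces a linear profile $\Lambda_p q_k$ across the children of size up to $N\tau=N^3r$, which is unbounded for $s>1/3$. It also produces degree-$m$ profiles of size up to $N\tau^m$.

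\emph{What the paper does instead.} It never asks for exactness.
\begin{enumerate}
\item It works with point sets and proves $g_n(x)=c_n+N^{-n}(\beta_{j(x)}+e_n(x))$ with $|e_n|\le\eta$, re-absorbing the parent's error into $a_{n-1}(p)$ at every generation.
\item It cancels all profiles of degree $<M$ by deforming the Chebyshev nodes with $Q_p=\mathrm{Id}+P_{2,p}+\dots+P_{M,p}$. This uses two facts: $L\colon\mathcal P_{m+1}/\R\to\mathcal P_m$ is invertible with inverse of size $\lesssim_M N^{-1}$, and the remainder $R_{m,p}$ depends only on the corrections $P_{\nu+1,z}$ with $\nu\le m-1$.
\item What remains is $O(N\tau^M)\le\eta$ once $M>s/(1-2s)$. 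The field bounds $|\Lambda_{n-1,d}|\lesssim N\tau^d$ come from a separate recursion.
\item Since the oscillation of $g_n$ on $K_n$ is $O(N^{-n})$, the limit measure has constant logarithmic potential on $K$ and therefore equals $\omega^\infty$.
\end{enumerate}

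Your heuristic for the threshold is the right one: the extreme Chebyshev gap $\sim N^{-2}$ must dominate the child scale, which is exactly $\tau=N^2r\to0$. It becomes a proof only through a quantitative perturbative scheme of this kind.
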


Theorem \ref{thm:main} along with the work of the third author \cite{T24} then gives a complete description of the dimension drop phenomenon for harmonic measure for $s$-Ahlfors regular \textit{flat} boundaries $\partial \Omega \subset \R \subset \C$: dimension drop always happens if $1/2 \le s < 1$, but may fail if $0 < s < 1/2$. In the companion paper \cite{CJT:preprint}, we explore the non-flat case (when $\partial \Omega$ is not necessarily contained in the real line), and show instead that the threshold is at $s = 1$: for each $ 0 < s < 1$, there exists $\Omega \subset \C$ so that $\partial \Omega$ is $s$-Ahlfors regular, and so that $\dim_{\mathcal{H}} \omega_\Omega^x = s$.



\subsection{Idea of the construction}

First, it is worthwhile to give an overview of the main construction and how it differs from those sets produced in \cite{DJJ23}. Like the setting there, our compact set $K \subset \R$ is a limit of a sequence of discrete sets $K_n \subset \R$, but our construction is $N$-adic: the set $K_{n}$ is produced from $K_{n-1}$ by replacing each point $p \in K_{n-1}$ by $N$ children, $x_1(p), \dotsc, x_N(p)$, which are well-spaced near the ball $B(p, r^{n-1})$, where $r \in (0,1)$ is chosen so that 
\begin{align*}
	Nr^s =1.
\end{align*}
The relationship above is the one which guarantees that our limiting set $K$ be $s$-Ahlfors regular. 
In \cite{DJJ23} only two points are used (i.e., $N=2$), but using an $N$-adic construction gives us much more flexibility.

 The main goal is to carefully choose the positions of these children $x_1(p), \dotsc, x_N(p)$ simultaneously over all $p \in K_{n-1}$ in such a way that the harmonic measure outside the limiting set $K$ is quantitatively controlled. Since the harmonic measure with pole at infinity in $\C \setminus K$ is the equilibrium measure $\mu_K$ for $K$, we do this in practice by choosing the positions of the $x_k(p)$ so that the discrete logarithmic potential on $K_n$,
\begin{align}\label{eqn:discrete_potential}
	g_{n}(x) = N^{-n} \sum_{q \in K_n, q  \ne x} \log  \abs{x-q} ,
\end{align}
is approximately constant on $K_n$. 

Our model case is a small perturbation of a self-similar construction, where $p \in K_{n-1}$ is replaced by scaled \textit{Chebyshev nodes} centered at $p$; that is, $p$ is replaced by its $N$ children defined by
\begin{align}\label{eqn:model1}
	x_k = p + a_{n-1}(p) r^{n-1} q_k, \qquad 1 \le k \le N,
\end{align}
where the $q_k$ are the zeros of the $N$-th Chebyshev polynomial (which we call Chebyshev nodes)
\begin{align*}
	q_k = \cos \left(  \frac{   (2k-1) \pi  }{2 N }  \right), \qquad 1 \le k \le N, 
\end{align*}
and $a_{n-1}(p) \ge 1$ is a scaling factor. 
The significance of these nodes is the weak convergence of the probability measures
\begin{align*}
	N^{-1} \sum_{k=1}^N \delta_{q_k} \rightharpoonup  \mu_I \equiv  \omega_{\C \setminus [-1,1] }^\infty,
\end{align*}
where $\mu_I$ is the equilibrium measure for $I=[-1,1] \subset \C$, $d\mu(x) = c(1-x^2)^{-1/2} \; dx|_{[-1,1]}$. More concretely, these nodes are chosen so that the discrete potential
\begin{align*}
	q_k \rightarrow \sum_{1 \le j \le N, j \ne k} \log \left( \abs{q_k - q_j}  \right)
\end{align*}
is almost constant in $q_k$. Notice that in the case that $N =2$ (i.e., the setting of \cite{DJJ23}) this is automatically satisfied by any relative position of two nodes, $q_1$ and $q_2$.

When one chooses $a_{n-1}(p) \equiv 1$ in \eqref{eqn:model1}, the resulting set $K$ is self-similar, and it is known that dimension of harmonic measure drops, i.e., $\dim \omega_{\C \setminus K }^\infty < s$. The main observation of \cite{DJJ23} is that one may inductively choose the ratios $a_{n-1}(p) \simeq 1$ in such a way as to force the discrete potential \eqref{eqn:discrete_potential} to have smaller oscillations as $n$ increases, and eventually conclude that $\mu_K$, the equilibrium measure for the limiting set $K$, is equivalent to $\mathcal{H}^s|_K$. With the same philosophy but with $N \gg 2$ points, we can produce the examples given by Theorem \ref{thm:main} for all $s < 1/3$, which improves upon the range previously known. See Theorem \ref{thm:one-third}.

\vspace{0.5em}

To obtain the examples for arbitrary $s < 1/2$, choosing the scaling parameter $a_{n-1}(p)$ alone does not suffice, and additional small deformations are needed. It turns out that one can improve the errors on the oscillation of $g_n(x)$ provided that the Chebyshev nodes locations are perturbed by small polynomials; that is, if instead the children $x_k$ of $p \in K_{n-1}$ are defined by 
\begin{align}\label{eqn:model2}
	x_k = p + a_{n-1}(p) r^{n-1} Q_p(q_k), \qquad 1 \le k \le N, 
\end{align}
where now each $Q_p$ is a polynomial (with possible high degree $M$) depending on $p$ and is a small perturbation of the identity. A delicate analysis of how the oscillations of $g_{n}(x_k)$ change from $g_{n-1}(p)$ to $g_{n}(x_k)$ shows that the polynomial deformation $Q_p$ as in \eqref{eqn:model2} influences this change (up to a good, high order error) via a sum of polynomials of two types:
\begin{align*}
	\sum_{m=1}^{M-1} (LP_{m+1,p}(q_k) + R_{m,p}(q_k)), 
\end{align*}
where $Q_p(t) = t + P_{2,p}(t) + \cdots + P_{M,p}(t)$, $R_{m,p}(t)$ are remainder polynomials, and $L$ is an invertible operator 
between suitable spaces of polynomials that lowers the degree by one. The key point will be that the remainders $R_{m,p}(t)$ depend on the deformation polynomials $P_{2,p}(t), \dotsc, P_{m,p}(t)$ in a triangular way, and thus this error above can be canceled entirely, leaving only a good, high order error. For more details, see Section \ref{sec:poly_corrections}

\subsection{Notation and conventions}

All of our discrete sets $K_n$ are constructed inductively, where $K_0 = \{0\}$ each $p \in K_{n-1}$ is replaced by $N$ \textit{children} $x_1, \dotsc, x_N$ associated to the Chebyshev nodes $q_1, \dotsc, q_N$ of the form 
\begin{align*}
	x_j = p + a_{n-1}(p)r^{n-1}Q_p(q_j), \; \; 1 \le j \le N,
\end{align*}
in order to form $K_n$. In this case, $p$ is said to be the \textit{parent} of each $x_j$, and we often denote this relationship by $\hat{x}_j = p$. If $x,y \in K_n$ and $\hat{x} = \hat{y}$, then $x$ and $y$ are called \textit{siblings}. In addition, given some point $x \in K_n$, then it has a parent $\hat{x} = p \in K_{n-1}$, and thus there exists some index $j = j(x)$ with $1 \le j \le N$ called the \textit{index} of $x$ so that 
 \begin{align*}
 	x = p + a_{n-1}(p)r^{n-1}Q_{p}(q_j).
 \end{align*}
 Thus the index of $x$ indicates its relative position to its parent $p$. 
  
 It shall also be useful when discussing distinct points $x,y \in K_n$ to distinguish points by their \textit{nearest common ancestor}, which is as follows. Fix $x \in K_n$, and write $\mathcal{Y}_1(x) = \{ y \in K_n \; : \; y \ne x, \hat{x} = \hat{y} \}$ for the siblings of $x$ in $K_n$. For $\ell \ge 2$, we define $\mathcal{Y}_\ell(x)$ to be the set of points in $y \in K_n$ such that the nearest common ancestor of $y$ and $x$ is $\ell$ generations above $x$. More precisely, if we write $P^{(k)}(\cdot )$ for the parent operator $\hat{(\cdot)}$ composed with itself $k$ times, then we have that for $x \in K_n$,
 \begin{align*}
 	\mathcal{Y}_\ell(x) \coloneq \{   y \in K_n  \; : \; P^{(\ell)}(x) = P^{(\ell)}(y),  \;  P^{(\ell-1)}(x) \ne  P^{(\ell-1)}(y) \}, \; 1 \le \ell \le n
 \end{align*}
 with the understanding the $P^{(0)}(y) = y$. If $y \in \mathcal{Y}_{\ell}(x)$, then $P^{(\ell)}(y) = P^{(\ell)}(x) \in K_{n-\ell} $ is said to be the nearest common ancestor of $x$ and $y$. We have the following decomposition of $K_n \setminus \{x\}$ based on the nearest common ancestor: for each $x \in K_n$,
 \begin{align}
 	K_n  = \{x\} \cup  \left( \bigcup_{\ell = 1 }^n \mathcal{Y}_\ell(x)\right),
 \end{align}
 and the union is disjoint. It shall prove useful later that (in all of our constructions), we have that $\abs{x-y} \simeq r^{n-\ell}$ for $x \in K_n$, $y \in \mathcal{Y}_\ell(x)$, and $1 \le \ell \le n$ (see Lemma \ref{lem:ahlf_reg}).

\subsection{Organization of the paper}

The paper is organized as follows.

Section~\ref{sec:chebyshev} collects several identities and estimates related to the Chebyshev nodes $q_k$ which will be used throughout the paper.

Section~\ref{sec:basic-construction} presents the basic $N$-adic
construction for the model case \eqref{eqn:model1} and proves the main result for the range $0 < s < 1/3$; see Theorem \ref{thm:one-third}.

Section~\ref{sec:first-corrections} introduces the first polynomial
corrections and shows that introducing quadratic (resp. cubic) deformations to the Chebyshev nodes leads to the thresholds $s < 2/5$ (resp. $s < 3/7$). We include these estimates because we feel these computations help clarify how the polynomial deformations appear later in our general construction for $s < 1/2$.

Section~\ref{sec:poly_corrections} introduces the polynomial correction
operator $L$, and shows how small polynomial perturbations of the Chebyshev nodes affects the oscillation of the potential $g_n(x)$. The main result in this section is Lemma \ref{lem:higher-basic-error}.

Section~\ref{sec:higher-potential} is a stand-alone section which includes basic estimates on discrete potentials, which are needed in the main construction.

Finally, Section~\ref{sec:finite-renormalization} combines the estimates of Sections~\ref{sec:poly_corrections} and \ref{sec:higher-potential} to provide our main construction, and prove our main Theorem \ref{thm:main}; see Lemma \ref{lem:finite-order-induction} and its consequence, Theorem \ref{thm:main-general-new}.

\subsection{AI Disclosure} The authors used the LLM GPT-5.6 in the development of this work to explore mathematical ideas and develop proofs. In particular, it was the idea of the authors to approximate the equilibrium measure of a segment with point masses, and Theorem \ref{thm:one-third} was known to the authors without the aid of AI. Via a thorough interplay of prompting this LLM, the authors were able to upgrade this result to the full range of dimensions $0 < s< 1/2$. The authors have read, checked, and take full responsibility for all of the proofs in this paper. Moreover, the authors have significantly modified some of the arguments provided by the LLM.

\section{Chebyshev nodes and preliminary estimates}
\label{sec:chebyshev}

We begin by collecting basic facts about the Chebyshev nodes that
will be used throughout the construction. The $N$-th Chebyshev
polynomial of the first kind, $T_N$, is characterized by
\begin{equation}\label{eq:def-TN}
    T_N(\cos\theta)=\cos(N\theta).
\end{equation}
$T_N$ is a polynomial of degree $N$, leading coefficient $2^{N-1}$, and zeros
\[
    q_j=\cos\theta_j,
    \qquad
    \theta_j=\frac{(2j-1)\pi}{2N},
    \qquad
    1\leq j\leq N.
\]
Consequently, $T_N$ can be written as 
\[
    T_N(x)=2^{N-1}\prod_{j=1}^N(x-q_j).
\]
For later use, we define the quantities
\begin{align}\label{eqn:bj}
 b_j=\sum_{k\neq j}\log|q_j-q_k|,
\qquad
b_*=\max_{1\leq j\leq N} b_j,
\qquad
\beta_j=b_j-b_*.
\end{align}

\begin{lemma}\label{lem:cheb0}
For each $N \geq2$, we have 
\[
 \sum_{j=1}^Nq_j=0\quad \text{ and } \quad
 \frac1N\sum_{j=1}^Nq_j^2=\frac12.
\]
\end{lemma}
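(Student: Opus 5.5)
The plan is to work directly with the parametrization $q_j=\cos\theta_j$, $\theta_j=(2j-1)\pi/(2N)$, and reduce both sums to sums of roots of unity. Alternatively, the first identity can be read off from Vieta's formulas applied to $T_N$. I would present the root-of-unity argument, since it handles both sums uniformly.

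For the first identity, write
\[
\sum_{j=1}^N q_j=\mathrm{Re}\sum_{j=1}^N e^{i\theta_j}=\mathrm{Re}\Big(e^{i\pi/(2N)}\sum_{j=0}^{N-1}\zeta^{j}\Big),\qquad \zeta=e^{i\pi/N}.
\]
Since $\zeta\neq1$ for $N\ge2$ and $\zeta^N=-1$, the geometric sum equals $2/(1-\zeta)$. Hence the sum is
\[
\mathrm{Re}\frac{2e^{i\pi/(2N)}}{1-e^{i\pi/N}}=\mathrm{Re}\frac{2}{e^{-i\pi/(2N)}-e^{i\pi/(2N)}}=\mathrm{Re}\frac{2}{-2i\sin(\pi/(2N))}=0,
\]
because the last quantity is purely imaginary. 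A cleaner alternative is the symmetry $q_{N+1-j}=\cos(\pi-\theta_j)=-q_j$, which pairs the nodes off and gives the sum $0$ immediately. I would use this symmetry in the write-up.

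For the second identity, use $q_j^2=\tfrac12(1+\cos 2\theta_j)$. This gives
\[
\frac1N\sum_j q_j^2=\frac12+\frac1{2N}\sum_j\cos\big((2j-1)\pi/N\big).
\]
The remaining sum is $\mathrm{Re}\big(e^{i\pi/N}\sum_{j=0}^{N-1}\omega^j\big)$ with $\omega=e^{2\pi i/N}$. This vanishes for $N\ge2$, since $\omega\neq1$ is an $N$-th root of unity.

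There is no real obstacle here. The only point needing care is the hypothesis $N\ge2$, which guarantees $\zeta\ne1$ and $\omega\neq1$; for $N=1$ the second identity fails, since $q_1=0$. As a cross-check, $T_N(x)=2^{N-1}x^N-2^{N-3}N x^{N-2}+\dots$ for $N\ge2$. Newton's identities then give $\sum q_j=0$ and $\sum q_j^2=(\sum q_j)^2-2e_2=0+N/2$, which agrees with the computation above.
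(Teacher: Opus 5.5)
Your proposal is correct and follows essentially the same route as the paper: the symmetry $q_{N+1-j}=-q_j$ for the first identity, and $\cos^2\theta=\tfrac12(1+\cos 2\theta)$ together with the vanishing of $\sum_j\cos 2\theta_j$ over $N$ equally spaced angles for the second. Your explicit root-of-unity computation and the Newton-identity cross-check via $T_N(x)=2^{N-1}x^N-2^{N-3}Nx^{N-2}+\cdots$ are both accurate. You also correctly identify $N\ge 2$ as the only hypothesis that matters.
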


\begin{proof}
The fact that $\sum_{j=1}^Nq_j=0$ follows because the set of zeros is symmetric about the origin.
To show the second identity, we use that
$q_j=\cos\theta_j$ and
\[
 \cos^2\theta=\frac{1+\cos2\theta}{2},
\]
while the equally spaced angles $2\theta_j$ in $[0,2\pi)$ make the sum of $\cos2\theta_j$ vanish.
\end{proof}

An important identity satisfied by $T_N$ is the following differential equation, which follows differentiating \eqref{eq:def-TN} easily:
\begin{equation}\label{eqdiff}
 (1-x^2)T_N''(x)-xT_N'(x)+N^2T_N(x)=0.
\end{equation}
The $(N-1)$-th Chebyshev polynomial of the second kind is defined by
\[
 U_{N-1}(\cos\theta)=\frac{\sin(N\theta)}{\sin\theta}.
\]
Differentiating $T_N(\cos\theta)=\cos(N\theta)$ gives
\[
 T_N'(x)=NU_{N-1}(x).
\]
At a zero $q_j=\cos\theta_j$,
\[
 |T_N'(q_j)|=\frac{N}{\sin\theta_j}.
\]
Combining this with the product factorization yields for each $1 \le j \le N$,
\begin{equation}\label{eqTX5}
 \prod_{k\ne j}|q_j-q_k|
 =\frac{N}{2^{N-1}\sin\theta_j}.
\end{equation}

\begin{lemma}[Basic identities and estimates]\label{lem:cheb}
For $1\le j\le N$,
\begin{equation}\label{eqTX20}
 b_j=\log N-(N-1)\log2-\log\sin\theta_j,
 \qquad \beta_j=\log\frac{\sin\theta_1}{\sin\theta_j}\le0.
\end{equation}
Moreover,
\begin{equation}\label{eqTX21}
 \sum_{k\ne j}\frac1{q_j-q_k}=\frac{q_j}{2(1-q_j^2)},
\end{equation}
\begin{equation}\label{eqTX22}
 \sum_{k\ne j}\frac1{|q_j-q_k|^2}\lesssim\frac{N^2}{\sin^2\theta_j},
\end{equation}
and
\begin{equation}\label{eqTX23}
 d_N:=\min_{j\ne k}|q_j-q_k|\simeq N^{-2}, \qquad \abs{ q_j - q_{j+1}  } \gtrsim \sin(\theta_j)/ N \simeq \sin(\theta_{j+1})/N.
\end{equation}
The implicit constants are absolute.
\end{lemma}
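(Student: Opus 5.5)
The four claims follow directly from \eqref{eqTX5}, the factorization $T_N(x)=2^{N-1}\prod_k(x-q_k)$, the differential equation \eqref{eqdiff}, and the trigonometric parametrization $q_j=\cos\theta_j$.

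\textbf{The identity \eqref{eqTX20}.} Take logarithms in \eqref{eqTX5}:
\[
b_j=\log N-(N-1)\log 2-\log\sin\theta_j .
\]
Since $b_j$ is a decreasing function of $\sin\theta_j$, the maximum $b_*$ is attained where $\sin\theta_j$ is smallest. Because $\theta_j\in(0,\pi)$ is symmetric about $\pi/2$, this happens at $j=1$ (equivalently $j=N$). Hence
\[
\beta_j=\log\frac{\sin\theta_1}{\sin\theta_j}\le 0 .
\]

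\textbf{The identity \eqref{eqTX21}.} Write $T_N(x)=(x-q_j)S(x)$. Then $T_N'(q_j)=S(q_j)$ and $T_N''(q_j)=2S'(q_j)$, so
\[
\sum_{k\ne j}\frac{1}{q_j-q_k}=\frac{S'(q_j)}{S(q_j)}=\frac{T_N''(q_j)}{2T_N'(q_j)} .
\]
Evaluating \eqref{eqdiff} at the zero $q_j$ gives $(1-q_j^2)T_N''(q_j)=q_jT_N'(q_j)$. Substituting yields $\frac{q_j}{2(1-q_j^2)}$.

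\textbf{The spacing estimates \eqref{eqTX23}.} Use
\[
|q_j-q_k|=2\left|\sin\tfrac{\theta_j+\theta_k}{2}\right|\left|\sin\tfrac{\theta_j-\theta_k}{2}\right| .
\]
For $k=j+1$ we have $\theta_{j+1}-\theta_j=\pi/N$, so the second factor is $\simeq 1/N$. The first factor is $\sin$ of the midpoint angle, and it is $\gtrsim \sin\theta_j$: the angles are at least $\pi/(2N)$ from $0$ and $\pi$, so midpoints and neighbours have comparable sines. The same reasoning gives $\sin\theta_j\simeq\sin\theta_{j+1}$. The minimum gap occurs at the endpoints, where $\sin\theta_1\simeq 1/N$, so $d_N\simeq N^{-2}$. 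The upper bound $d_N\lesssim N^{-2}$ comes from the explicit gap $q_1-q_2$.

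\textbf{The estimate \eqref{eqTX22}; this is the main obstacle.} Put $m=|j-k|$, so $|\theta_j-\theta_k|=m\pi/N$ and $\left|\sin\tfrac{\theta_j-\theta_k}{2}\right|\simeq m/N$ (since $m\pi/(2N)\le\pi/2$).

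The first factor needs a lower bound. I would use $\sin\tfrac{\theta_j+\theta_k}{2}\gtrsim \sin\theta_j$ whenever $\theta_k$ lies on the side of $\theta_j$ away from the nearer endpoint, or more generally when $m$ is small relative to $N\sin\theta_j$. The bad case is when $\theta_k$ moves toward the endpoint. There the midpoint angle is still at least half of $\min(\theta_j,\pi-\theta_j)$, so its sine is $\gtrsim\sin\theta_j$ by concavity. This gives
\[
|q_j-q_k|\gtrsim \sin\theta_j\cdot\frac{m}{N},
\]
and therefore
\[
\sum_{k\ne j}|q_j-q_k|^{-2}\lesssim \frac{N^2}{\sin^2\theta_j}\sum_{m\ge1} m^{-2}\lesssim\frac{N^2}{\sin^2\theta_j}.
\]
The only delicate point is checking the concavity argument uniformly, including near $\theta=\pi/2$.

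All implicit constants obtained this way are absolute.
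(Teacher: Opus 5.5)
Your proposal is correct. For \eqref{eqTX20}, \eqref{eqTX21} and the explicit gap $q_1-q_2$ it is the paper's argument. The genuine difference is \eqref{eqTX22}. The paper evaluates the sum exactly: it expresses $\sum_{k\ne j}(q_j-q_k)^{-2}$ through $T_N''/T_N'$ and $T_N'''/T_N'$ at $q_j$, then differentiates the Chebyshev equation to get
\[
\sum_{k\ne j}\frac1{(q_j-q_k)^2}=\frac{N^2-1}{3(1-q_j^2)}-\frac{3q_j^2}{4(1-q_j^2)^2}.
\]
You instead bound each term through $|q_j-q_k|=2\sin\frac{\theta_j+\theta_k}{2}\,\bigl|\sin\frac{\theta_j-\theta_k}{2}\bigr|$ and sum $\sum_k|j-k|^{-2}$. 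This carries a harmless extra factor $2$, since at most two values of $k$ share each $m$.

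The paper's route stays inside the ODE calculus already used for \eqref{eqTX21} and gives an exact formula with the sharp constant. Yours is cruder but more robust: it is precisely the argument the paper uses for Lemma~\ref{lem:weighted}, which needs higher exponents, weights $\sin^a\theta_\ell$, and bi-Lipschitz images of the nodes. It also directly yields the neighbour bound $|q_j-q_{j+1}|\gtrsim\sin\theta_j/N$ and $d_N\gtrsim N^{-2}$. The paper's proof of this lemma handles those only by observing that consecutive gaps are smallest at the endpoints.

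The point you flag as delicate needs no case analysis. For every $k$, the midpoint $\mu=(\theta_j+\theta_k)/2$ satisfies $\min(\mu,\pi-\mu)\ge\frac12\min(\theta_j,\pi-\theta_j)$. Hence $\sin\mu\ge\sin\bigl(\tfrac12\min(\theta_j,\pi-\theta_j)\bigr)\ge\frac12\sin\theta_j$, using $\sin x=2\sin\frac x2\cos\frac x2$. This holds uniformly, including near $\pi/2$.
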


\begin{proof}
The first identity follows immediately from \eqref{eqTX5}:
$$b_j=\sum_{k\ne j}\log|q_j-q_k|=\log N-(N-1)\log2-\log\sin\theta_j.$$
This implies that
$$\beta_j=b_j-b_* =  b_j -\max_kb_k = b_j - b_1 = \log\frac{\sin\theta_1}{\sin\theta_j}\le0.
$$ 

To deduce the identity involving the reciprocal sums, notice that,
for a polynomial $P(x)=c\prod_k(x-x_k)$ with simple zeros,
\[
 \frac{P''(x_j)}{2P'(x_j)}=\sum_{k\ne j}\frac1{x_j-x_k}.
\]
Applying this to $T_N$ and using the differential equation \eqref{eqdiff} at $q_j$ gives
\begin{equation}\label{eqTX6}
 \sum_{k\ne j}\frac1{q_j-q_k} = \frac{T_N''(q_j)}{2\,T_N'(q_j)}
 =\frac{q_j}{2(1-q_j^2)}.
\end{equation}

To prove the inequality about the squared reciprocal terms,
write $T_N(x)=(x-q_j)Q_j(x)$. Then
\[
 \frac{Q_j'(q_j)}{Q_j(q_j)}=\sum_{k\ne j}\frac1{q_j-q_k},
 \qquad
 \left(\frac{Q_j'}{Q_j}\right)'(q_j)=-\sum_{k\ne j}\frac1{(q_j-q_k)^2}.
\]
Since $T_N'(q_j)=Q_j(q_j)$, $T_N''(q_j)=2Q_j'(q_j)$, and the third derivative satisfies $T_N'''(q_j)=3Q_j''(q_j)$,
\[
 \sum_{k\ne j}\frac1{(q_j-q_k)^2} = -\frac{Q_j''(q_j)}{Q_j(q_j)} + \frac{Q_j'(q_j)^2}{Q_j(q_j)^2}
 =\left(\frac{T_N''(q_j)}{2T_N'(q_j)}\right)^2-\frac{T_N'''(q_j)}{3T_N'(q_j)}.
\]
Differentiating the Chebyshev differential equation gives
\[
 (1-x^2)T_N'''(x)-3xT_N''(x)+(N^2-1)T_N'(x)=0.
\]
At $x=q_j$ and using also \eqref{eqTX6}, this yields
\[
 \frac{T_N'''(q_j)}{T_N'(q_j)}= \frac{3q_jT_N''(q_j)}{(1-q_j^2)T_N'(q_j)}-\frac{N^2-1}{1-q_j^2}
 =\frac{3q_j^2}{(1-q_j^2)^2}-\frac{N^2-1}{1-q_j^2}.
\]
Therefore
\[
 \sum_{k\ne j}\frac1{(q_j-q_k)^2}=
 \biggl(\frac{q_j}{2(1-q_j^2)}\biggr)^2 - 
 \frac13\biggl(\frac{3q_j^2}{(1-q_j^2)^2}-\frac{N^2-1}{1-q_j^2}\biggr)=
 \frac{N^2-1}{3(1-q_j^2)}-\frac{3q_j^2}{4(1-q_j^2)^2}.
\]
In particular,
\[
 \sum_{k\ne j}\frac1{|q_j-q_k|^2}
 \lesssim\frac{N^2}{\sin^2\theta_j}.
\]

To prove that $d_N\simeq N^{-2}$, 
observe that the gaps between consecutive Chebyshev nodes are smallest at the endpoints. Hence,
\[
 \min_{j\ne k}|q_j-q_k| = |q_1-q_2|
 =\cos\frac{\pi}{2N}-\cos\frac{3\pi}{2N}
 \simeq N^{-2}.
\]
\end{proof}

\begin{lemma}[Weighted reciprocal sums]\label{lem:weighted}
For every integer $m\ge2$ and every $0\le a\le m$,
\[
 \sum_{\ell\ne j}\frac{\sin^a\theta_\ell}{|q_j-q_\ell|^m}
 \lesssim_m \frac{N^m}{\sin^{m-a}\theta_j}.
\]
The same estimate holds, up to a constant depending on $m$ and the bi-Lipschitz constant, if $q_j-q_\ell$ is replaced by $Q(q_j)-Q(q_\ell)$ for a uniformly bi-Lipschitz map $Q$ on $[-1,1]$.
\end{lemma}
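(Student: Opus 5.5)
We reduce the sum to a comparison with an integral in the angular variable, using two elementary facts about Chebyshev nodes. The first is the lower bound on node separation,
\[
|q_j-q_\ell| = |\cos\theta_j-\cos\theta_\ell| = 2\,\Bigl|\sin\tfrac{\theta_j-\theta_\ell}{2}\Bigr|\,\sin\tfrac{\theta_j+\theta_\ell}{2} \gtrsim |\theta_j-\theta_\ell|\,\bigl(\sin\theta_j+\sin\theta_\ell\bigr).
\]
This holds because $\sin\frac{\theta_j+\theta_\ell}{2}\gtrsim \sin\theta_j+\sin\theta_\ell$ for $\theta_j,\theta_\ell\in(0,\pi)$: the half-sum angle is at least as far from $\{0,\pi\}$, up to a factor $1/2$, as the farther of the two angles. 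The second is that $|\theta_j-\theta_\ell| = |j-\ell|\pi/N$. Writing $k=|j-\ell|\ge1$, each term is therefore bounded by
\[
\frac{\sin^a\theta_\ell}{|q_j-q_\ell|^m}\lesssim_m \frac{N^m}{k^m}\,\frac{\sin^a\theta_\ell}{(\sin\theta_j+\sin\theta_\ell)^m}\le \frac{N^m}{k^m}\,(\sin\theta_j+\sin\theta_\ell)^{a-m},
\]
where the last step uses $\sin\theta_\ell\le \sin\theta_j+\sin\theta_\ell$ and $a\ge0$. Since $a\le m$, the exponent $a-m$ is nonpositive, so $(\sin\theta_j+\sin\theta_\ell)^{a-m}\le \sin^{a-m}\theta_j$.

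It follows that
\[
\sum_{\ell\ne j}\frac{\sin^a\theta_\ell}{|q_j-q_\ell|^m}\lesssim_m \frac{N^m}{\sin^{m-a}\theta_j}\sum_{k\ge1}2k^{-m}\lesssim_m\frac{N^m}{\sin^{m-a}\theta_j},
\]
since $m\ge2$ makes $\sum_k k^{-m}$ converge. The factor $2$ accounts for the two values of $\ell$ with a given $|j-\ell|$. This also explains why the constraints $0\le a\le m$ and $m\ge 2$ appear in the statement. It is consistent with \eqref{eqTX22}, which is the case $m=2$, $a=0$.

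The only genuinely delicate step is the inequality $\sin\frac{\theta_j+\theta_\ell}{2}\gtrsim\sin\theta_j+\sin\theta_\ell$. We check it as follows. Let $\phi=\frac{\theta_j+\theta_\ell}{2}\in(0,\pi)$, and set $\operatorname{dist}(\theta,\{0,\pi\})=\min(\theta,\pi-\theta)\simeq\sin\theta$. If both angles lie in $[0,\pi/2]$, then $\phi\ge\max/2$. If both lie in $[\pi/2,\pi]$, the symmetric argument applies. If they lie on opposite sides, $\phi$ falls between them, so $\operatorname{dist}(\phi,\{0,\pi\})\ge\min$ of their distances. In that case we need a bound by the larger one. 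Suppose instead $\phi$ is close to $0$ while $\theta_\ell$ is near $\pi/2$; this is impossible, since then $\theta_j\le 2\phi$ would be small and $\theta_\ell\le 2\phi$ as well. So in all cases $\operatorname{dist}(\phi,\{0,\pi\})\ge\frac12\max\operatorname{dist}$, which is the claim.

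For the bi-Lipschitz variant, $|Q(q_j)-Q(q_\ell)|\ge \Lambda^{-1}|q_j-q_\ell|$. This inserts a factor $\Lambda^m$ and leaves the rest of the argument unchanged.
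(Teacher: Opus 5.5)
Your proposal is correct and follows essentially the paper's route. You factor $|q_j-q_\ell|=2\sin\frac{\theta_j+\theta_\ell}{2}\bigl|\sin\frac{\theta_j-\theta_\ell}{2}\bigr|$, bound the first factor below by a multiple of $\max\{\sin\theta_j,\sin\theta_\ell\}$ and the second by $|j-\ell|/N$, use $0\le a\le m$ to reduce to $\sin^{a-m}\theta_j$, and sum $\sum_k k^{-m}$. Your case analysis for the half-angle inequality is roundabout but valid; it follows in one line from $\phi\ge\frac12\max\{\theta_j,\theta_\ell\}$ and $\pi-\phi\ge\frac12\max\{\pi-\theta_j,\pi-\theta_\ell\}$, whereas the paper uses that $v\mapsto\sin\frac{u+v}{2}$ attains its minimum over $[0,\pi]$ at an endpoint.
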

\begin{proof}
Use
\[
 |q_j-q_\ell|=2\sin\frac{\theta_j+\theta_\ell}{2}
 \left|\sin\frac{\theta_j-\theta_\ell}{2}\right|.
\]
We shall use the symmetric estimate
\[
 \sin\frac{\theta_j+\theta_\ell}{2}
 \ge \frac12\max\{\sin\theta_j,\sin\theta_\ell\}.
\]
Indeed, for fixed $u\in(0,\pi)$, the minimum of
\(\sin((u+v)/2)\) over \(v\in[0,\pi]\) is attained in one of the end-points. Then it follows that
\[
 \sin\frac{u+v}{2}\ge \min\left\{\sin\frac u2,\sin\frac{u+\pi}2\right\}= 
 \min\left\{\sin\frac u2,\cos\frac u2\right\}\geq \sin\frac u2\,\cos\frac u2
 \ge\frac12\sin u,
\]
and the same argument with $u$ and $v$ interchanged gives the claim. 

Since $|\sin\frac x2|\gtrsim |x|$ for $x\in[-\pi,\pi]$, we also have
\[
 \left|\sin\frac{\theta_j-\theta_\ell}{2}\right|\gtrsim\frac{|j-\ell|}{N}.
\]
Thus
$$|q_j-q_\ell| \gtrsim \max\{\sin\theta_j,\sin\theta_\ell\}\,\frac{|j-\ell|}{N},$$
and so
\[
 \frac{\sin^a\theta_\ell}{|q_j-q_\ell|^m}
 \lesssim_m \frac{N^m}{\sin^{m-a}\theta_j}\frac1{|j-\ell|^m}.
\]
Now the result follows by summation.
\end{proof}

The preceding estimates also imply that the sets \(K_n\) have geometric properties similar to those of self-similar Cantor sets, as shown in the following lemma.

\begin{lemma}\label{lem:ahlf_reg}
	Let $ 0 < s < 1/2$, let $N \in \N$, $Nr^s = 1$, set $K_0 = \{0\}$ and define $K_n$ inductively for $n \ge 1$ by 
	\begin{align*}
		K_n = \bigcup_{p \in K_{n-1}} \{   p + a_{n-1}(p) r^{n-1} Q_p(q_j) \; : \; 1 \le j \le N \},
	\end{align*}
	where $a_0(0)=1$ and $1 \le a_{n-1}(p) \le C_1 N \sin(\theta_{j(p)})$ in the case $n\geq2$, and each $Q_p:[-1,1] \rightarrow [-2,2]$ satisfies
	\begin{align*}
		\abs{q_j - q_k} /2 \le \abs{  Q_p(q_j) - Q_p(q_k)  } \le 2 \abs{ q_j -q_k  }, \; \; 1 \le j,k \le N.
	\end{align*}
	Then as long as $N$ is large enough depending on $C_1$ and $s$, then for each $x \in K_n$ and $1 \le \ell \le n$,
	\begin{align}\label{eqn:nca}
		\# \mathcal{Y}_\ell(x) = (N-1)N^{\ell-1}, \qquad  N^{-2} r^{n-\ell}  \lesssim \abs{x-y} \lesssim C_1 N r^{n-\ell} \text{ for all } y \in \mathcal{Y}_\ell(x).
	\end{align}
	Consequently, $K_n \rightarrow K$ in the Hausdorff distance, where $K$ is a compact, $s$-Ahlfors regular set, and 
	\begin{align*}
		\mu_n \coloneqq N^{-n} \sum_{x \in K_n} \delta_{x} \rightharpoonup \mu,
	\end{align*}
	where $\mu$ is an $s$-Ahlfors regular measure supported on $K$. 
\end{lemma}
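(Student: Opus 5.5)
The plan is to prove the counting statement in \eqref{eqn:nca} first, then the distance bounds by induction along the tree, and finally to derive the Hausdorff convergence and Ahlfors regularity from those bounds. The counting is purely combinatorial. Since every point has exactly $N$ children, the set $\{y: P^{(\ell)}(y)=P^{(\ell)}(x)\}$ has $N^\ell$ elements. Removing those with $P^{(\ell-1)}(y)=P^{(\ell-1)}(x)$, of which there are $N^{\ell-1}$, leaves $(N-1)N^{\ell-1}$. Distinctness of points in $K_n$ will follow from the lower distance bound, so we do not need to assume it.

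\textbf{Displacement bound.} For any $y\in K_m$, I would first bound the displacement of its descendants: for $z\in K_{m+k}$ with $P^{(k)}(z)=y$,
\[
\abs{z-y}\le \sum_{i=m}^{m+k-1} 2C_1 N r^{i}\le \frac{2C_1N r^m}{1-r}\le 4C_1 N r^m .
\]
This uses $a_i\le C_1N$ and $\abs{Q_p}\le 2$. Now let $y\in\mathcal Y_\ell(x)$ with common ancestor $p\in K_{n-\ell}$.
\begin{itemize}
\item The upper bound follows at once, since both $x$ and $y$ lie within $4C_1Nr^{n-\ell}$ of $p$.
\item For the lower bound, let $x'=P^{(\ell-1)}(x)$ and $y'=P^{(\ell-1)}(y)$. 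These are distinct siblings in $K_{n-\ell+1}$, so
\[
\abs{x'-y'}\ge a\,r^{n-\ell}\abs{q_j-q_k}/2\ge r^{n-\ell}d_N/2\gtrsim N^{-2}r^{n-\ell},
\]
using $a\ge1$ and Lemma \ref{lem:cheb}. Applying the displacement bound one generation down gives $\abs{x-x'}+\abs{y-y'}\le 8C_1Nr^{n-\ell+1}$.
\end{itemize}

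\textbf{Main obstacle.} The crude displacement bound is not good enough to absorb the last error. We need $8C_1Nr\ll N^{-2}$, that is $r\ll C_1^{-1}N^{-3}$. But $r=N^{-1/s}$ and $1/s>2$, which only gives $N^{-1/s}\ll N^{-3}$ when $s<1/3$. To handle the full range $s<1/2$ I will use the finer hypothesis $a_{n-1}(p)\le C_1N\sin\theta_{j(p)}$, which ties the size of the children's cluster to the local gap around $p$. By \eqref{eqTX23}, the gap between $p$ and a neighbouring sibling is $\gtrsim a_{n-2}(\hat p)\,r^{n-2}\sin\theta_{j(p)}/N$. Meanwhile the descendants of $p$ stay within
\[
\lesssim C_1N\sin\theta_{j(p)}\,r^{n-1}+C_1N r^{n}+\cdots .
\]
So the relative error compared with the local gap is $\lesssim C_1N^2 r$, up to the much smaller tail terms. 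This tends to $0$ when $N^{2-1/s}\to0$, which is exactly where $s<1/2$ is used.

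I would organise this as an inductive claim that each descendant cloud of $y\in K_m$ lies within $\tfrac14$ of the gaps from $y$ to its siblings. The deeper tail terms, carrying factors $N r^{i}$, need to be controlled against the first-level gap $\gtrsim N^{-2}r^{m}$ through the same factor $N^2 r$ iterated. Siblings far from the index-$j(p)$ endpoint have larger gaps, so near $p$ the nearest-neighbour gap is the binding one. This gives $\abs{x-y}\ge \tfrac12\abs{x'-y'}\gtrsim N^{-2}r^{n-\ell}$ once $N$ is large depending on $C_1$ and $s$.

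\textbf{Limit set and regularity.} The displacement bound shows that $K_n$ is Cauchy in the Hausdorff metric, since $d_H(K_n,K_{n+1})\lesssim C_1Nr^n$, so $K_n\to K$ compact. Each limit point is coded by an infinite branch, and each $\mu_n$ assigns mass $N^{-m}$ to the descendants of a fixed $y\in K_m$. These masses are consistent, so $\mu_n\rightharpoonup\mu$, where $\mu$ assigns mass $N^{-m}$ to the cluster $K\cap B(y,4C_1Nr^m)$ of descendants of $y$. For $z\in K$ and $r^{m}\le\rho<r^{m-1}$, the separation estimates imply that $B(z,\rho)$ contains the descendant cluster at level $m+c$ (with $c=c(N,C_1)$ a fixed number of levels) and meets at most $O_{N,C_1}(1)$ clusters of level $m-c$. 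Hence $\mu(B(z,\rho))\simeq N^{-m}=r^{sm}\simeq\rho^s$, with constants depending on $N$ and $C_1$. This gives $s$-Ahlfors regularity of $\mu$, and by the standard density argument also of $K$.
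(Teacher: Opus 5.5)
Your proposal is correct and is essentially the paper's proof: the decisive step is that the descendants of $u=P^{(\ell-1)}(x)$ stay within $\lesssim C_1 N r^{n-\ell+1}\sin\theta_{j(u)}$ of $u$, while the sibling gap is $\gtrsim r^{n-\ell}\max\{\sin\theta_j,\sin\theta_k\}/N$, so the relative error is $O(C_1N^2r)$ and vanishes exactly when $s<1/2$; this is how the paper gets the lower bound, and the Hausdorff convergence and Ahlfors regularity are the same standard symbolic-space argument that the paper simply cites from \cite{DJJ23}. One small point: the descendants of $y\in K_m$ should be described as a subset of $B(y,4C_1Nr^m)$ rather than as $K\cap B(y,4C_1Nr^m)$, since for $s>1/3$ that ball can meet neighbouring clusters; your regularity argument only uses this containment together with the separation, so nothing breaks.
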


\begin{proof}
Let $x\in K_n$, and let $y\in\mathcal{Y}_\ell(x)$
for some $1\leq\ell\leq n$.

If $\ell=1$, then, writing $p=\hat{x}$, we have
\[
|x-y|
=
a_{n-1}(p)r^{n-1}
|Q_p(q_{j(x)})-Q_p(q_{j(y)})|.
\]
Thus, by the bi-Lipschitz property of $Q_p$ and
Lemma~\ref{lem:cheb},
\begin{equation}\label{eqn:sibling_dist}
cr^{n-1}\frac{\sin\theta_{j(x)}}N
\leq |x-y|
\leq 4C_1Nr^{n-1},
\end{equation}
for some absolute constant $c>0$.
This proves the desired estimates for $\ell=1$.

Suppose now that $\ell\geq2$. Set
\[
u=P^{(\ell-1)}(x),
\qquad
v=P^{(\ell-1)}(y).
\]
Then $u$ and $v$ are distinct children of their
common parent
\[
p=P^{(\ell)}(x)=P^{(\ell)}(y).
\]
Let $j=j(u)$ and $k=j(v)$, so that $j\ne k$.

We first estimate the distances from $x$ and $y$
to $u$ and $v$, respectively. By the bounds
on the scaling factors and on the polynomials
$Q_p$, we have
\[
\begin{aligned}
|x-u|
&\leq
CC_1N\left(r^{n-1}+\cdots+r^{n-\ell+2}\right)
+CC_1Nr^{n-\ell+1}\sin\theta_j
\\
&\leq
CC_1Nr^{n-\ell+1}
\left(r+\sin\theta_j\right)
\leq
CC_1Nr^{n-\ell+1}\sin\theta_j,
\end{aligned}
\]
where we used that
$r=N^{-1/s}<N^{-1}\lesssim\sin\theta_j$.

The same argument, applied to $y$, gives
\[
|y-v|
\leq
CC_1Nr^{n-\ell+1}\sin\theta_k.
\]
Consequently,
\begin{equation}\label{eqn:two-tails}
|x-u|+|y-v|
\leq
CC_1Nr^{n-\ell+1}
\max\{\sin\theta_j,\sin\theta_k\}.
\end{equation}

On the other hand, since $u$ and $v$ are
siblings, the bi-Lipschitz property of $Q_p$
and the estimate in the proof of
Lemma~\ref{lem:weighted} yield
\[
\begin{aligned}
|u-v|
&=
a_{n-\ell}(p)r^{n-\ell}
|Q_p(q_j)-Q_p(q_k)|
\geq
\frac12 r^{n-\ell}|q_j-q_k|
\\
&\geq
c r^{n-\ell}
\max\{\sin\theta_j,\sin\theta_k\}
\frac{|j-k|}{N}
\geq
c r^{n-\ell}
\frac{\max\{\sin\theta_j,\sin\theta_k\}}N.
\end{aligned}
\]

Combining this estimate with
\eqref{eqn:two-tails}, we obtain
\[
\begin{aligned}
|x-y|
&\geq |u-v|-|x-u|-|y-v|
\\
&\geq
r^{n-\ell}
\max\{\sin\theta_j,\sin\theta_k\}
\left(\frac cN-CC_1Nr\right)
\\
&\geq
\frac{c}{2N}r^{n-\ell}
\max\{\sin\theta_j,\sin\theta_k\}
\geq cr^{n-\ell}N^{-2},
\end{aligned}
\]
provided that $N$ is sufficiently large
so that $CC_1N^2r\leq c/2$.
This is possible because $s<1/2$ and
$r=N^{-1/s}$.

For the upper bound, using the same
decomposition and the preceding estimates,
we have
\[
\begin{aligned}
|x-y|
&\leq |x-u|+|u-v|+|v-y| \leq CC_1Nr^{n-\ell}.
\end{aligned}
\]
This proves the desired estimates on $|x-y|$.
The identity
\[
\#\mathcal{Y}_\ell(x)=(N-1)N^{\ell-1}
\]
follows directly from the tree structure.

Finally, the convergence of $K_n$ in the
Hausdorff distance and the $s$-Ahlfors
regularity of the limiting measure follow
by the standard symbolic-space argument,
using the separation and diameter bounds
established above. See, for example,
\cite[Proposition 2.1]{DJJ23}.
\end{proof}

\section{The basic construction: the range $s<1/3$}\label{sec:basic-construction}

Our goal in this section is to prove Theorem \ref{thm:main} for $0 < s < 1/3$ in the model case as in \eqref{eqn:model1}. Hence we assume throughout this section that $0<s<1/3$. Set
$
    K_0=\{0\}.
$
We construct the sets $K_n$ inductively. Assuming $K_{n-1}$ is constructed, then for each $p\in K_{n-1}$, we choose a scalar
$a_{n-1}(p)\geq 1$ depending just on $p$ and define the $N$ children of $p$ by
\[
    x_{k}
    =
    p+a_{n-1}(p)r^{n-1}q_k,
    \qquad 1\leq k\leq N.
\]
Let $K_n$ be the set of all such children. Next, we define
\[
 \mu_n=N^{-n}\sum_{x\in K_n}\delta_x, \qquad 
 g_n(x)=N^{-n}\sum_{\substack{y\in K_n\\y\ne x}}\log|x-y|.
\]
Notice that $g_n$ is the discrete logarithmic potential of $\mu_n$, which we claim has vanishing oscillation as $n \rightarrow \infty$. More precisely, our main result in this section is the following.

\begin{lemma}[Key induction lemma, $s < 1/3$]
\label{lem:basic-ind} Fix $\eta > 0$ and $0 < s < 1/3$. For $N$ sufficiently large depending on $s$ and $\eta$, there exists a choice of $a_{n-1}(p) \ge 1$ so that for each
$n\geq1$ and $x\in K_n$,
\[
    g_n(x)
    =
    c_n
    +
    N^{-n}
    \bigl(
        \beta_{j(x)}+e_n(x)
    \bigr),
    \qquad
    |e_n(x)|\leq\eta.
\]
Moreover, for every $n\geq2$ and every $p\in K_{n-1}$,
\[
    1
    \leq
    a_{n-1}(p)
    \lesssim_\eta
    N\sin\theta_{j(p)},
\]
and $c_n \in \R$ converges as $n \rightarrow \infty$. 
\end{lemma}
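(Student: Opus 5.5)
Proof is by induction on $n$; the parameter $a_{n-1}(p)$ is chosen at each stage to cancel the inherited oscillation. For $n=1$, $K_1=\{q_k\}$ and $g_1(q_j)=N^{-1}b_j=N^{-1}(b_*+\beta_j)$ by \eqref{eqn:bj}. So we take $c_1=N^{-1}b_*$ and $e_1\equiv0$.

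\textbf{Decomposition of $g_n$.} For the inductive step, fix $x\in K_n$ with parent $p=\hat x$ and index $j=j(x)$, so $x=p+a\,r^{n-1}q_j$ with $a=a_{n-1}(p)$. Split $K_n\setminus\{x\}=\mathcal Y_1(x)\cup\bigcup_{\ell\ge2}\mathcal Y_\ell(x)$. The sibling sum is exact:
\[
N^{-n}\sum_{y\in\mathcal Y_1(x)}\log|x-y|=N^{-n}\bigl((N-1)\log(a r^{n-1})+b_*+\beta_j\bigr).
\]
For the far points, group each $y\in\mathcal Y_{\ge2}(x)$ with its parent $\hat y\ne p$. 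Write $\hat y$'s children as $\hat y+a'r^{n-1}q_k$ and expand
\[
\sum_k\log|x-\hat y-a'r^{n-1}q_k|=N\log|x-\hat y|-\sum_{m\ge1}\frac{(a'r^{n-1})^m}{m(x-\hat y)^m}\sum_k q_k^m .
\]
The $m=1$ term vanishes by Lemma~\ref{lem:cheb0}. Then expand $N\log|x-\hat y|$ around $p$: $\log|p-\hat y|+\frac{a r^{n-1}q_j}{p-\hat y}+O\bigl((ar^{n-1})^2/|p-\hat y|^2\bigr)$. Summing over $\hat y\in K_{n-1}\setminus\{p\}$ gives
\[
g_n(x)=g_{n-1}(p)+N^{-n}\bigl((N-1)\log(ar^{n-1})+b_*+\beta_j\bigr)+a r^{n-1}q_j\,S(p)+E,
\]
where $S(p)=N^{-(n-1)}\sum_{\hat y\ne p}(p-\hat y)^{-1}$.

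\textbf{Choice of $a_{n-1}(p)$.} By induction, $g_{n-1}(p)=c_{n-1}+N^{-(n-1)}(\beta_{j(p)}+e_{n-1}(p))$. Since $(N-1)N^{-n}\log a\approx N^{-(n-1)}\log a$, I would set
\[
\log a_{n-1}(p)=\tfrac{N}{N-1}\bigl(-\beta_{j(p)}-e_{n-1}(p)\bigr)\ge0,
\]
possibly adjusted by a bounded constant absorbed into $c_n$. This removes the parent's oscillation and leaves the new term $N^{-n}\beta_j$. From \eqref{eqTX20}, $-\beta_{j(p)}=\log(\sin\theta_{j(p)}/\sin\theta_1)\le\log(N\sin\theta_{j(p)})+O(1)$, and $|e_{n-1}|\le\eta$. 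Hence $1\le a\lesssim_\eta N\sin\theta_{j(p)}$, so Lemma~\ref{lem:ahlf_reg} applies with $C_1=C_1(\eta)$ and all the distance bounds \eqref{eqn:nca} become available. Define
\[
c_n=c_{n-1}+N^{-n}\bigl((N-1)(n-1)\log r+b_*\bigr),
\]
which converges since $n\,N^{-n}\log(1/r)$ is summable.

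\textbf{Error terms (main obstacle).} Everything now reduces to showing that $N^n|E+ar^{n-1}q_jS(p)|\le\eta$ uniformly, and this is where I expect the difficulty, since the errors must beat the weight $N^{-n}$.

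\emph{The linear term $ar^{n-1}S(p)$.} Split $S(p)$ by nearest common ancestor level $\ell$. At level $\ell\ge2$ there are about $N^{\ell-1}$ points at distance $\simeq r^{n-\ell}$ (up to powers of $N$ by Lemma~\ref{lem:ahlf_reg}). This gives $N^n a r^{n-1}|S|\lesssim aN\sum_\ell N^{\ell}r^{\ell-1}\cdot$(powers of $N$). The leading sibling-level piece ($\ell=2$, siblings of $p$) is $N\,a r\,N^{\ast}/(\text{gap})$.

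A cruder route is to use Lemma~\ref{lem:weighted} with the exact identity \eqref{eqTX21} and the weights $a\lesssim N\sin\theta$. This controls the $\ell=2$ term by roughly $N\cdot N\sin\theta\cdot r\cdot N^2/\sin\theta\cdot N\sim N^{4}r$. That would only give $s<1/4$, so the sharper structure is needed. The intended gain is that \eqref{eqTX21} is smooth and $q_j\sin$-weighted, so the level-2 contribution is a multiple of $r$ times a factor with only about $N^2$ growth. The conditions then become $N^3r\to0$, i.e.\ $N^{3-1/s}\to0$, which is exactly $s<1/3$.

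\emph{The quadratic term $E$.} The second-order terms are bounded by $N^n(ar^{n-1})^2\sum|p-\hat y|^{-2}N^{-(n-1)}$. Using \eqref{eqTX22} and Lemma~\ref{lem:weighted} with $m=2$, this is again $\lesssim N^3r$ or smaller, with the higher levels $\ell$ decaying geometrically because $Nr^{\cdot}\ll1$.

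\emph{Closing the induction.} I would carefully carry the $\sin\theta$ weights so that every error is at most $C_\eta N^{3}r=C_\eta N^{3-1/s}\le\eta$ for $N$ large. The errors must not accumulate, because they are absorbed into the new $e_n(x)$ rather than summed.
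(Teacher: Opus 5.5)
Your proposal is essentially the paper's proof. It uses the same induction and base case, and the same scalar choice up to a shift by $\eta$: the paper takes $\log a_{n-1}(p)=\frac{N}{N-1}(\eta-\beta_{j(p)}-e_{n-1}(p))$, and your hedged ``bounded constant'' shift really is required, since your formula gives $a_{n-1}(p)<1$ when $\beta_{j(p)}=0<e_{n-1}(p)$. Your remaining error $N^n(ar^{n-1}q_jS(p)+E)$ is exactly the paper's $\Delta_2+\Delta_3$, which the paper controls by \eqref{eqTX21}, Lemma~\ref{lem:weighted} and geometric decay over ancestor levels.

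Two small corrections to your error sketch. First, \eqref{eqTX21} together with $a_{n-1}(p)\lesssim_\eta N\sin\theta_{j(p)}$ already gives $N\cdot N\sin\theta_{j(p)}\cdot r\cdot\sin^{-2}\theta_{j(p)}\lesssim N^3r$ for the sibling linear term. So your $N^4r$ figure is a miscount, and no further structure is needed. Second, the quadratic cluster terms carry $a_{n-1}(\hat y)\lesssim_\eta N\sin\theta_{j(\hat y)}$ rather than $a_{n-1}(p)$. This is why Lemma~\ref{lem:weighted} is applied with $m=a=2$, giving $N^5r^2$.
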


The way we prove Lemma \ref{lem:basic-ind} is through an inductive argument, choosing the $a_{n-1}(p)$ one generation at a time, based on the following decomposition of $g_{n}(x) - g_{n-1}(\hat{x})$. Notice that since $ -C_N \le \beta_{j(x)} \le 0$ is bounded by a constant depending on $N$, the oscillation of $g_n$ about the constants $c_n$ vanishes as $n \rightarrow \infty$. 

\subsection{The DJJ decomposition}
Let $x\in K_n$ and $p=\widehat x$ be the parent of $x$. As in \cite{DJJ23}, our goal is to estimate the difference
\[
 g_n(x)-g_{n-1}(p)=N^{-n}(\Delta_1+\Delta_2+\Delta_3),
\]
where we split the difference in three terms,
\begin{equation}\label{eqn:delta}
	\begin{split}
	 \Delta_1& =\sum_{\substack{y\in K_n\\\widehat y=p,\ y\ne x}}\log|x-y|, \\
	 \Delta_2& =N\sum_{\substack{y\in K_{n-1}\\y\ne p}}
	 \bigl(\log|x-y|-\log|p-y|\bigr), \\
	 \Delta_3& =\sum_{\substack{y\in K_{n-1}\\y\ne p}}\sum_{\widehat z=y}
	 \bigl(\log|x-z|-\log|x-y|\bigr).
	 \end{split}
\end{equation}

The $\Delta_1$ term is the contribution of the siblings of $x$ to the
logarithmic potential at $x$, and as $a_{n-1}(p)$ varies, it acts as a corrective term to the error $e_{n-1}(p)$ from the previous generation. Indeed, suppose that $K_{n-1}$ has been constructed so that $\abs{e_{n-1}(p)} \le \eta$. Suppose that $y \ne x$ is a sibling of $x$, and $x$ has index $k$ and $y$ has index $\ell$. It follows that 
$|x-y| = a_{n-1}(p)\,r^{n-1}\,|q_k-q_\ell|$. Recalling the definition $b_k=\sum_{\ell\ne k}\log|q_k-q_\ell|$, one computes
\begin{equation}\label{eqDelta1}
\begin{split}
 \Delta_1 &=  \sum_{\ell\ne k} \log\left( a_{n-1}(p)r^{n-1}|q_k-q_\ell| \right)
= \sum_{\ell\ne k} \left( \log a_{n-1}(p) +(n-1)\log r +\log|q_k-q_\ell| \right) \\& = (N-1)\left(\log a_{n-1}(p) +(n-1)\log r\right) +b_k.
\end{split}
\end{equation}

Suppose $p$ has index $j$. Then we define $a_{n-1}(p)$ by the equality
\[
 \log a_{n-1}(p)=\frac{N}{N-1}
 \bigl(\eta-\beta_j-e_{n-1}(p)\bigr),
\]
where $\beta_j = b_j - \max_{i} b_i$ is as in the equations preceding Lemma \ref{lem:cheb0}.
Then, since $\beta_j\leq0$ and $|e_{n-1}(p)|\leq\eta$,  it holds that $a_{n-1}(p)\ge1$. Also, using that $\beta_j=\log\frac{\sin\theta_1}{\sin\theta_j}$,
\[
 a_{n-1}(p)\lesssim_\eta
 \left(\frac{\sin\theta_j}{\sin\theta_1}\right)^{N/(N-1)}
 \lesssim_\eta N\sin\theta_j,
\]
taking into account that $\theta_1\simeq 1/N$ and $N^{N/(N-1)}\lesssim N$.
Moreover,
\[
 N(\beta_j+e_{n-1}(p))+(N-1)\log a_{n-1}(p)=N\eta.
\]
Hence, by induction, \eqref{eqDelta1},  the choice of $a_{n-1}(p)$, and the fact that $\beta_k=b_k-b_*$,
\begin{align*}
g_n(x) &= g_{n-1}(p) +N^{-n}(\Delta_1+\Delta_2+\Delta_3)\\
 & = 
c_{n-1}+N^{-n+1}\bigl(\beta_{j}+e_{n-1}(p)\bigr) \\
&\quad+N^{-n}((N-1)\left(\log a_{n-1}(p) +(n-1)\log r\right) +b_k+\Delta_2+\Delta_3)\\
&
=c_n+N^{-n}(\beta_k+\Delta_2+\Delta_3),
\end{align*}
where
\[
 c_n=c_{n-1}+N^{-n}\bigl[(N-1)(n-1)\log r+b_*+N\eta\bigr].
\]
Notice that $\abs{c_{n} - c_{n-1}} \le C_{r, \eta, b^*} nN^{-n+1}$, so $c_n$ converges as $n \rightarrow \infty$. Moreover, the new error is $e_n(x)=\Delta_2+\Delta_3$. Hence to prove Lemma \ref{lem:basic-ind}, we need only show the following estimates:
\vv

\begin{lemma}[Basic error estimate]\label{lem:basic-error}
Suppose that $a_{k}(p)$ are chosen so that $1 \le a_k(p) \lesssim_\eta N \sin(\theta_{j(p)})$ for $1 \le k \le n-1$. Then for all $N$ sufficiently large, we have 
\[
 |\Delta_2|\lesssim_\eta N^3r+N^6r^2,
 \qquad
 |\Delta_3|\lesssim_\eta N^5r^2+N^9r^4.
\]
\end{lemma}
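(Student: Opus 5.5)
The plan is to Taylor-expand both $\Delta_2$ and $\Delta_3$ in the small ratios of displacement to distance. Then I split each sum over $y\in K_{n-1}\setminus\{p\}$ according to the nearest-common-ancestor decomposition $y\in\mathcal Y_\ell(p)$, $1\le \ell\le n-1$. For $\ell=1$ (siblings of $p$) I use the exact Chebyshev identities of Lemma \ref{lem:cheb} and the weighted sums of Lemma \ref{lem:weighted}. For $\ell\ge2$ I use only the crude bounds \eqref{eqn:nca} of Lemma \ref{lem:ahlf_reg} together with $\#\mathcal Y_\ell=(N-1)N^{\ell-1}$; those sums are geometric because $N^2r\ll1$. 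Lemma \ref{lem:ahlf_reg} applies since $a_k(p)\lesssim_\eta N\sin\theta_{j(p)}$ and $Q_p=\mathrm{id}$. Throughout, write $p$ with index $j$, $x$ with index $k$, and $h=x-p=a_{n-1}(p)r^{n-1}q_k$, so that $|h|\lesssim_\eta N\sin\theta_j\, r^{n-1}$.

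\textbf{$\Delta_2$.} I would write $\log|x-y|-\log|p-y|=\log|1+t_y|$ with $t_y=h/(p-y)$, and use $\log|1+t|=t+O(t^2)$ for $|t|\le1/2$.
\begin{itemize}
\item Smallness of $t_y$. For $y\in\mathcal Y_1(p)$, \eqref{eqn:sibling_dist} gives $|p-y|\gtrsim r^{n-2}\sin\theta_j/N$, so $|t_y|\lesssim N^2r\ll1$. For $y\in\mathcal Y_\ell(p)$ with $\ell\ge2$, $|p-y|\gtrsim N^{-2}r^{n-1-\ell}$, so $|t_y|\lesssim N^3r^\ell$.
\item Linear term, $\ell=1$. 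This is where cancellation is needed. By \eqref{eqTX21}, $\sum_{y\in\mathcal Y_1(p)}(p-y)^{-1}=q_j/\bigl(2(1-q_j^2)a_{n-2}r^{n-2}\bigr)$, which has size $\lesssim r^{-(n-2)}\sin^{-2}\theta_j$. Multiplying by $N|h|$ gives $\lesssim N^2 r/\sin\theta_j\lesssim N^3r$.
\item Quadratic term, $\ell=1$. Lemma \ref{lem:weighted} with $m=2$, $a=0$ gives $N|h|^2\sum_{\mathcal Y_1}|p-y|^{-2}\lesssim N\cdot N^2\sin^2\theta_j\,r^2\cdot N^2/\sin^2\theta_j=N^5r^2$.
\item Levels $\ell\ge2$. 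Bounding $|\log|1+t_y||\lesssim|t_y|$ crudely gives $\sum_{\ell\ge2}N\cdot N^\ell\cdot N^3r^\ell\lesssim N^6r^2$, provided $N^2r<1/2$.
\end{itemize}
Altogether $|\Delta_2|\lesssim_\eta N^3r+N^6r^2$.

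\textbf{$\Delta_3$.} I would write $\log|x-z|-\log|x-y|=\log|1+u_z|$ with $u_z=(z-y)/(y-x)$, and expand to second order.
\begin{itemize}
\item Linear term. It vanishes exactly after summing over the children $z$ of $y$, since $\sum_z(z-y)=a_{n-1}(y)r^{n-1}\sum_iq_i=0$ by Lemma \ref{lem:cheb0}. This is the key cancellation.
\item What remains. It is $\lesssim\sum_y\sum_z|z-y|^2/|x-y|^2\lesssim\sum_y N\cdot N^2\sin^2\theta_{j(y)}r^{2(n-1)}/|x-y|^2$. Here $|x-y|\simeq|p-y|$ because $|h|$ is much smaller than $|p-y|$ by the previous step.
\item Level $\ell=1$. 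Lemma \ref{lem:weighted} with $m=2$, $a=2$ gives $N^3r^{2(n-1)}\cdot N^2r^{-2(n-2)}=N^5r^2$.
\item Levels $\ell\ge2$. Using $|p-y|\gtrsim N^{-2}r^{n-1-\ell}$ and $\sin\le1$, each term is $\lesssim N^7r^{2\ell}$. Summing over $N^\ell$ points and over $\ell\ge2$ gives $\lesssim N^9r^4$.
\item Smallness of $u_z$. We have $|u_z|\lesssim N^3r^\ell$, which is needed for the expansion. It also shows the cubic remainders are dominated by the quadratic ones.
\end{itemize}
Altogether $|\Delta_3|\lesssim_\eta N^5r^2+N^9r^4$.

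\textbf{Main obstacle.} The delicate step is the $\ell=1$ contribution, where the $\sin\theta$ factors must be tracked exactly. The crude distance bound $N^{-2}r^{n-2}$ would lose powers of $N$ there. The growth $a_{n-1}(p)\lesssim N\sin\theta_j$ must be offset precisely by the $\sin^{-2}\theta_j$ in \eqref{eqTX21} and by Lemma \ref{lem:weighted}, using $\sin\theta_j\gtrsim1/N$. I would carry out these level-one computations first and check the exponents. The geometric summation over $\ell\ge2$ is then routine once $N$ is large enough that $N^3r$ is small; this holds because $s<1/3$ and $r=N^{-1/s}$.
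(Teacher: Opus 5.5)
Your proposal is correct and follows the paper's proof essentially step for step: the same split over $\mathcal Y_\ell(p)$, the identity \eqref{eqTX21} for the level-one linear term in $\Delta_2$, Lemma~\ref{lem:weighted} (with $a=0$, resp.\ $a=2$) for the level-one quadratic terms, the barycenter cancellation $\sum_i q_i=0$ in $\Delta_3$, and geometric series for $\ell\ge2$. The only difference is harmless: you justify the level-one Taylor expansions with the sharper gap $|q_j-q_i|\gtrsim \sin\theta_j/N$, which makes the ratios $\lesssim N^2 r$, whereas the paper uses the cruder minimal gap $d_N\simeq N^{-2}$ and therefore needs $N^3 r\ll 1$.
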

\begin{proof}
For $\ell \ge 1$, recall the notation $\mathcal Y_\ell(p)$, which denotes the set of points
$y\in K_{n-1}$ whose nearest common ancestor with $p$ is $\ell$ generations above $p$. We split 
\begin{align*}
	\Delta_2 & = N \sum_{y \in K_{n-1} \setminus\{p\}} \left( \log\abs{x-y} - \log\abs{p-y}\right) =  \sum_{ \ell = 1  }^{n-1} \Delta_2^{(\ell)}, \\
	\Delta_2^{(\ell)} & \coloneqq N \sum_{  y \in \mathcal{Y}_\ell(p) } \left( \log\abs{x-y} - \log\abs{p-y}\right).
\end{align*}
Recall from Lemma \ref{lem:ahlf_reg} that
\begin{equation}\label{eq:corona-cardinality-delta2}
	\#\mathcal Y_\ell(p)
	=
	(N-1)N^{\ell-1}.
\end{equation}
Let us first estimate $\Delta_2^{(1)}$, which corresponds to the siblings of $p$ in $K_{n-1}$. 

Suppose that \(p\) has index $j$ and parent \(q\in K_{n-2}\), and recall that \(x\in K_n\) has index $k$ and parent \(p\), so that we may write
\begin{align*}
p =q+a_{n-2}(q)r^{n-2}q_j,  \qquad  x=p+a_{n-1}(p)r^{n-1}q_k.
\end{align*}
The elements of $\mathcal{Y}_1(p)$ are given by $y_\ell = q + a_{n-2}(q)r^{n-2} q_\ell$ for $\ell \ne j$, and so direct computation gives us 
\begin{align*}
	\frac{ \abs{x-y_\ell} }{\abs{p - y_\ell}} = \abs{ 1 + r \frac{a_{n-1}(p)}{a_{n-2}(q)}  \frac{ q_k  }{q_j - q_\ell}  } \eqqcolon \abs{ 1 +  \frac{ t_x  }{q_j - q_\ell}  }.
\end{align*}
Since $a_{n-2}(q)\ge1$, $|q_k|\le1$, and $a_{n-1}(p)\lesssim_\eta N\sin\theta_j$, we see
\begin{equation}\label{eq:t-estimate} |t_x| \lesssim_\eta rN\sin\theta_j,
\end{equation} 
and so with the estimate \eqref{eqTX23} we further obtain
\begin{align*}
	\abs{\frac{t_x}{q_j - q_\ell}  } \lesssim_\eta r N^3 \sin\theta_j \ll 1,
\end{align*}
provided that $N$ is sufficiently large. We may then use $\log\abs{1 + u} = u + O(u^2)$ for $\abs{u} \le 1/2$ to obtain 
\begin{align*}\label{eq:Delta2-first-exact} \Delta_2^{(1)} =  N\sum_{y \in \mathcal{Y}_1(p)}
	\log\frac{|x-y|}{|p-y|} & = N\sum_{\ell\ne j} \log\left| 1+\frac{t_x}{q_j-q_\ell} \right|  \\
	& = Nt_x\sum_{\ell\ne j}\frac1{q_j-q_\ell} 
	+ O\left( Nt_x^2 \sum_{\ell\ne j} \frac1{|q_j-q_\ell|^2} \right).
\end{align*} 
By applying Lemma \ref{lem:cheb}, \eqref{eq:t-estimate}, and recalling that $(1 - q_j^2) = \sin^2 \theta_j$,  we deduce that 
\begin{equation}\label{eq:Delta2-first-final}
	\begin{split}
	\abs{  \Delta_2^{(1)}  } & \le N \abs{t_x} \abs{\frac{q_j}{2(1- q_j^2)}} + C N^3 \abs{t_x}^2/(\sin \theta_j)^2 \\
	& \lesssim_\eta r N^2/(\sin \theta_j) + r^2 N^5 \lesssim N^3r+N^5r^2,
	\end{split}
\end{equation}
since $\sin\theta_j \ge \sin \theta_1 \simeq N^{-1}$.

We now estimate the contribution of the $\mathcal{Y}_\ell(p)$ for $\ell \ge 2$, which is much simpler. Fix $y \in \mathcal{Y}_\ell(p)$, and recall from Lemma \ref{lem:ahlf_reg} that for $N$ large enough, we have 
\begin{align*}
	\abs{p-y} \gtrsim N^{-2} r^{n-\ell-1},
\end{align*}
while we have for $h = x-p$ that $\abs{h} \lesssim_\eta N r^{n-1}$. Hence we may again expand $\log\abs{1+ u} \simeq u$ for $u$ small to see 
\begin{align*}
	\abs {\log \frac{\abs{x-y}}{\abs{p-y}}} = \abs{\log \abs{  1 + \frac{h}{p-y}  }} \lesssim_\eta   N^3r^\ell, 
\end{align*}
where the expansion is justified since $\ell \ge 2$ and $N^3 r^2 \ll 1$ for $N$ large. Altogether, we estimate using the above inequality along with \eqref{eq:corona-cardinality-delta2},
\begin{align*}
	\abs{ \sum_{\ell = 2}^{n-1} \Delta_2^{\ell} } \le N \sum_{\ell=2}^{n-1} \sum_{y \in \mathcal{Y}_\ell(p)} \abs{ \log \frac{\abs{x-y}}{\abs{p-y}}  } \lesssim_\eta  N \sum_{\ell=2}^{n-1} N^3 r^\ell N^\ell \lesssim N^6 r^2 \ll 1,
\end{align*}
since $Nr \ll 1$ for $N$ large. Along with \eqref{eq:Delta2-first-final} this completes the proof of the estimate on $\Delta_2$.

\vspace{1em}

Now, we move to the estimates for $\Delta_3$. Recall that
\[
\Delta_3
=
\sum_{\substack{y\in K_{n-1}\\ y\ne p}}
\sum_{\widehat z=y}
\bigl(
\log|x-z|-\log|x-y|
\bigr).
\]
For $y\in K_{n-1}\setminus\{p\}$, the $N$ children of $y$ are
\[
z_m
=
y+a_{n-1}(y)r^{n-1}q_m,
\qquad 1\le m\le N,
\]
and so the contribution of the children of $y$ to $\Delta_3$ can be
written as
\begin{align}
\sum_{\widehat z=y}
\bigl(
\log|x-z|-\log|x-y|
\bigr)
&=
\sum_{m=1}^N
\log\left|
1-\frac{a_{n-1}(y)r^{n-1}q_m}{x-y}
\right|.
\label{eq:Delta3-one-parent}
\end{align}
We claim that $a_{n-1}(y) r^{n-1}q_m / (x-y)$ is sufficiently small, so that we can use the expansion $\log\abs{1 + u} = u + O(u^2)$ in each logarithm above.

Indeed, if $y \in \mathcal{Y}_\ell(p)$ for some $1 \le \ell \le n-1$, then we may apply Lemma \ref{lem:ahlf_reg} to see that 
\begin{align}\label{eqn:x-y_low}
	\abs{x-y} \ge \abs{y-p} - \abs{x-p} \gtrsim_\eta N^{-2} r^{n-1-\ell} - a_{n-1}(p) r^{n-1} \ge r^{n-1-\ell} \left(  N^{-2} - C_\eta N r^\ell \right).
\end{align}
Since $s < 1/3$ and $r = N^{-1/s}$, then as long as $N$ is chosen large enough, the difference in the last line can be made positive, and we have that $\abs{x-y} \gtrsim_\eta r^{n-1-\ell} N^{-2}$. On the other hand, we have that $\abs{a_{n-1}(y) r^{n-1} q_m} \lesssim_\eta N r^{n-1}$, and so 
\begin{align*}
	\abs{ \frac{a_{n-1}(y) r^{n-1} q_m}{x-y}  } \lesssim_\eta N^3 r^\ell \le N^3 r \ll 1, 
\end{align*}
again since $0 < s < 1/3$. Expanding the logarithm in \eqref{eq:Delta3-one-parent} and using the identity $\sum_{m=1}^Nq_m=0$ to cancel the linear term in $q_m$, we see that 
\begin{align}
	\left|
	\sum_{\widehat z=y}
	\bigl(
	\log|x-z|-\log|x-y|
	\bigr)
	\right|
	&\lesssim
	\sum_{m=1}^N
	\frac{a_{n-1}(y)^2r^{2n-2}q_m^2}{|x-y|^2}
	\lesssim
	N\frac{a_{n-1}(y)^2r^{2n-2}}{|x-y|^2}.
	\label{eq:Delta3-one-parent-bound}
\end{align}
As in the estimates for $\Delta_2$, we split our estimates by generations $\mathcal{Y}_\ell(p)$ for $\ell =1$ and $2 \le \ell \le n-1$; that is, we write 
\begin{align*}
	\Delta_3 = \sum_{\ell=1}^{n-1} \Delta_3^{(\ell)}, \qquad \Delta_3^{(\ell)} = \sum_{y \in \mathcal{Y}_\ell(p)} \sum_{\hat{z} = y} \left(  \log \abs{x-z} -\log \abs{x-y}  \right).
\end{align*}

For $\ell =1$, suppose that $p$ has index $j$ and parent $q \in K_{n-2}$, so that $p = q + a_{n-2}(q) r^{n-2} q_j$. The other
children of $q$ (i.e., the elements of $\mathcal{Y}_1(p)$) are $y_i
=
q+a_{n-2}(q)r^{n-2}q_i$ for $i\ne j$. 
Since
\[
p-y_i
=
a_{n-2}(q)r^{n-2}(q_j-q_i), \text{  and  } 
x-p=O_\eta(Nr^{n-1}),
\]
the smallness assumption $N^3r\ll_\eta1$ implies
\[
|x-y_i|\approx |p-y_i|
\approx
a_{n-2}(q)r^{n-2}|q_j-q_i|,
\]
with constants independent of $N$. 
Therefore, using
$a_{n-2}(q)\ge1$, \eqref{eq:Delta3-one-parent-bound} yields
\begin{align}
	|\Delta_3^{(1)}|
	&\lesssim
	N
	\sum_{i \ne j}
	\frac{a_{n-1}(y_i)^2r^{2n-2}}
	{a_{n-2}(q)^2r^{2n-4}|q_j-q_i|^2}
	\lesssim
	Nr^2
	\sum_{i \ne j}
	\frac{a_{n-1}(y_i)^2}{|q_j-q_i|^2}.
	\label{eq:Delta3-first-pre}
\end{align}
Since $y_i$ has index $i$, we have the bound $a_{n-1}(y_i)
\lesssim_\eta
N\sin\theta_i$ and thus 
\begin{align}
	\sum_{i\ne j}
	\frac{a_{n-1}(y_i)^2}{|q_j-q_i|^2}
	&\lesssim_\eta
	N^2
	\sum_{i\ne j}
	\frac{\sin^2\theta_i}{|q_j-q_i|^2} \lesssim_\eta N^4,
\end{align}
where the last inequality follows from Lemma~\ref{lem:weighted}, with $m=a=2$. Substituting into \eqref{eq:Delta3-first-pre}, we obtain
\begin{equation}\label{eq:Delta3-first-final}
	|\Delta_3^{(1)}|
	\lesssim_\eta
	N^5r^2.
\end{equation}

The estimates for $\Delta_3^{(\ell)}$ with $\ell \ge 2$, as in the case of $\Delta_2$, are much simpler. We use the previous bound $\abs{x-y} \gtrsim_\eta N^{-2} r^{n-1-\ell}$ from \eqref{eqn:x-y_low}, the fact that $\# \mathcal{Y}_\ell(p) = (N-1)N^{\ell-1} \le N^\ell$, and the crude upper bound $\abs{a_{n-1}(y)} \lesssim_\eta N$ to pass from \eqref{eq:Delta3-one-parent-bound} to
\begin{align*}
	\sum_{\ell = 2}^{n-1} |\Delta_3^{(\ell)}| \lesssim \sum_{\ell=2}^{n-1} \sum_{y \in \mathcal{Y}_\ell (p)} N \frac{  a_{n-1}(y)^2 r^{2n-2}  }{\abs{x-y}^2} \lesssim N^7 \sum_{\ell=2}^{n-1} (N r^2)^{\ell } \lesssim N^9 r^4,
\end{align*}
where again the summability of the geometric series comes from the fact that $N r^2 \ll 1$. Combining the above with \eqref{eq:Delta3-first-final}, we conclude 
\begin{align*}
	\abs{\Delta_3} \lesssim N^5 r^2 + N^9 r^4
\end{align*}
as desired.
\end{proof}
\vv

We can now prove the main lemma of this section.
\begin{proof}[Proof of Lemma~\ref{lem:basic-ind}]
	
{Fix $\eta >0$ small. 
	At generation one, choosing $a_0(0) = 1$, we may write for $x = q_k \in K_1$,
	\begin{align*}
		g_1(x) = g_1(q_k) = N^{-1} \sum_{j \ne k} \log \abs{q_k -q_j} = \frac{b_k}N = \frac{b^*}N + \frac{\beta_k}N.
	\end{align*}
	and so $e_1(x) = e_1(q_k)=0$.
Of course the first choice, $a_0(0) =1$ satisfies $1 \le a_0(0) \lesssim_\eta N$. 
	
	By Lemma \ref{lem:basic-error} and the argument that precedes it, given that the error $e_{n-1}(x)$ satisfies $\abs{e_{n-1}(x)} < \eta$ for all $x \in K_{n-1}$, inductively we may choose $a_{n-1}(p)$ satisfying $1 \le a_{n-1}(p) \lesssim_\eta N \sin \theta_{j(p)}$ so that 
	\begin{align*}
		\abs{e_{n}(x)} \le \abs{\Delta_2} + \abs{\Delta_3} \lesssim_\eta N^3 r < \eta, \text{ for all } x \in K_n,
	\end{align*}
	provided that $N$ is large enough, since $s < 1/3$.

}
\end{proof}
\vv

The next theorem now follows from Lemmas \ref{lem:ahlf_reg} and \ref{lem:basic-ind} in a standard way.

\begin{theorem}\label{thm:one-third}
	For every $0 < s< 1/3$, there exists an $s$-Ahlfors regular compact set $K \subset \R \subset \C$ so that the harmonic measure in $\C \setminus K$ with pole at infinity $\omega^\infty$ is equivalent to $\mathcal{H}^s|_K$, i.e., for some constant $C \ge 1$, 
	\begin{align*}
		C^{-1} \, \mathcal{H}^s|_K(F)\le \omega^\infty(F) \le C \mathcal{H}^s|_K(F) \text{ for all } F \subset K \text{ Borel.}
	\end{align*}
	In particular, $\dim \omega^\infty = s$. 
\end{theorem}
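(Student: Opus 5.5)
We will combine Lemma \ref{lem:basic-ind} (with a fixed small $\eta$ and $N$ large) and Lemma \ref{lem:ahlf_reg}, whose hypotheses hold since $1\le a_{n-1}(p)\lesssim_\eta N\sin\theta_{j(p)}$ and $Q_p=\mathrm{id}$. This produces the compact $s$-Ahlfors regular set $K$ and the weak limit $\mu$ of $\mu_n=N^{-n}\sum_{x\in K_n}\delta_x$, which is $s$-Ahlfors regular on $K$. In particular $\mu\simeq\mathcal H^s|_K$ with bounded densities, so it suffices to show $\omega^\infty\simeq\mu$. Since $\omega^\infty$ is the equilibrium measure $\mu_K$, the plan is to show that the logarithmic potential $U^\mu(z)=\int\log|z-w|\,d\mu(w)$ lies between two constants on $K$. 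A standard potential-theoretic argument then gives comparability with $\mu_K$.

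\textbf{Step 1: the potential is bounded on $K$.} For $x\in K_n$, Lemma \ref{lem:basic-ind} gives $g_n(x)=c_n+O(N^{-n}C_N)$, and $c_n\to c$. For $z\in K$ and each $n$, let $x_n\in K_n$ be the generation-$n$ ancestor approximating $z$, so that $|z-x_n|\lesssim C_\eta N r^n$. We compare $g_n(x_n)$ with $\int\log|z-w|\,d\mu_m(w)$ for $m\gg n$. Points of $K_m$ whose generation-$n$ ancestor $y$ differs from $x_n$ are at distance $\gtrsim N^{-2}r^{n-\ell}$ for $y\in\mathcal Y_\ell(x_n)$. Their contribution therefore changes by an amount controlled as in the estimates of $\Delta_2,\Delta_3$ (geometric sums in $N^3r^\ell$), which is uniformly bounded. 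The descendants of $x_n$ itself carry mass $N^{-n}$ at scale $r^n$, so they contribute $N^{-n}\log r^n+O(N^{-n})$, which tends to $0$. Letting $m\to\infty$ and then $n\to\infty$, with Ahlfors regularity of $\mu$ controlling the log singularity, we get $|U^\mu(z)-c|\le C$ on $K$. Lower semicontinuity and Ahlfors regularity make $U^\mu$ finite and continuous.

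\textbf{Step 2: pass to the equilibrium measure.} With $U^\mu$ bounded on $K$ we only get two-sided bounds, not constancy, so we use a local comparison. Fix $x\in K$ and $\rho<\mathrm{diam} K$. Write $\mu_K=\omega^\infty$. The maximum principle for $U^{\mu}-U^{\mu_K}$ (note $U^{\mu_K}\equiv\log\mathrm{cap}(K)$ q.e.\ on $K$) gives $|U^\mu-U^{\mu_K}|\le C$ on $\C$, since both are harmonic off $K$ with the same behaviour at infinity.

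Next, localize the oscillation. Alternatively, and this is the route we would actually pursue as in \cite{DJJ23}, we apply the same estimate of Step 1 at every scale. For each $y\in K_n$, the potential of the truncated measure $\mu$ minus its part on the descendants of $y$, plus the rescaled potential of those descendants, is constant up to $O(N^{-n})$. This says that the normalized restriction of $\mu$ to each cylinder has a potential that is nearly constant on that cylinder, relative to the exterior field, which is nearly constant there by the smoothness estimates. Comparing via Green functions on the cylinder neighbourhood $B(y,Cr^n)$, together with the standard estimate $\omega^\infty(B(y,r^n))\simeq$ (capacity-density-normalized) harmonic measure, yields $\omega^\infty(B(y,r^n))\simeq N^{-n}\simeq\mu(B(y,r^n))$ uniformly. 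A Vitali covering argument then gives $\omega^\infty\simeq\mu\simeq\mathcal H^s|_K$, and in particular $\dim\omega^\infty=s$.

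\textbf{Main obstacle.} The main obstacle is Step 2: turning the uniform-in-scale near-constancy of discrete potentials (errors $N^{-n}\eta$) into a doubling comparison $\omega^\infty(B)\simeq\mu(B)$. This is exactly the argument of \cite[Section 2]{DJJ23}, which we expect to apply verbatim once Step 1 and Lemma \ref{lem:ahlf_reg} are in hand.
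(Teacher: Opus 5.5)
\textbf{There is a genuine gap, and it is in Step 1, not only in Step 2.} The conclusion you reach there, $|U^\mu-c|\le C$ on $K$, uses nothing from Lemma \ref{lem:basic-ind}. It holds for every $s$-Ahlfors regular probability measure with $s>0$, because the upper density bound makes $\int|\log|z-w||\,d\mu(w)$ uniformly bounded.

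In particular it holds for the self-similar choice $a_{n-1}\equiv 1$. There, as noted in the introduction, $\dim\omega^\infty<s$, so $\omega^\infty\perp\mu$. Hence no argument can get from the information you carry out of Step 1 to $\omega^\infty\simeq\mu$, and Step 2 would have to prove the whole theorem. As written it does not:
\begin{itemize}
\item The bound $|U^\mu-U^{\mu_K}|\le C$ says nothing about comparability of the measures.
\item The paragraph on Green functions on cylinder neighbourhoods, ``capacity-density-normalized'' harmonic measure and Vitali asserts $\omega^\infty(B(y,r^n))\simeq N^{-n}$ without an argument.
\item This is also not what \cite{DJJ23} do.
\end{itemize}

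\textbf{The missing idea is that your own Step 1 computation gives exact constancy if you keep the weights.} Each point of $K_n$ carries mass $N^{-n}$, and every error term inherits that factor:
\begin{itemize}
\item Since $-C_N\le\beta_j\le0$, the oscillation of $g_n$ on $K_n$ is at most $N^{-n}(C_N+\eta)$.
\item For $y\in\mathcal{Y}_\ell(x_n)$, replacing $N^{-n}\log|x_n-y|$ by the integral of $\log|z-w|$ against $\mu$ over the descendants of $y$ costs $N^{-n}\,O(N^3r^\ell)$. Summing over $\#\mathcal{Y}_\ell(x_n)\le N^\ell$ gives $O_N(N^{-n})$ in total, not merely a bounded amount.
\item The cylinder of $x_n$ contributes $O_N(nN^{-n})$, by separation and Ahlfors regularity.
\end{itemize}
Hence $|U^\mu(z)-c_n|\lesssim_N nN^{-n}$ for every $n$, so $U^\mu\equiv c$ on $K$ exactly, not merely up to a constant.

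Then $\mu$ is a probability measure on $K$ with finite logarithmic energy and constant potential on $K$. Uniqueness of the equilibrium measure gives $\mu=\mu_K$, which is $\omega^\infty$ because $\C\setminus K$ is connected and Wiener regular. The equivalence $\omega^\infty\simeq\mathcal H^s|_K$ is then just the Ahlfors regularity of $\mu$. This is the paper's proof, following \cite[Lemma 3.3]{DJJ23}; with it, your Step 2 is unnecessary.
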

\begin{proof}
	Since the proof of this Theorem follows from the arguments in \cite{DJJ23}, we outline just the steps. As long as $N$ is chosen large enough, Lemma \ref{lem:ahlf_reg} shows that $K_n$ converge in the Hausdorff metric to a compact, $s$-Ahlfors regular subset $K \subset \R\subset \C$. Moreover, if $\mu$ is the weak limit of the Radon measures
	\begin{align*}
		\mu_n = N^{-n} \sum_{x \in K_n} \delta_x, 
	\end{align*}
	then we know that $\mu$ is an $s$-Ahlfors regular probability measure supported on $K$, and thus is equivalent to $\mathcal{H}^s|_K$. Arguing just as in \cite[Lemma 3.3]{DJJ23}, but now using Lemma \ref{lem:basic-ind}, one can show that the potential
	\begin{align*}
		g_\mu(x) = \int \log \abs{ x-y}  d\mu(y) 
	\end{align*}
	is constant on $K$, and so by uniqueness, $\mu$ is the equilibrium measure for $K$. Since $K$ is $s$-Ahlfors regular with $s > 0$, we know that $\C \setminus K$ is Wiener regular, and so the equilibrium measure $\mu = \omega_{\C\setminus K}^\infty$ is exactly the harmonic measure with pole at infinity. Since $\mu$ is equivalent to $\mathcal{H}^s|_K$, this completes the proof. 
\end{proof}

\section{First higher-order corrections: an overview}
\label{sec:first-corrections}

The construction of Section \ref{sec:basic-construction} uses, at each point $p\in K_{n-1}$, a
single parameter $a_{n-1}(p)$: all $N$ children of $p$ are placed at
the rescaled Chebyshev positions
\[
p+a_{n-1}(p)r^{n-1}q_k,
\qquad 1\le k\le N.
\]
The parameter $a_{n-1}(p)$ is chosen so as to correct the oscillation error
in the logarithmic potential inherited from the previous generation, which is enough to prove our main result when $s < 1/3$.

The reason it fails beyond $1/3$ is that,
even after this error has been corrected, the potential
created by the rest of the set is not exactly constant across the $N$
children of $p$. Its first nonconstant term is approximately linear in
the variable $q_k$. The purpose of the present section is to
explain how to modify the relative positions of the children in order
to cancel this linear variation. Once this is done, the remaining
nonconstant error is quadratic, and a second correction cancels that
quadratic profile.

The calculations in this section are meant mainly to motivate the
general polynomial renormalization developed later in Section \ref{sec:poly_corrections}. We therefore
emphasize the geometry of the construction and only sketch the error
estimates.

\subsection{The first correction: a quadratic deformation}

Fix $p\in K_{n-1}$. In the construction of Section~\ref{sec:basic-construction}, the children of
$p$ are located at
\[
    p+a_{n-1}(p)r^{n-1}q_k,
    \qquad 1\leq k\leq N.
\]
To go beyond the range $s<1/3$, we deform the relative positions of
the children. We retain the correction mechanism for the coefficients $a_{n-1}(p)$ of the basic construction, but introduce an additional deformation of the relative positions of the children.
The only new modification is in the relative positions
of the children: the Chebyshev nodes $q_k$ are replaced by a slightly
deformed configuration. Set
\[
    \phi(t)=t^2-\frac12,
    \qquad
    Q_p^{(1)}(t)=t+\delta_p\phi(t),
\]
and define
\begin{equation}\label{eq:first-corrected-children}
    x_k
    =
    p+a_{n-1}(p)r^{n-1}Q_p^{(1)}(q_k),
    \qquad 1\leq k\leq N.
\end{equation}
Since
\[
    \frac1N\sum_{k=1}^N q_k^2=\frac12,
\]
we have
\[
    \sum_{k=1}^N \phi(q_k)=0
    \qquad\text{and hence}\qquad
    \sum_{k=1}^N Q_p^{(1)}(q_k)=0.
\]
Thus the deformation preserves the barycenter of each family of
children, which is the cancellation needed in the estimate of
$\Delta_3$.

Moreover, provided $|\delta_p|$ is small,
\[
    (Q_p^{(1)})'(t)=1+2\delta_pt
\]
is bounded above and below by positive constants, so
\[
    |Q_p^{(1)}(q_k)-Q_p^{(1)}(q_\ell)|
    \simeq |q_k-q_\ell|.
\]
Hence the geometric separation of the Chebyshev configuration is
preserved.

For a polynomial $P$, define
\begin{equation}\label{defLP}
    (LP)(q_k)
    =
    \sum_{\ell\neq k}
    \frac{P(q_k)-P(q_\ell)}{q_k-q_\ell}.
\end{equation}
The operator $L$ describes the first-order variation of the logarithmic
interaction among siblings.  Indeed, under the perturbation
\[
    q_k\mapsto q_k+\varepsilon P(q_k),
\]
we have
\[
    \left.
    \frac{d}{d\varepsilon}
    \right|_{\varepsilon=0}
    \sum_{\ell\neq k}
    \log
    \left|
        q_k+\varepsilon P(q_k)
        -q_\ell-\varepsilon P(q_\ell)
    \right|
    =
    (LP)(q_k).
\]
For $\phi(t)=t^2-\frac12$, one computes
\[
    L\phi(q_k)=(N-2)q_k.
\]
Thus a quadratic deformation produces, to first order, a linear
profile in the Chebyshev node $q_k$.

We now compare this with the other terms. Set $\rho_p=a_{n-1}(p)r^{n-1}$, and define
\[
    H_p(h)
    =
    N\sum_{\substack{y\in K_{n-1}\\ y\neq p}}
    \bigl(\log|p+h-y|-\log|p-y|\bigr),
\]
so that a simple computation yields $\Delta_2(k)
=
H_p\bigl(\rho_p Q_p^{(1)}(q_k)\bigr)$. Writing
$
    \Lambda_p=\rho_pH_p'(0),
$
the first-order Taylor term is
\[
    \Lambda_p Q_p^{(1)}(q_k)
    =
    \Lambda_p q_k
    +
    \Lambda_p\delta_p\phi(q_k).
\]
Since $|\delta_p|\ll1$, the second term is of the next order.
Thus the leading nonconstant term in $\Delta_2$ is
$
    \Lambda_p q_k.
$

The discrete field estimates proved later in Section~\ref{sec:higher-potential} give
\[
    |\Lambda_p|\lesssim_\eta N^3r.
\]
We therefore choose
\[
    \delta_p=-\frac{\Lambda_p}{N-2},
\]
so that $|\delta_p|\lesssim_\eta N^2r$. In particular, since $r=N^{-1/s}$, we have $|\delta_p|=o(1)$ for every $s<1/2$.

We now estimate the error which remains after the linear cancellation.
Recall that
\[
    \rho_p=a_{n-1}(p)r^{n-1}.
\]
For the sibling term,
\[
    \Delta_1(k)
    =
    (N-1)\log\rho_p
    +
    \sum_{\ell\neq k}
    \log
    \left|
        Q_p^{(1)}(q_k)-Q_p^{(1)}(q_\ell)
    \right|.
\]
Since
\[
    Q_p^{(1)}(q_k)-Q_p^{(1)}(q_\ell)
    =
    (q_k-q_\ell)
    \bigl(1+\delta_p(q_k+q_\ell)\bigr),
\]
we obtain
\[
    \Delta_1(k)
    =
    (N-1)\log\rho_p+b_k
    +
    \sum_{\ell\neq k}
    \log\bigl|1+\delta_p(q_k+q_\ell)\bigr|,
\]
where
\[
    b_k=\sum_{\ell\neq k}\log|q_k-q_\ell|
\]
is the Chebyshev contribution already present in the basic construction.

Since $|\delta_p|\lesssim_\eta N^2r=o(1)$, we have
\[
    |\delta_p(q_k+q_\ell)|\leq 2|\delta_p|\ll1,
\]
and hence we may Taylor expand. Therefore
\[
    \Delta_1(k)
    =
    (N-1)\log\rho_p+b_k
    +(N-2)\delta_p q_k
    +O_\eta(N^5r^2).
\]
Indeed, the quadratic and higher-order terms are bounded by
\[
    O\bigl(N|\delta_p|^2\bigr)
    =
    O_\eta(N^5r^2).
\]

For the other terms, write
\[
    \Delta_2(k)
    =
    N\sum_{\substack{y\in K_{n-1}\\ y\neq p}}
    \log\left|
        1+\frac{x_k-p}{p-y}
    \right|.
\]
The Taylor expansion is justified by the geometric estimates of
Section~\ref{sec:corrected-geometry}. Indeed,
\eqref{eq:relative-scale} and the uniform boundedness of
$Q_p^{(1)}$ give
\[
    \left|\frac{x_k-p}{p-y}\right|
    \lesssim_\eta \tau=o(1),
    \qquad y\in K_{n-1}\setminus\{p\}.
\]
Hence the logarithm may be expanded uniformly.

With
\[
    \Lambda_p
    =
    N\rho_p
    \sum_{\substack{y\in K_{n-1}\\ y\neq p}}
    \frac{1}{p-y},
\]
the first-order Taylor contribution is
\[
    \Lambda_p Q_p^{(1)}(q_k)
    =
    \Lambda_p q_k
    +
    \Lambda_p\delta_p\phi(q_k).
\]
Since
\[
    |\Lambda_p|\lesssim_\eta N^3r,
    \qquad
    |\delta_p|\lesssim_\eta N^2r,
\]
the second term is bounded by
\[
    O_\eta(N^5r^2).
\]
The second-order Taylor contribution is also of size
$O_\eta(N^5r^2)$. Hence
\[
    \Delta_2(k)
    =
    \Lambda_p q_k
    +
    O_\eta(N^5r^2).
\]

We choose
\[
    \delta_p=-\frac{\Lambda_p}{N-2},
\]
so that
\[
    (N-2)\delta_p+\Lambda_p=0.
\]
Thus the leading linear terms in $\Delta_1+\Delta_2$ cancel,
and the remaining nonconstant part of $\Delta_1+\Delta_2$
is of size
$
    O_\eta(N^5r^2).
$

Finally, the quadratic deformation preserves the barycenter of every
family of children and satisfies the geometric hypotheses of
Section~\ref{sec:corrected-geometry}.  The cluster estimate proved
later in Lemma~\ref{lem:cluster-profiles-new}, specialized to this
quadratic deformation, therefore gives
\[
    |\Delta_3|
    \lesssim_\eta
    N^5r^2+N^9r^4.
\]
Consequently, after the quadratic correction the total new error is
of size
\[
    O_\eta(N^5r^2).
\]
Since $N^5r^2=N^{5-2/s}\rightarrow0$ as $N \rightarrow \infty$ whenever $s<2/5$, this first correction extends the construction to
the range $s<2/5$.

\subsection{The second correction: a cubic deformation}

The next error is no longer linear in the child variable. After the
quadratic deformation has been introduced and the linear profile has
been cancelled, the leading nonconstant term is quadratic in $q_k$.
Up to an additive constant, it is proportional to
\[
q_k^2-\frac12.
\]

To cancel this profile, we add a second deformation, now of cubic
degree.
Define
\[
\psi(t)
=
t^3-\frac{2N-3}{2(N-1)}t.
\]
This polynomial has zero average on the Chebyshev nodes,
\[
\sum_{k=1}^N\psi(q_k)=0,
\]
and therefore adding it to the child positions still preserves the
barycenter.

We now set
\[
Q_p^{(2)}(t)
=
t+\delta_p\phi(t)+\varepsilon_p\psi(t),
\]
and define the children by
\begin{equation}\label{eq:second-corrected-children}
x_k
=
p+a_{n-1}(p)r^{n-1}Q_p^{(2)}(q_k).
\end{equation}
The operator $L$ introduced in \eqref{defLP} satisfies
\[
L\psi
=
(N-3)\left(t^2-\frac12\right)
\]
on the Chebyshev nodes.
Thus the cubic deformation produces, to first order, precisely the centered quadratic mode \(t^2-\frac12\). As will be verified in the higher-order estimates below, after the linear profile has been cancelled, the leading nonconstant term of \(\Delta_1+\Delta_2\) is a multiple of this mode.

Denoting the coefficient of this quadratic profile by $\Gamma_p$,
we choose
\[
\varepsilon_p
=
-\frac{\Gamma_p}{N-3}.
\]

The estimates proved later in Section~\ref{sec:higher-potential} give
\[
|\Gamma_p|\lesssim N^5r^2,
\]
and hence
\[
|\varepsilon_p|\lesssim N^4r^2.
\]

After this second correction, the remaining nonconstant error is of size
$O(N^7r^3)$.  Thus the nonconstant profiles are controlled whenever
\[
N^7r^3\to0,
\]
which corresponds to
\[
s<\frac37.
\]
At this stage there may still be parent-dependent constant profiles of
lower order.  These constants are retained, rather than discarded, and
are cancelled in the full renormalization argument of
Section~\ref{sec:finite-renormalization}.

\subsection{What is modified at each generation}

It is useful to summarize the recursive construction. Suppose that $K_{n-1}$ has already been constructed. For each
$p\in K_{n-1}$, the geometry of the previous generations is already
fixed. From this geometry one computes the derivatives of the discrete
external potential at $p$ up to the order needed in the construction;
see Section~\ref{sec:higher-potential}.

One then chooses:

\begin{itemize}
\item a scale factor $a_{n-1}(p)$, which corrects the scalar part of
the potential;
\item a quadratic coefficient $\delta_p$, which corrects the linear
variation across the children;
\item if desired, a cubic coefficient $\varepsilon_p$, which corrects
the quadratic variation;
\item and, in the general construction, further polynomial correction
coefficients which cancel the higher-order profiles.
\end{itemize}
Once these parameters have been chosen, the $N$ children of $p$ are
placed at
\[
p+a_{n-1}(p)r^{n-1}Q_p(q_k),
\]
where $Q_p$ is the corresponding small polynomial deformation of the
identity map, chosen so as to preserve the geometric separation of the
Chebyshev configuration.

The construction involves two nested inductive procedures.
When defining generation $n$, the geometry of $K_{n-1}$ is already
fixed; within the new generation, the polynomial corrections are then
chosen successively according to their order.  At a given order, the
profile to be cancelled depends only on $K_{n-1}$ and on the
lower-order corrections already chosen throughout the same generation.
There is therefore no dependence on corrections of the current or
higher orders.

The examples $s<2/5$ and $s<3/7$ illustrate the general principle:
a polynomial correction of degree $m+1$ is used to cancel the
nonconstant potential profile of degree $m$. The general version of
this mechanism is developed in the next sections.

\section{The polynomial corrections}\label{sec:poly_corrections}

We now formalize the mechanism illustrated in the previous section. In particular, we now fix $s \in (0,1/2)$, define $r$ by $Nr^{s} = 1$ with $N \gg 1$, set $K_0 = \{0\}$, and assume that $K_{m}$ is constructed from $K_{m-1}$ by replacing each $z \in K_{m-1}$ by its $N$ children
\begin{align*}
	z_k = z + a_{m-1}(z) r^{m-1} Q_z(q_k), \qquad 1 \le k \le N,
\end{align*}
where $m$ ranges over $1 \le m \le n$. We shall soon narrow down some restrictions on the size of the $a_{m-1}$ and the deformations $Q_z$ (in fact, each $Q_z$ is a polynomial), but the basic observation is that for $x \in K_n$, the three terms in the difference $g_{n}(x) - g_{n-1}(\hat{x})$
\[
\Delta_1,\qquad \Delta_2,\qquad \Delta_3
\]
as defined in \eqref{eqn:delta} play distinct roles in the oscillation error for $g_n(x) - g_{n-1}(\hat{x})$.

The term $\Delta_2$ measures the change of the potential
when the point of evaluation moves from a parent $p$ to one of its
children $x$. The term $\Delta_3$ measures the error of the potential at $x$ made by replacing a parent $q \in K_{n-1}$, $q \ne \hat{x}$, by its $N$ children. After Taylor expansion, $\Delta_2$ and $\Delta_3$ produce a polynomial
errors in the variable $q_k$, depending on  the polynomial deformations $Q_z$.

The term $\Delta_1$ is the contribution of the potential coming from the interaction between a point $x$ and its siblings. By changing the relative position of the children, it is this term which serves to correct the errors induced by $\Delta_2$ and $\Delta_3$. In particular, the  perturbations of the nodes $q_k$ will be chosen precisely so that the polynomial variation they produce in $\Delta_1$ cancels the lower order polynomial errors of $\Delta_2$ and $\Delta_3$.

In what follows, we first introduce the polynomial correction operator $L$, and show how it appears naturally in the linearization of $\Delta_1$ (Section \ref{sec:polynomial-operator}). Assuming the sets $K_1, \dotsc K_n$ have been constructed with natural size restrictions on the $a_{m-1}(z)$ and the deformations $Q_z(q_k)$, we collect estimates on how the errors $\Delta_1, \Delta_2$, and $\Delta_3$ depend on the polynomial perturbations $Q_p$. The main result here is Lemma \ref{lem:higher-basic-error}.

\subsection{The polynomial correction operator, $L$}
\label{sec:polynomial-operator}

Let
\[
q_1,\ldots,q_N
\]
be the Chebyshev nodes. If $P$ is a polynomial, we initially define
$LP$ as a function on these nodes by
\begin{equation}\label{eq:def-L}
	(LP)(q_k)
	:=
	\sum_{\substack{\ell=1\\ \ell\ne k}}^N
	\frac{P(q_k)-P(q_\ell)}{q_k-q_\ell},
	\qquad 1\le k\le N.
\end{equation}
The reason for introducing this operator is that it is the
linearization of the ``sibling'' logarithmic potential, i.e., the term  $\Delta_1$. Indeed, suppose we perturb the nodes $q_k$ by 
\[
q_k\longmapsto q_k+\varepsilon P(q_k).
\]
The logarithmic potential at the $k$-th node, $q_k$, induced by its siblings
is
\[
\Phi_k(\varepsilon)
=
\sum_{\ell\ne k}
\log\left|
q_k+\varepsilon P(q_k)
-q_\ell-\varepsilon P(q_\ell)
\right|.
\]
Differentiating at $\varepsilon=0$, we obtain
\[
\Phi_k'(0)
=
\sum_{\ell\ne k}
\frac{P(q_k)-P(q_\ell)}{q_k-q_\ell}
=
(LP)(q_k).
\]
Thus, to first order, we have
\begin{align*}
	\Phi_k(\epsilon) \approx \sum_{\ell \ne k} \log \abs{q_k - q_\ell} + \epsilon (LP)(q_k).
\end{align*}
We shall soon see that we can expand oscillations coming from $\Delta_2$ and $\Delta_3$ as a polynomial $R(q_k)$ in the Chebyshev nodes $q_k$. Our main goal is then to carefully choose $P$ and $\epsilon$ so as to cancel these $R(q_k)$, while still maintaining the zeroth term in the expansion of $\Phi_k(\epsilon)$ as above. This will require us to have very precise estimates on how small polynomial perturbations of the nodes $q_k$ affect the geometry of the set, and the error terms $\Delta_i$, which are forthcoming. First, though, we need the following facts about the operator $L$. 

The crucial \textit{algebraic} property of $L$ is that it lowers polynomial
degree by one. For example,
\[
L(t^2-\tfrac12)=(N-2)t,
\]
which is the identity used in the quadratic correction, and the cubic
polynomial introduced in Section~\ref{sec:first-corrections} satisfies
\[
L\psi=(N-3)(t^2-\tfrac12).
\]
More generally, if $d<N$, then
\begin{equation}\label{eq:L-triangular}
	L(t^d)
	=
	(N-d)t^{d-1}
	+\sum_{\nu=1}^{d-1} S_\nu t^{d-1-\nu},\qquad S_\nu = \sum_{\substack{\ell=1}}^Nq_\ell^\nu.
\end{equation}
Indeed,
\begin{equation}\label{eq:L-triangular22}
	\frac{x^d-y^d}{x-y}
	=
	\sum_{\nu=0}^{d-1}x^{d-1-\nu}y^\nu,
\end{equation}
and thus
$$L(t^d)(q_k) =
\sum_{\substack{\ell=1\\ \ell\ne k}}^N \sum_{\nu=0}^{d-1}q_k^{d-1-\nu}q_\ell^\nu = 
\sum_{\nu=0}^{d-1}q_k^{d-1-\nu} (S_\nu - q_k^\nu) = (N-d) q_k^{d-1} + \sum_{\nu=1}^{d-1} S_\nu q_k^{d-1-\nu},
$$
which proves 
\eqref{eq:L-triangular}. Notice also that $|S_\nu|\leq N$.

Consequently, if $N>M+1$, then for every $m\le M$ the operator
$$
L:\mathcal P_{m+1}/\mathbb R\longrightarrow\mathcal P_m
$$
is an isomorphism. Here $\mathcal P_m$ stands for the space of polynomials of degree at most $m$. Moreover, since $M$ is fixed, its inverse satisfies
a quantitative estimate. Indeed, if
$$
R(t)=\sum_{j=0}^m a_jt^j,\qquad
P(t)=\sum_{j=1}^{m+1}b_jt^j,
\qquad LP=R,
$$
then comparison of the coefficients from the highest degree downwards,
using \eqref{eq:L-triangular} and the fact that all the lower-order
coefficients of $L(t^d)$ are $O_M(N)$, gives, for $N\ge2(M+1)$,
$$
\|P\|_{\mathrm{coeff}}
\le
\frac{C_M}{N}\|R\|_{\mathrm{coeff}},
$$
where
$
\|\cdot\|_{\mathrm{coeff}}$ stands for the $\ell^\infty$ norm of the coefficients.
Since $\deg P\le M+1$,
$$
\|P\|_{C^1([-1,1])}
\lesssim_M\|P\|_{\mathrm{coeff}},
$$
and therefore
\begin{equation}\label{eq:L-inverse}
	\|P\|_{C^1([-1,1])}
	\le
	\frac{C_M}{N}\|R\|_{\mathrm{coeff}}.
\end{equation}

Since $L$ annihilates constants, we may finally replace $P$ by $
P-\frac1N\sum_{k=1}^NP(q_k)$
and thereby impose that the inverse $LP = R$ satisfies the cancelation condition
$$
\sum_{k=1}^NP(q_k)=0.
$$
The constant that is subtracted has absolute value at most
$\|P\|_{L^\infty([-1,1])}$, so \eqref{eq:L-inverse} remains valid,
up to changing $C_M$. Since also
$$
\sum_{k=1}^Nq_k=0,
$$
this normalization ensures that the deformation
$q_k\mapsto q_k+P(q_k)$ preserves the barycenter of the children.
This property will be important in the estimates for $\Delta_3$. We summarize the aforementioned properties in the following lemma:

\begin{lemma}\label{lem:L_prop}
	Let $ M \in \N$ and let $N \ge 2M+1$. For each $0 \le m \le M$, the operator 
	\begin{align*}
		L: \mathcal{P}_{m+1}/\R \rightarrow \mathcal{P}_m 
	\end{align*}
	is an isomorphism for which $\deg( LP) \le \deg(P) - 1$, and for which the inverse $L(P) = L(P + \mathrm{const}) = R$ for $R \in \mathcal{P}_m$ satisfies
	\begin{align*}
		\| P - N^{-1}\sum_{k=1}^N P(q_k)  \|_{C^1([-1,1])} \le \frac{C_M}{N} \| R \|_{\mathrm{coeff}}.
	\end{align*}
\end{lemma}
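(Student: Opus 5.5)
We prove Lemma \ref{lem:L_prop} by working in the monomial basis and exploiting the triangular structure \eqref{eq:L-triangular}.

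\emph{Well-definedness and degree.} We first regard $L$ as a map on polynomials, not merely on functions on the nodes. By \eqref{eq:L-triangular22}, for each $d\ge1$ the expression $L(t^d)$ is the polynomial $(N-d)t^{d-1}+\sum_{\nu=1}^{d-1}S_\nu t^{d-1-\nu}$, of degree at most $d-1$. Also $L(1)=0$. Extending by linearity, $L$ maps $\mathcal P_{m+1}$ into $\mathcal P_m$, kills constants, and satisfies $\deg(LP)\le\deg P-1$. Since $N\ge 2M+1>m+1$, evaluation at the $N$ distinct nodes is injective on $\mathcal P_m$, so identifying $LP$ with its values on the nodes is harmless.

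\emph{Isomorphism.} Use the bases $\{t,\dots,t^{m+1}\}$ for $\mathcal P_{m+1}/\R$ and $\{1,\dots,t^m\}$ for $\mathcal P_m$. Send $t^d$ to position $d-1$. In these bases the matrix of $L$ is upper triangular, with diagonal entries $N-d$ for $1\le d\le m+1$. These entries are at least $N-M-1\ge N/2$, since $N\ge 2M+1$ (and hence $N\ge 2(M+1)$ up to a harmless adjustment; if needed, assume $N\ge 2M+2$). Both spaces have dimension $m+1$, so $L$ is an isomorphism.

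\emph{Quantitative inverse.} Write $LP=R=\sum_{j\le m}a_jt^j$ with $P=\sum_{d=1}^{m+1}b_dt^d$. Comparing coefficients of $t^{j}$ gives
\[
(N-j-1)\,b_{j+1}=a_j-\sum_{d>j+1}S_{d-1-j}\,b_d .
\]
Since $|S_\nu|\le N$, back substitution from $j=m$ downward yields, by induction,
\[
|b_{j+1}|\le \frac{2}{N}\Bigl(|a_j|+N\sum_{d>j+1}|b_d|\Bigr)\le \frac{C_M}{N}\|R\|_{\mathrm{coeff}} .
\]
Each step multiplies the previous bound by at most a constant depending on $M$, and there are at most $M+1$ steps. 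This is the only point needing care: the off-diagonal entries are of size $N$, the same order as the diagonal, so there is no smallness. The accumulation is still finite because $M$ is fixed, which is why $C_M$ grows with $M$ but stays independent of $N$.

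\emph{Passing to $C^1$ norms.} Since $\deg P\le M+1$,
\[
\|P\|_{C^1([-1,1])}\le (M+1)(M+2)\,\|P\|_{\mathrm{coeff}} .
\]
Subtracting the constant $c=N^{-1}\sum_k P(q_k)$ leaves $P'$ unchanged, because $|c|\le\|P\|_{L^\infty}$. It raises the sup norm by at most a factor of $2$. This gives the stated bound for $P-N^{-1}\sum_kP(q_k)$.
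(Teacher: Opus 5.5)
Your proposal is correct and follows essentially the same route as the paper: the triangular form \eqref{eq:L-triangular} of $L$ in monomial bases, back-substitution from the top degree using $|S_\nu|\le N$ to get $\|P\|_{\mathrm{coeff}}\le C_M N^{-1}\|R\|_{\mathrm{coeff}}$, then the comparison $\|P\|_{C^1([-1,1])}\lesssim_M\|P\|_{\mathrm{coeff}}$ and subtraction of the node average, which changes neither $LP$ nor $P'$. The one slip is that $N\ge 2M+1$ does not give $N-M-1\ge N/2$, but you need not strengthen the hypothesis, since it already gives $N-M-1\ge (N-1)/2\ge N/3$ (for $M\ge 1$), so the back-substitution goes through with $3/N$ in place of $2/N$.
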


\subsection{Estimates on the previous generations, $K_1$, $\dotsc$, $K_{n-1}$}
\label{sec:corrected-geometry}

Fix an integer $M\ge2$, and a small parameter $\eta > 0$. Throughout the rest of the paper we write
\[
    \tau=N^2r,
\]
which is the natural scaling parameter for many of the estimates that follow; for $N \gg 1$, $\tau$ is very small since $Nr^s = 1$ and $ s< 1/2$. All constants implicit in an inequality like $\lesssim_M$ depend only on $M$, while those
implicit in $\lesssim_{M,\eta}$ depend only on $M$ and the fixed
parameter $\eta$.  In particular, these constants are independent of
$N$ and of the generation $n$. There are two iterative procedures in our construction: the induction on
the generation $n$, and the induction on the finite-order polynomial correction scheme.  This subsection
concerns only the first one. 

For every $0\le m\le n-2$ and every $z\in K_m$, assume that the children of $z$
have already been fixed and are of the form
\begin{equation}
\label{eq:old-children}
    z_k=z+\rho_z Q_z(q_k),
    \qquad 1\le k\le N,
\end{equation}
where for convenience, we introduce the notation
\[
    \rho_z=a_m(z)r^m.
\]
We assume that maps $Q_z$ are the deformations that have already been chosen in the previous
generations, and we merely assume the following quantitative information about $Q_z$. First, they are quantitatively close to the identity in the sense that $Q_z \in C^1([-1,1])$ with 
\begin{equation}
\label{eq:Q-close-id}
    \|Q_z'-1\|_{L^\infty([-1,1])}
    \lesssim_M\tau.
\end{equation}
In particular, for $\tau$ sufficiently small they are uniformly
bi-Lipschitz:
\begin{equation}
\label{eq:Q-bilip}
    |u-v|/2
    \le
    |Q_z(u)-Q_z(v)|
    \le
    2|u-v|,
    \qquad
    u,v\in[-1,1].
\end{equation}
Second, they preserve the barycenter of the Chebyshev configuration:
\begin{equation}
\label{eq:Q-barycenter}
    \sum_{k=1}^N Q_z(q_k)=0.
\end{equation}
Notice that \eqref{eq:Q-close-id} and
\eqref{eq:Q-barycenter} for $\tau$ sufficiently small also imply
\begin{equation}
\label{eq:Q-node-bound}
    \| Q_z\|_{L^\infty([-1,1])} \le 2.
\end{equation}
We also assume natural bounds on the scales $a_m(z)$ coming from the previous generations.  If $z\in K_m$, $1\le m\le n-2$, and $z$ has index $j$, then we assume $a_{m}(z)$ has been  chosen so that 
\begin{equation}
\label{eq:old-scale-bound}
    1 \le  a_m(z)
    \lesssim_{M,\eta}N\sin\theta_j.
\end{equation}

The properties \eqref{eq:Q-close-id} and \eqref{eq:Q-barycenter}
will later follow from the size and normalization estimates for the
correction polynomials we choose in an iterative procedure; \eqref{eq:Q-bilip} and \eqref{eq:Q-node-bound} are then immediate consequences of these estimates. With just the estimates above, we can prove the following ``basic'' estimates for the sets $K_m$ with $m \le n-1$. 

\begin{lemma}\label{lem:prev_estimates}
	Suppose $K_m$ has been constructed for $0 \le m \le n-1$ as above, i.e., so that the children of $K_{m-1}$ all satisfy \eqref{eq:old-children}, \eqref{eq:Q-close-id}, and \eqref{eq:Q-barycenter}, and so that the $a_{m}(z)$ satisfy \eqref{eq:old-scale-bound}. 
	
	Then as long as $N$ is sufficiently large, depending just on $s$, $M$, and $\eta$, we have that for all $p \ne y$, $p, y\in K_{n-1}$, 
	\begin{equation}
		\label{eq:relative-scale}
		\frac{\rho_p+\rho_y}{|p-y|}
		\lesssim_{M,\eta}\tau,
	\end{equation}
	and moreover, for every
	integer $d\ge2$ and $0\le a\le d$,
	\begin{equation}
		\label{eq:geometric-sum}
		\sum_{\substack{y\in K_{n-1}\\y\neq p}}
		\frac{\rho_p^{\,d-a}\rho_y^{\,a}}
		{|p-y|^d}
		\lesssim_{d,M,\eta}
		\tau^d.
	\end{equation}
\end{lemma}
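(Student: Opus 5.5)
The plan is to reduce everything to the geometric information of Lemma~\ref{lem:ahlf_reg}, applied with $C_1 \simeq_{M,\eta} 1$. Its hypotheses hold: \eqref{eq:Q-bilip} gives the bi-Lipschitz condition on the $Q_z$, \eqref{eq:Q-node-bound} gives $Q_z:[-1,1]\to[-2,2]$, and \eqref{eq:old-scale-bound} gives the bounds on $a_m$. So for $N$ large depending on $s,M,\eta$, every $y\in\mathcal Y_\ell(p)$, $1\le \ell\le n-1$, satisfies $|p-y|\gtrsim N^{-2}r^{n-1-\ell}$ and $\#\mathcal Y_\ell(p)=(N-1)N^{\ell-1}$. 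We also have the crude scale bound $\rho_z=a_{n-1}(z)r^{n-1}\lesssim_{M,\eta} N r^{n-1}$ for $z\in K_{n-1}$. I read $\rho_p$ for $p\in K_{n-1}$ as referring to the scale $a_{n-1}(p)$ satisfying the same bound \eqref{eq:old-scale-bound}, which is the situation in which the lemma is applied.

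\textbf{Proof of \eqref{eq:relative-scale}.} For $\ell\ge2$ this is immediate: $(\rho_p+\rho_y)/|p-y|\lesssim N r^{n-1}\cdot N^2 r^{-(n-1-\ell)} = N^3 r^\ell\le N^3r^2\le \tau$, since $Nr\le 1$.

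The case $\ell=1$ (siblings) is the sharp one, and I expect it to be the main obstacle. The crude bound only gives $N^3 r$, which is too weak when $s\ge 1/3$, so the refined sine-weighted bounds must be used. Write $p,y$ as children of $q\in K_{n-2}$ with indices $j,k$. By \eqref{eq:Q-bilip} and the estimate in the proof of Lemma~\ref{lem:weighted},
\[
|p-y|\ge \tfrac12 a_{n-2}(q)r^{n-2}|q_j-q_k| \gtrsim r^{n-2}\max\{\sin\theta_j,\sin\theta_k\}\,|j-k|/N .
\]
Meanwhile $\rho_p+\rho_y\lesssim N r^{n-1}(\sin\theta_j+\sin\theta_k)$ by \eqref{eq:old-scale-bound}. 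Taking the ratio gives $\lesssim N^2 r=\tau$, as required.

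\textbf{Proof of \eqref{eq:geometric-sum}.} I split the sum over $\ell$ exactly as in Lemma~\ref{lem:basic-error}. For $\ell\ge2$, each term is at most $(\rho_p+\rho_y)^d/|p-y|^d\lesssim (N^3r^\ell)^d$, so
\[
\sum_{\ell\ge2} N^\ell (N^3 r^\ell)^d \lesssim N^{3d}\sum_{\ell\ge 2}(Nr^d)^\ell\lesssim N^{3d+2}r^{2d} = \tau^d\,(N^{d+2}r^d).
\]
This is $\le \tau^d$ because $Nr^s=1$ with $s<1/2\le d/(d+2)$ for $d\ge2$. Indeed $N^{d+2}r^d=N^{d+2-d/s}$, and $d/s>2d\ge d+2$.

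For $\ell=1$, with $j$ the index of $p$, I bound
\[
\rho_p^{d-a}\rho_y^a\lesssim (Nr^{n-1})^d\sin^{d-a}\theta_j\,\sin^a\theta_k
\quad\text{and}\quad
|p-y|\gtrsim r^{n-2}|q_j-q_k| .
\]
This gives a bound $N^d r^d\sin^{d-a}\theta_j\sum_{k\ne j}\sin^a\theta_k/|q_j-q_k|^d$. Lemma~\ref{lem:weighted} with $m=d$ bounds the sum by $N^d/\sin^{d-a}\theta_j$. The sine factors cancel, leaving $N^{2d}r^d=\tau^d$. Adding the two ranges proves \eqref{eq:geometric-sum}.

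The only delicate points are the sibling cases, where the sine weights must be tracked to avoid losing powers of $N$, and checking that the geometric-series exponent $N^{d+2}r^d$ is small, which uses $s<1/2$.
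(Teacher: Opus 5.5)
Your proof is correct and follows the paper's argument essentially line by line. You use the same split of $K_{n-1}\setminus\{p\}$ into siblings and $\mathcal{Y}_\ell(p)$ with $\ell\ge2$, with sine-weighted sibling bounds via Lemma~\ref{lem:weighted}, and the far generations are handled by Lemma~\ref{lem:ahlf_reg} together with the geometric series $N^{3d}\sum_{\ell\ge2}(Nr^d)^\ell\lesssim N^{3d+2}r^{2d}\le\tau^d$. Your reading that $a_{n-1}(p)\lesssim_{M,\eta} N\sin\theta_{j(p)}$ (i.e.\ \eqref{eq:current-scale-bound}) holds is exactly the implicit assumption the paper's proof also makes.
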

\begin{proof}
	We start with the proof of \eqref{eq:relative-scale}. If $p, y \in K_{n-1}$ are siblings with indices $j$ and $\ell$ respectively, we use the estimate on $a_{n-1}(p) + a_{n-1}(y) \lesssim_{M, \eta} N (\sin \theta_j + \sin \theta_\ell)$ and obtain
	\begin{align*}
		\frac{\rho_p + \rho_y}{ \abs{p-y}  } \lesssim_{M, \eta} \frac{ N  (\sin \theta_j + \sin \theta_\ell ) r^{n-1} }{ \abs{Q_q(q_j) - Q_q(q_\ell)}  r^{n-2} } \lesssim_{M, \eta} \frac{N r (\sin \theta_j + \sin \theta_\ell ) }{ \abs{q_j-q_\ell}  } \lesssim_{M, \eta} \tau,
	\end{align*}
	where the last inequality follows from \eqref{eqTX23} in Lemma \ref{lem:cheb}. If instead $y \in \mathcal{Y}_\ell(p)$ for some $\ell \ge 2$, then for $N$ large enough depending on $M$ and $\eta$, we conclude using Lemma \ref{lem:ahlf_reg} that 
	\begin{align*}
		\frac{  \rho_p + \rho_y  }{ \abs{p-y}  } \lesssim_{M, \eta} \frac{  N r^{n-1}  }{ N^{-2}  r^{n-1-\ell} } \lesssim_{M, \eta} N^3 r^2 = (N r) \tau \ll \tau,
	\end{align*}
	for $N$ large. This completes the proof of the first claim.
	
	As for the second claim, suppose again that $p$ has index $j$ and $\hat{p} = q \in K_{n-2}$. We split the sum in \eqref{eq:geometric-sum} by $y \in \mathcal{Y}_\ell(p)$ for $1 \le \ell \le n-1$. The set of  $y \in \mathcal{Y}_1(p)$ are indexed by $y_i=q+\rho_qQ_q(q_i)$ for $i \ne j$, and for such $y_i$ we have that 
	\begin{align*}
		\abs{p - y_i} = \rho_q \abs{ Q_q(q_j) - Q_q(q_i)} \simeq \rho_q \abs{q_i - q_j},
	\end{align*}
	since for $\tau$ small, $Q_q$ is bi-Lipschitz. In particular, we see that 
	\begin{align*}
		\sum_{y \in \mathcal{Y}_1(p)} \frac{ \rho_p^{d-a} \rho_y^{a}  }{\abs{p -y }^d} \lesssim_{d, M, \eta} (Nr)^d \sin^{d-a} \theta_j \sum_{i \ne j} \frac{ \sin^a \theta_i }{ \abs{q_j - q_i}^d } \lesssim_{d, M, \eta} (N r)^d N^d \lesssim_{d, M, \eta} \tau^d 
	\end{align*}
	by virtue of Lemma \ref{lem:weighted}.
	
	On the other hand, for $y \in \mathcal{Y}_\ell(p)$ for $\ell \ge 2$ we apply Lemma \ref{lem:ahlf_reg} for $N$ large to see that $\abs{y-p} \gtrsim_{M, \eta}  N^{-2} r^{n-1-\ell}$, and since $\# \mathcal{Y}_\ell(p) \le N^\ell$, we have 
	\begin{align*}
		\sum_{\ell = 2}^{n-1} \sum_{y \in \mathcal{Y}_\ell(p)} \frac{ \rho_p^{d-a} \rho_y^a  }{ \abs{p-y}^d } \lesssim_{d, M, \eta} N^{3d} \sum_{\ell =2}^{n-1} (N r^d )^\ell   \lesssim_{d, M, \eta} N^{3d + 2} r^{2d} = N^{-d+2} \tau^{2d} \le \tau^d,
	\end{align*}
	since $d \ge 2$ and $\tau \in (0,1)$ for $N$ sufficiently large. This completes the proof.	 
\end{proof}

\subsection{Polynomial profiles of the three error terms}
\label{sec:Taylor-profiles}

We now analyze precisely the difference in $g_n(x) - g_{n-1}(\hat{x})$ (just as in Section \ref{sec:basic-construction}) provided that $K_n$ is constructed from $K_{n-1}$ via small polynomial perturbations of the Chebyshev nodes. More precisely, we assume that $K_m$ have been constructed for $0 \le m \le n-1$ as in Section \ref{sec:corrected-geometry}, and suppose that we construct $K_n$ from $K_{n-1}$ as follows.

First, suppose that for each $p \in K_{n-1}$ we have a scaling factor 
\begin{equation}
	\label{eq:current-scale-bound}
	1 \le  a_{n-1}(p)
	\lesssim_{M,\eta}N\sin\theta_{j(p)},
\end{equation}
where $j(p)$ is the index of $p$. Assume also that for each $p \in K_{n-1}$, we have a polynomial
\begin{equation}
	\label{eq:new-deformation}
	Q_p(t)
	=
	t+\sum_{\nu=1}^{M-1}P_{\nu+1,p}(t),
\end{equation}
where, for $1\le\nu\le M-1$, $P_{\nu +1 , p}$ is a polynomial satisfying 
\begin{equation}
	\label{eq:new-correction-bounds}
	\deg P_{\nu+1,p}\le \nu+1,
	\qquad
	\|P_{\nu+1,p}\|_{C^1([-1,1])}
	\lesssim_M\tau^\nu,
\end{equation}
and
\begin{equation}
	\label{eq:new-correction-normalization}
	\sum_{k=1}^N P_{\nu+1,p}(q_k)=0.
\end{equation}
Writing for convenience 
\[
\rho_p=a_{n-1}(p)r^{n-1},
\]
we construct $K_n$ by replacing each $p \in K_{n-1}$ by its $N$ children, given by the relative positions
\begin{equation}
	\label{eq:new-children}
	x_k=p+\rho_pQ_p(q_k),
	\qquad 1\le k\le N.
\end{equation}

As in Section \ref{sec:basic-construction}, for $x = x_k$ as in \eqref{eq:new-children}, we write the difference 
\begin{align*}
	g_{n}(x) - g_{n-1}(\hat{x}) = g_{n}(x_k) - g_{n-1}(p) = N^{-n} \left( \Delta_1(k) + \Delta_2(k) + \Delta_3(k) \right),
\end{align*}
where $\Delta_i(k)$ are defined exactly as in \eqref{eqn:delta}, and we emphasize the index $k$ of $x$ in our notation. In the following estimates, the polynomials
$P_{2,p},\ldots,P_{M,p}$ are still arbitrary, subject only to
\eqref{eq:new-correction-bounds} and
\eqref{eq:new-correction-normalization}.  Our purpose is to identify
the polynomial profiles in the variable $q_k$ produced by the deformations in $Q_p$ in each of the terms
\[
\Delta_1,\qquad \Delta_2,\qquad \Delta_3,
\]
by order in $\tau$.  The triangular dependence of these profiles
on the polynomials will later allow us to inductively choose $Q_p$ carefully so that there are no errors which are of low order in $\tau$; see, for example, the properties of the $A_{m,p}$ in Lemma \ref{lem:Delta1-profile}.

Finally, we remark that our analysis of the term $\Delta_2$ will require us to have some precise estimates on discrete potentials of the form
\begin{align*}
	\Lambda_{n-1, d}(p) = c_{N, p} \sum_{ y \in K_{n-1}, y \ne p } \frac{1}{(p-y)^d}.
\end{align*} 
These estimates, while not particularly difficult, are long, and so for the sake of readability, we push them to the later Section \ref{sec:higher-potential}.

\textit{In all of the lemmas of this subsection, we make the standing assumptions on the constructions of $K_1, \dotsc, K_n$, i.e., that \eqref{eq:old-children}-\eqref{eq:old-scale-bound} and \eqref{eq:current-scale-bound}-\eqref{eq:new-children} hold}.
Let us begin with the first term, $\Delta_1 = \Delta_1(k)$. Recall that for a polynomial $P$, $LP$ is defined by
\[
(LP)(q_k)
=
\sum_{\ell\ne k}
\frac{P(q_k)-P(q_\ell)}{q_k-q_\ell}.
\]

\begin{lemma}[Expansion of $\Delta_1$]
\label{lem:Delta1-profile}
As long as $N$ is chosen sufficiently large depending on $s$ and $M$, we have 
\begin{equation}\label{eq:Delta1-profile-expansion}
\Delta_1(k)
=
C_{1,p}+\beta_k
+
\sum_{m=1}^{M-1}
\left(
LP_{m+1,p}(q_k)+A_{m,p}(q_k)
\right)
+
E_{1,M,p}(q_k),
\end{equation}
where
\[
C_{1,p}=(N-1)\log\rho_p+b_*,
\]
$A_{1,p} = 0$, and $A_{m,p}\in\mathcal P_m$ is a polynomial of degree at most $m$ for $2 \le m \le M-1$. Moreover, we have the bounds
\[
\|A_{m,p}\|_{\mathrm{coeff}}
\lesssim_M N\tau^m, \qquad |E_{1,M,p}(q_k)|\lesssim_M N\tau^M,
\qquad 1\le k\le N,
\]
and each $A_{m,p}$ is determined uniquely by $P_{2,p}, \dotsc, P_{m,p}$. 
\end{lemma}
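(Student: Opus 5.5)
The plan is to expand the logarithm of the divided differences of $Q_p$ and organize the resulting series by homogeneity in the correction polynomials. Since the siblings of $x_k$ are $x_\ell=p+\rho_pQ_p(q_\ell)$, we have
\[
\Delta_1(k)=(N-1)\log\rho_p+\sum_{\ell\ne k}\log|q_k-q_\ell|+\sum_{\ell\ne k}\log\bigl(1+D(q_k,q_\ell)\bigr),
\]
where
\[
D(u,v)=\sum_{\nu=1}^{M-1}\frac{P_{\nu+1,p}(u)-P_{\nu+1,p}(v)}{u-v}.
\]
The middle sum is $b_k=b_*+\beta_k$, which accounts for $C_{1,p}+\beta_k$. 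By the mean value theorem and \eqref{eq:new-correction-bounds}, each divided difference $D_\nu(u,v)$ of $P_{\nu+1,p}$ is a polynomial in $(u,v)$ of degree at most $\nu$, symmetric, and bounded by $C_M\tau^\nu$ on $[-1,1]^2$. Hence $|D|\lesssim_M\tau\ll1$, and we may expand
\[
\log(1+D)=\sum_{j=1}^{J}\frac{(-1)^{j+1}}{j}D^j+O\bigl(|D|^{J+1}\bigr),
\]
taking $J=M-1$. The remainder contributes at most $N\,C_M\tau^{M}$ after summing over $\ell$, so it goes into $E_{1,M,p}$.

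Next we expand $D^j=\sum D_{\nu_1}\cdots D_{\nu_j}$ and group the products by the total weight $m=\nu_1+\dots+\nu_j$. Each product of weight $m$ is bounded by $C_M\tau^m$. Terms with $m\ge M$ are placed into $E_{1,M,p}$, costing at most $N\,C_M\tau^M$ since there are $O_M(1)$ of them.

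For $m\le M-1$, the $j=1$ term of weight $m$ is $\sum_{\ell\ne k}D_m(q_k,q_\ell)=LP_{m+1,p}(q_k)$, by the definition \eqref{eq:def-L}. The $j\ge2$ terms of weight $m$ involve only $D_{\nu}$ with $\nu\le m-1$, that is, only $P_{2,p},\dots,P_{m,p}$. Summing these over $\ell$ defines $A_{m,p}(q_k)$. In particular $A_{1,p}=0$, since weight $1$ forces $j=1$. This gives the triangular dependence claimed in the lemma.

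The main obstacle is showing that each $A_{m,p}$ is indeed a polynomial of degree at most $m$ in $q_k$, with coefficient norm $\lesssim_M N\tau^m$. Take a product $F(u,v)=D_{\nu_1}\cdots D_{\nu_j}$. It is a polynomial of total degree at most $m$ whose coefficients are bounded by $C_M\tau^m$, because the coefficients of each $D_\nu$ are controlled by $\|P_{\nu+1,p}\|_{\mathrm{coeff}}\lesssim_M\tau^\nu$ (norms are equivalent on the finite-dimensional space $\mathcal P_{M}$). Write $F(u,v)=\sum_{a+b\le m}c_{ab}u^av^b$. Then
\[
\sum_{\ell\ne k}F(q_k,q_\ell)=\sum_{a,b}c_{ab}\,q_k^a\bigl(S_b-q_k^b\bigr),
\]
with $S_b$ as in \eqref{eq:L-triangular} and $|S_b|\le N$. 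This is a polynomial in $q_k$ of degree at most $m$ with coefficients $O_M(N\tau^m)$, exactly as in the derivation of \eqref{eq:L-triangular}. Summing over the $O_M(1)$ products of each weight gives the bound on $\|A_{m,p}\|_{\mathrm{coeff}}$. The polynomial $A_{m,p}$ is then uniquely determined as this explicit formula in $P_{2,p},\dots,P_{m,p}$.

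Finally, $N$ must be large depending on $s$ and $M$ so that $\tau=N^{2-1/s}$ is small enough for the expansion of the logarithm to be valid.
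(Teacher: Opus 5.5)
Your proposal is correct and follows the paper's proof essentially step by step. Like the paper, you factor out $(N-1)\log\rho_p+b_*+\beta_k$, Taylor expand $\log(1+D)$ to order $M-1$, and group the products $D_{\nu_1}\cdots D_{\nu_j}$ by total weight $m$. You identify the $j=1$ term as $LP_{m+1,p}(q_k)$, take the $j\ge2$ terms (which only involve $\nu_i\le m-1$) as $A_{m,p}$, and use $\sum_{\ell\ne k}q_\ell^b=S_b-q_k^b$ for the degree count. The only cosmetic difference is how $\|A_{m,p}\|_{\mathrm{coeff}}$ is controlled: you track coefficients directly, via norm equivalence on $\mathcal P_M$ for each $P_{\nu+1,p}$, whereas the paper bounds the values by $N\tau^m$ and converts to coefficient norms at the end; both work.
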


\begin{proof}
By \eqref{eq:new-children}, the definition of $x= x_k$, we see that
\[
\Delta_1(k)
=
(N-1)\log\rho_p
+
\sum_{\ell\ne k}
\log|Q_p(q_k)-Q_p(q_\ell)|.
\]
Write
\[
Q_p(q_k)-Q_p(q_\ell)
=
(q_k-q_\ell)
\left(
1+\sum_{\nu=1}^{M-1}D_{\nu,p}(q_k,q_\ell)
\right),
\]
where
\[
D_{\nu,p}(q_k,q_\ell)
=
\frac{P_{\nu+1,p}(q_k)-P_{\nu+1,p}(q_\ell)}
     {q_k-q_\ell}.
\]
Then we have
\begin{align}\label{eqDelta1k}
\Delta_1(k)
& =
(N-1)\log\rho_p
+\sum_{\ell\ne k}\log|q_k-q_\ell| + \sum_{\ell\ne k} \log\left|
1+\sum_{\nu=1}^{M-1}D_{\nu,p}(q_k,q_\ell)
\right|\nonumber\\
& = (N-1)\log\rho_p  + b_*+ \beta_k+ \sum_{\ell\ne k} \log\left|
1+\sum_{\nu=1}^{M-1}D_{\nu,p}(q_k,q_\ell)
\right|,
\end{align}
where $\beta_k$ is as in \eqref{eqn:bj}.
We now consider the last term in \eqref{eqDelta1k}. The
polynomial $D_{\nu,p}$ has total degree at most $\nu$ in
$(q_k,q_\ell)$. Moreover, by the mean value theorem and
\eqref{eq:new-correction-bounds},
\[
|D_{\nu,p}(q_k,q_\ell)|
\le
\|P_{\nu+1,p}'\|_\infty
\lesssim_M\tau^\nu.
\]
Consequently,
\[
\left|
\sum_{\nu=1}^{M-1}D_{\nu,p}(q_k,q_\ell)
\right|
\lesssim_M\tau < 1/2
\]
if $\tau$ is sufficiently small (i.e., if $N$ is sufficiently large), so we are justified in expanding the $\log$ above.

To ease notation, write $D_\nu=D_{\nu,p}(q_k,q_\ell)$. 
Then
\begin{align}
\log \abs {1+\sum_{\nu=1}^{M-1}D_\nu }
={}&
\sum_{j=1}^{M-1}
\frac{(-1)^{j+1}}{j}
\left(\sum_{\nu=1}^{M-1}D_\nu\right)^j
+
O_M(\tau^M)
\nonumber\\
={}&
\sum_{j=1}^{M-1}
\frac{(-1)^{j+1}}{j}
\sum_{\substack{\nu_1,\ldots,\nu_j=1 \\ \nu_1 + \cdots + \nu_j < M    }}^{M-1}
D_{\nu_1}\cdots D_{\nu_j}
+
O_M(\tau^M),
\label{eq:log-D-expansion}
\end{align}
where all the terms for which $
\nu_1+\cdots+\nu_j\ge M$ are included in the remainder. Grouping the remaining
terms according to $m=\nu_1+\cdots+\nu_j$ for $1 \le m \le M-1$
we obtain
\begin{equation}\label{eq:log-D-grouped}
\log\abs{1+\sum_{\nu=1}^{M-1}D_\nu}
=
\sum_{m=1}^{M-1}
\mathcal A_{m,p}(q_k,q_\ell)
+
O_M(\tau^M),
\end{equation}
where
\begin{equation}\label{eq:def-Amathcal}
\mathcal A_{m,p}(q_k,q_\ell)
=
\sum_{j=1}^{m}
\frac{(-1)^{j+1}}{j}
\sum_{\substack{\nu_1+\cdots+\nu_j=m\\
\nu_i\ge1}}
\prod_{i=1}^j
D_{\nu_i,p}(q_k,q_\ell).
\end{equation}

The term with $j=1$ in \eqref{eq:def-Amathcal} is simply
$D_{m,p}$. Hence
\begin{align}
\mathcal A_{m,p}(q_k,q_\ell)
={}&
D_{m,p}(q_k,q_\ell)
+
\sum_{j=2}^{m}
\frac{(-1)^{j+1}}{j}
\sum_{\substack{\nu_1+\cdots+\nu_j=m\\
\nu_i\ge1}}
\prod_{i=1}^j
D_{\nu_i,p}(q_k,q_\ell).
\label{eq:Amathcal-split}
\end{align}
After summing in $\ell\ne k$, the first term gives
\[
\sum_{\ell\ne k}
D_{m,p}(q_k,q_\ell)
=
LP_{m+1,p}(q_k).
\]
We define the remaining part by
\begin{equation}\label{eq:def-Amp}
A_{m,p}(q_k)
:=
\sum_{\ell\ne k}
\sum_{j=2}^{m}
\frac{(-1)^{j+1}}{j}
\sum_{\substack{\nu_1+\cdots+\nu_j=m\\
\nu_i\ge1}}
\prod_{i=1}^j
D_{\nu_i,p}(q_k,q_\ell).
\end{equation}
For $m=1$ the second sum is empty, and hence $
A_{1,p}=0$. Formula \eqref{eq:def-Amp} also makes clear why $A_{m,p}$ depends only
on the polynomials $
P_{2,p},\ldots,P_{m,p}$, but not on $P_{m+1, p}$. 
Indeed, when $j\ge2$ and $\nu_1+\cdots+\nu_j=m$, we necessarily have
\[
1\le\nu_i\le m-1.
\]
Thus $P_{m+1,p}$ occurs only in the linear term
$LP_{m+1,p}$.

We next verify the degree and size estimates for $A_{m,p}$. Since
$D_{\nu_i,p}$ has total degree at most $\nu_i$, every product in
\eqref{eq:def-Amp} has total degree at most
\[
\nu_1+\cdots+\nu_j=m
\]
in $(q_k,q_\ell)$. Expanding $A_{m,p}$ into monomials, summing in $\ell$, and using the identities
\[
\sum_{\ell\ne k}q_\ell^a
=
S_a-q_k^a,
\qquad
S_a=\sum_{\ell=1}^Nq_\ell^a,
\]
we see that $A_{m,p}$ is a polynomial in $q_k$ of degree at most
$m$. Furthermore, 
$|D_{\nu_i,p}|
\lesssim_M\tau^{\nu_i}$, and
so every product in \eqref{eq:def-Amp} is bounded by
\[
C_M\tau^{\nu_1+\cdots+\nu_j}
=
C_M\tau^m.
\]
Since, for fixed $M$, there are only $O_M(1)$ possible compositions
of $m$, and the outer sum in \eqref{eq:def-Amp} contains at most $N$ terms, we obtain
\[
\|A_{m,p}\|_{\mathrm{coeff}}
\lesssim_M
N\tau^m.
\]
Since all the polynomials involved have degree bounded in terms of \(M\), their coefficient norms are controlled by their uniform norms, with constants depending only on \(M\).

Finally, the terms omitted from \eqref{eq:log-D-grouped} contain a
total power at least $\tau^M$. For each fixed pair $(k,\ell)$ their
sum is $O_M(\tau^M)$, and hence, after summing over $\ell\ne k$,
\[
|E_{1,M,p}(q_k)|
\lesssim_M
N\tau^M,
\]
which completes the proof.
\end{proof}

We now move to the terms $\Delta_2$ and $\Delta_3$, but first we need some notation. For $d\ge1$ define
\begin{align}\label{eq:Fnd-new}
F_{n-1,d}(p)
& =
\sum_{\substack{y\in K_{n-1} \setminus \{p\}}}
\frac1{(p-y)^d},  \\
\Lambda_{n-1,d}(p)
& =
N\rho_p^dF_{n-1,d}(p).
\end{align}
The factor $N$ is included because, up to the factor
$(-1)^{d-1}/d$, $\Lambda_{n-1,d}(p)$ is precisely the $d$-th coefficient which
appears in the Taylor expansion of $\Delta_2$. For $\Delta_2$, we have:

\begin{lemma}[Expansion of $\Delta_2$]
\label{lem:Delta2-profile}
Assume that for all $p \in K_{n-1}$,
\begin{equation}\label{eq:field-hypothesis-profile}
|\Lambda_{n-1,d}(p)|
\le C_MN\tau^d,
\qquad 1\le d\le M.
\end{equation}
As long as $N$ is large enough, depending on $M$, $s$, and $\eta$,
\begin{equation}\label{eq:Delta2-profile-expansion}
\Delta_2(k)
=
\sum_{m=1}^{M-1}B_{m,p}(q_k)
+
E_{2,M,p}(q_k),
\end{equation}
where $B_{m,p}\in\mathcal P_m$ is a polynomial of degree at most $m$, and we have the bounds 
\[
\|B_{m,p}\|_{\mathrm{coeff}}
\lesssim_M N\tau^m, \qquad |E_{2,M,p}(q_k)|\lesssim_{M, \eta} N\tau^M,
\qquad 1\le k\le N.
\]
Moreover, each polynomial $B_{m,p}$ is determined uniquely by the polynomials $P_{2,p}, \dotsc P_{m,p}$ and depends 
only. More precisely, $B_{m,p}$ depends only on the exterior field
coefficients $\Lambda_{n-1,d}(p)$ and on the correction
polynomials $P_{\nu,p}$ with $\nu\le m$.
In particular, it is independent of $P_{m+1,p}$.

\end{lemma}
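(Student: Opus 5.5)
Recall that $\Delta_2(k)=N\sum_{y\in K_{n-1}\setminus\{p\}}\log|1+h_k/(p-y)|$ with $h_k=x_k-p=\rho_pQ_p(q_k)$. The plan is to Taylor expand the logarithm to order $M-1$, swap the order of summation, and regroup the resulting polynomial in $Q_p(q_k)$ by powers of $\tau$.

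\emph{Step 1: justify and perform the expansion.} By \eqref{eq:relative-scale} of Lemma~\ref{lem:prev_estimates} and \eqref{eq:Q-node-bound}, we have $|h_k/(p-y)|\le 2\rho_p/|p-y|\lesssim_{M,\eta}\tau\ll1$ uniformly in $y$. Since everything is real, $\log|1+u|=\sum_{d=1}^{M-1}\frac{(-1)^{d-1}}{d}u^d+O_M(|u|^M)$. Summing over $y$ gives
\[
\Delta_2(k)=\sum_{d=1}^{M-1}\frac{(-1)^{d-1}}{d}\Lambda_{n-1,d}(p)\,Q_p(q_k)^d+E,
\]
where $|E|\lesssim_M N\sum_{y\ne p}\rho_p^M|p-y|^{-M}\lesssim_{M,\eta}N\tau^M$. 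This bound on $E$ is \eqref{eq:geometric-sum} with $d=M$, $a=0$.

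\emph{Step 2: regroup by order.} Write $Q_p(t)=t+\sum_{\nu=1}^{M-1}P_{\nu+1,p}(t)$ and assign weight $0$ to the variable $t$ and weight $\nu$ to $P_{\nu+1,p}$. Expanding $Q_p(q_k)^d$ multinomially, a monomial $q_k^{d-j}P_{\nu_1+1,p}(q_k)\cdots P_{\nu_j+1,p}(q_k)$ carries the coefficient $\Lambda_{n-1,d}(p)$, of size $\lesssim N\tau^d$ by \eqref{eq:field-hypothesis-profile}, and each factor $P_{\nu_i+1,p}$ has size $\lesssim\tau^{\nu_i}$. The total size is therefore $N\tau^{m}$ with $m=d+\nu_1+\cdots+\nu_j$, and the degree is at most $(d-j)+\sum(\nu_i+1)=m$. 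We define $B_{m,p}$ as the sum of all such terms with total order $m\le M-1$. Then $B_{m,p}\in\mathcal P_m$, and $\|B_{m,p}\|_{\mathrm{coeff}}\lesssim_M N\tau^m$, because there are $O_M(1)$ terms and $\|\cdot\|_{\mathrm{coeff}}$ is submultiplicative up to $M$-constants on polynomials of bounded degree. The $C^1$ bounds \eqref{eq:new-correction-bounds} control coefficient norms, again up to $C_M$, since the degrees are bounded. The terms of order $\ge M$ are bounded pointwise on $[-1,1]$ by $C_MN\tau^M$ and go into $E_{2,M,p}$.

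\emph{Step 3: dependence.} In a term of order $m$ we have $d\ge1$, so every $\nu_i\le m-1$. Hence only $P_{2,p},\dots,P_{m,p}$ and the $\Lambda_{n-1,d}(p)$ with $d\le m$ appear, and $B_{m,p}$ is independent of $P_{m+1,p}$. For instance, $B_{1,p}=\Lambda_{n-1,1}(p)t$ and $B_{2,p}=\Lambda_{n-1,1}(p)P_{2,p}-\tfrac12\Lambda_{n-1,2}(p)t^2$.

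The only genuinely delicate point is Step 1, namely the uniformity of the remainder. It needs both the summed bound \eqref{eq:geometric-sum}, which handles the far generations, and the pointwise smallness \eqref{eq:relative-scale}, which handles the nearby siblings. The remaining steps are bookkeeping.
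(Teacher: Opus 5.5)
Your proposal is correct and follows essentially the paper's route. You Taylor-expand $\log|1+h_k/(p-y)|$, justified pointwise by \eqref{eq:relative-scale}, and sum to obtain $\sum_d \frac{(-1)^{d-1}}{d}\Lambda_{n-1,d}(p)Q_p(q_k)^d$; you then regroup the multinomial expansion of $Q_p(q_k)^d$ by total order $m=d+\nu_1+\cdots+\nu_j$, which controls both size and degree, and $d\ge1$ forces $\nu_i\le m-1$. The only cosmetic difference is that you truncate at order $M-1$ and bound the remainder with \eqref{eq:geometric-sum} at exponent $M$, while the paper truncates at order $M$ and uses exponent $M+1$, so you do not need the field hypothesis for $d=M$.
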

\begin{proof}
Define the small parameter $h_k=\rho_pQ_p(q_k)$, and set
\[
    H_p(h)
    =
    N\sum_{\substack{y\in K_{n-1}\\y\ne p}}
    \left(
        \log|p+h-y|-\log|p-y|
    \right),
\]
so that $\Delta_2(k)=H_p(h_k)$. We first justify the Taylor expansion of $H_p$ at the point $h_k$.
By \eqref{eq:new-correction-bounds},
\[
    \|Q_p-\operatorname{Id}\|_{C^1([-1,1])}
    \lesssim_M\tau,
\]
and hence for $\tau$ small enough, 
\[
    |Q_p(q_k)|\lesssim_M1,
    \qquad
    |h_k|\lesssim_M\rho_p.
\]
Estimate \eqref{eq:relative-scale} of Lemma~\ref{lem:prev_estimates} therefore gives
\[
    \left|\frac{h_k}{p-y}\right|
    \lesssim_{M,\eta}\tau,
    \qquad
    y\in K_{n-1},\quad y\ne p.
\]
Taking $\tau$ sufficiently small, we may assume that
\[
    \left|\frac{h_k}{p-y}\right|\le\frac12
\]
uniformly in $y$ and $k$.

Thus, for every $y\ne p$,
\[
    \log\left|1+\frac{h_k}{p-y}\right|
    =
    \sum_{d=1}^{M}
    \frac{(-1)^{d-1}}{d}
    \left(\frac{h_k}{p-y}\right)^d
    +
    R_M(h_k,p-y),
\]
where the remainder satisfies $|R_M(h_k,p-y)|
\lesssim_M
\left|h_k/(p-y)\right|^{M+1}$. 
After summing over $y\in K_{n-1}\setminus\{p\}$, we obtain
\begin{equation}
\label{eq:Delta2-Taylor}
    \Delta_2(k)
    =
    \sum_{d=1}^{M}
    \frac{(-1)^{d-1}}{d}
    \Lambda_{n-1,d}(p)Q_p(q_k)^d
    +
    \mathcal R_{M,p}(h_k),
\end{equation}
by the definition of $h_k$ and $\Lambda_{n-1,d}$. The Taylor remainder satisfies 
\begin{align}
    |\mathcal R_{M,p}(h_k)|
    &\lesssim_M
    N|h_k|^{M+1}
    \sum_{\substack{y\in K_{n-1}\\y\ne p}}
    \frac{1}{|p-y|^{M+1}}
    \nonumber\\
    &\lesssim_M
    N\rho_p^{M+1}
    \sum_{\substack{y\in K_{n-1}\\y\ne p}}
    \frac1{|p-y|^{M+1}} \lesssim_M
    N\tau^{M+1}
\label{eq:Hp-Taylor-remainder}
\end{align}
where the last inequality follows from Lemma~\ref{lem:prev_estimates} with $d=M+1$ and $a=0$.

By \eqref{eq:field-hypothesis-profile},
\[
|\Lambda_{n-1,d}(p)|
\lesssim_M N\tau^d.
\]

It remains to analyze the explicit terms in
\eqref{eq:Delta2-Taylor}. For each $1 \le d \le M $, write
\[
    Q_p(q_k)^d
    =
    \left(
        q_k+\sum_{\nu=1}^{M-1}P_{\nu+1,p}(q_k)
    \right)^d.
\]
Expanding the product yields a sum of terms of the form
\begin{equation}
\label{eq:Qd-typical-term}
    c(d;\nu_1,\ldots,\nu_j)\,
    q_k^{d-j}
    P_{\nu_1+1,p}(q_k)\cdots
    P_{\nu_j+1,p}(q_k),
\end{equation}
where
\[
    0\le j\le d,
    \qquad
    1\le\nu_1,\ldots,\nu_j\le M-1.
\]
The case $j=0$ is understood to give simply $q_k^d$. By the assumption \eqref{eq:new-correction-bounds},
\[
    \left|
    P_{\nu_1+1,p}(q_k)\cdots
    P_{\nu_j+1,p}(q_k)
    \right|
    \lesssim_M
    \tau^{\nu_1+\cdots+\nu_j}.
\]
Consequently, after multiplication by
$\Lambda_{n-1,d}(p)$, the term
\eqref{eq:Qd-typical-term} has size
\begin{equation}
\label{eq:Delta2-typical-size}
    O_M\left(
        N\tau^{d+\nu_1+\cdots+\nu_j}
    \right)
\end{equation}
because we assume \eqref{eq:field-hypothesis-profile}. Moreover, its degree in $q_k$ is at most
\[
    (d-j)+\sum_{i=1}^j(\nu_i+1)
    =
    d+\sum_{i=1}^j\nu_i.
\]
Thus the same quantity
\[
    m=d+\nu_1+\cdots+\nu_j
\]
controls both the order in $\tau$ and the polynomial degree.

For $1\le m\le M-1$, let $B_{m,p}$ be the sum of all the terms as in \eqref{eq:Qd-typical-term}
for which $d+\nu_1+\cdots+\nu_j=m$, i.e., 
\begin{equation}
\label{eq:def-Bmp}
B_{m,p}(q_k)
=
\sum_{d=1}^{m}
\frac{(-1)^{d-1}}{d}\Lambda_{n-1,d}(p)
\sum_{j=0}^{d}
\sum_{\substack{
\nu_1,\ldots,\nu_j\ge1\\
d+\nu_1+\cdots+\nu_j=m
}}
c(d;\nu_1,\ldots,\nu_j)\,
q_k^{d-j}
\prod_{i=1}^jP_{\nu_i+1,p}(q_k),
\end{equation}
with the usual convention when $j=0$. In particular, the term corresponding to $d = m$ in the above is 
\[
    \frac{(-1)^{m-1}}{m}
    \Lambda_{n-1,m}(p)q_k^m.
\]
The preceding degree estimate showed that $B_{m,p}\in\mathcal P_m$ is a polynomial of degree at most $m$. Also, each summand in \eqref{eq:def-Bmp} is bounded by $C_MN\tau^m$. 
Since for fixed $M$ there are only $O_M(1)$ possible choices of
$d,j,\nu_1,\ldots,\nu_j$, we obtain our desired bound on $B_{m,p}$:
\[
    \|B_{m,p}\|_{\mathrm{coeff}}
    \lesssim_M N\tau^m.
\]
Moreover, by the definition of $B_{m,p}(q_k)$ as in \eqref{eq:def-Bmp}, since the sum starts at $d =1$, we see $B_{m,p}$ is a linear combination of product of polynomials $P_{\nu+1, p}$ with $\nu+1 \le m$.

Finally, all explicit terms in \eqref{eq:Delta2-Taylor} whose total
order
\[
    d+\nu_1+\cdots+\nu_j
\]
is at least $M$ contribute $O_M(N\tau^M)$, since $\tau<1$.
Combining these terms with the Taylor remainder gives
\[
    \Delta_2(k)
    =
    \sum_{m=1}^{M-1}B_{m,p}(q_k)
    +
    E_{2,M,p}(q_k),
\]
with $|E_{2,M,p}(q_k)|
\lesssim_M N\tau^M$, proving the lemma.
\end{proof}

Our final estimate of this section is on $\Delta_3$: 

\begin{lemma}[Expansion of $\Delta_3$]
\label{lem:cluster-profiles-new} For $N$ large depending on $M$, $s$, and $\eta$, we have 
\begin{equation}
\label{eq:Delta3-profile-expansion}
\Delta_3(k)
=
\sum_{m=2}^{M-1}C_{m,p}^{(3)}(q_k)
+
E_{3,M,p}(q_k),
\end{equation}
where $C_{m,p}^{(3)}\in\mathcal P_m$ is a polynomial of degree at most $m$, and we have the bounds
\[
\|C_{m,p}^{(3)}\|_{\mathrm{coeff}}
\lesssim_MN\tau^m, \qquad |E_{3,M,p}(q_k)|
\lesssim_{M, \eta }N\tau^M,
\qquad
1\le k\le N.
\]
Moreover, the polynomial $C_{2,p}^{(3)}$ is constant, and each $C_{m,p}^{(3)}$ is determined uniquely by the collection of polynomials
\[
P_{\nu+1,z},
\qquad
z\in K_{n-1},
\qquad
1\le \nu\le m-2.
\]

\end{lemma}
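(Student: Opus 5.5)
We expand each cluster contribution in \eqref{eq:Delta3-one-parent} to order $M$ in the small ratio $\rho_y/(x-y)$. This uses the barycenter normalization \eqref{eq:Q-barycenter} to kill the linear term, and then Taylor-expands the resulting negative powers of $x-y=(p-y)+h_k$ in the small variable $h_k=\rho_pQ_p(q_k)$ to extract polynomials in $q_k$.

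\textbf{Step 1: expansion of a single cluster.} For $y\in K_{n-1}\setminus\{p\}$ with children $z_m=y+\rho_yQ_y(q_m)$, write
\[
\sum_{\hat z=y}\log\frac{|x-z|}{|x-y|}=\sum_{m=1}^N\log\Bigl|1-\frac{\rho_yQ_y(q_m)}{x-y}\Bigr|
=-\sum_{d=2}^{M}\frac1d\,\frac{\rho_y^d\,\sigma_d(y)}{(x-y)^d}+O\Bigl(N\frac{\rho_y^{M+1}}{|x-y|^{M+1}}\Bigr),
\]
where $\sigma_d(y)=\sum_m Q_y(q_m)^d$. The $d=1$ term vanishes by \eqref{eq:Q-barycenter}. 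The expansion is legitimate because \eqref{eq:relative-scale} together with $|x-p|\lesssim\rho_p$ gives $|x-y|\simeq|p-y|$ and $\rho_y/|x-y|\lesssim\tau$. Since $Q_y=\mathrm{Id}+\sum_\nu P_{\nu+1,y}$ with $\|P_{\nu+1,y}\|\lesssim\tau^\nu$, we split $\sigma_d(y)$ into homogeneous pieces $\sigma_{d,\mu}(y)$ of order $\tau^\mu$. Each piece is a polynomial expression in the $P_{\nu+1,y}$ with $\sum\nu=\mu$, of size $O_M(N\tau^\mu)$.

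\textbf{Step 2: expansion in the child variable.} Next we write $(x-y)^{-d}=(p-y)^{-d}(1+h_k/(p-y))^{-d}$ and expand in powers $h_k^e/(p-y)^e$ up to total order $M$. Then $h_k^e=\rho_p^eQ_p(q_k)^e$, which we expand as in Lemma~\ref{lem:Delta2-profile} into pieces of degree $e+\sum\nu_i'$ and size $\tau^{\sum\nu_i'}$. A typical term is then
\[
c\,\rho_p^{e}\rho_y^{d}(p-y)^{-d-e}\,\sigma_{d,\mu}(y)\,q_k^{e-j}\prod_i P_{\nu'_i+1,p}(q_k).
\]
We assign it the order $m=d+e+\mu+\sum\nu_i'$. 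Summing over $y$ and using \eqref{eq:geometric-sum} with $d+e$ in place of $d$ and $a=d$, the term has size $N\tau^{\mu}\tau^{\sum\nu'}\tau^{d+e}\lesssim N\tau^m$. Its degree in $q_k$ is at most $e+\sum\nu_i'\le m-d\le m-2$, so in particular it is at most $m$. Collecting the terms with a given $m\le M-1$ defines $C^{(3)}_{m,p}$, and everything of order $\ge M$, together with both Taylor remainders, goes into $E_{3,M,p}$. The remainders are controlled by \eqref{eq:geometric-sum} with $d=M+1$. The coefficient bound $\lesssim_M N\tau^m$ follows as before from the finitely many index choices.

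\textbf{Step 3: structure.} For $m=2$ the only admissible choice is $d=2$ and $e=\mu=\nu'=0$, which gives the constant
\[
-\tfrac12\sum_{y}\rho_y^2\sigma_2(y)(p-y)^{-2},
\]
so $C^{(3)}_{2,p}$ is constant. Here there is a subtlety: $\sigma_2$ is a full quantity. We must therefore decide whether its higher-order parts are kept inside $C_{2}$ or moved to higher $m$. The statement's dependence claim forces the graded choice, with $\sigma_{2,0}=\sum_m q_m^2=N/2$. For the dependence claim in general, the $P_{\nu+1,z}$ with $z\ne p$ enter only through $\sigma_{d,\mu}$, with $\sum\nu=\mu\le m-d\le m-2$, hence $\nu\le m-2$. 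The terms involving $P_{\nu'+1,p}$, with $\nu'\le m-2$, are where the obstacle lies. The statement lists only $z\in K_{n-1}$ and $\nu\le m-2$; since $p\in K_{n-1}$, these are included, and it is this version of the claim that I would verify.

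The main obstacle is the bookkeeping in Step 2. We must check that the double expansion produces no term whose degree exceeds its $\tau$-order, and that the Step 1 expansion is uniform over siblings of $p$, where $|p-y|$ can be as small as $\rho_{\hat p}N^{-2}$. The latter is exactly what the weighted estimate \eqref{eq:geometric-sum} (via Lemma~\ref{lem:weighted}) handles.
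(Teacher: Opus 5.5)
Your proposal is correct and follows essentially the same route as the paper's proof. The barycenter normalization kills the linear term, and $(p-y+h_k)^{-a}$ is Taylor expanded in $h_k$. The products $Q_y^aQ_p^b$ are then graded by the total order $a+b+\sigma$, and each graded piece is summed over $y$ via \eqref{eq:geometric-sum} with exponent $a+b$ and $\rho_y$-power $a$. This matches the paper's bookkeeping, including the graded treatment of $\sum_\ell Q_y(q_\ell)^2$ and the triangular dependence $\nu_i,\mu_j\le m-2$. Your observation that an order-$m$ term has $q_k$-degree at most $m-a-\sum\nu_i\le m-2$ is slightly sharper than the paper's stated bound $\le m$, and it gives the constancy of $C^{(3)}_{2,p}$ directly.
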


\begin{proof}
Let $y\in K_{n-1}\setminus\{p\}$ and write its children as
\[
    z_\ell
    =
    y+u_\ell,
    \qquad
    u_\ell
    =
    \rho_yQ_y(q_\ell),
    \qquad
    1\le\ell\le N.
\]
By \eqref{eq:new-correction-normalization} and
$\sum_{\ell=1}^Nq_\ell=0$, $\sum_{\ell=1}^NQ_y(q_\ell)=0$, and therefore 
\begin{equation}
\label{eq:cluster-barycenter-cancellation}
    \sum_{\ell=1}^Nu_\ell=0.
\end{equation}

Recall that $x = x_k$ is a child of $p$ with index $k$, so 
\[
    x_k
    =
    p+h_k,
    \qquad
    h_k
    =
    \rho_pQ_p(q_k).
\]
By \eqref{eq:new-correction-bounds},
\[
    |Q_p(q_k)|+|Q_y(q_\ell)|
    \lesssim_M1,
\]
and hence
\[
    |h_k|\lesssim_M\rho_p,
    \qquad
    |u_\ell|\lesssim_M\rho_y.
\]
Lemma~\ref{lem:prev_estimates} then gives (for $N$ sufficiently large)
\[
    |h_k|
    \lesssim_{M,\eta}
    \tau|p-y|,
    \qquad
    |u_\ell|
    \lesssim_{M,\eta}
    \tau|p-y|.
\]
Thus, for $\tau$ sufficiently small, $|x_k-y|
\simeq
|p-y|$ and $\abs{u_\ell}/\abs{x_k-y} \le 1/2$, uniformly in all the parameters under consideration.

We may therefore expand the logarithm around $x_k-y$ up to
order $M-1$:
\begin{align*}
    &\sum_{\ell=1}^N
    \left(
        \log|x_k-y-u_\ell|
        -
        \log|x_k-y|
    \right)  =
    \sum_{\ell=1}^N
    \log\left|1-\frac{u_\ell}{x_k-y}\right|=
    -
    \sum_{a=2}^{M-1}
    \frac1a
    \frac{\sum_{\ell=1}^N u_\ell^a}
         {(x_k-y)^a}
    +\mathcal R_y,
\end{align*}
where the linear term in $u_\ell$ (corresponding to $a=1$)
vanishes by \eqref{eq:cluster-barycenter-cancellation}.
Since
\[
    \left|\frac{u_\ell}{x_k-y}\right|
    \lesssim_{M,\eta}\tau
\]
and $|x_k-y|\approx |p-y|$, the remainder satisfies
\[
\begin{aligned}
    |\mathcal R_y|
    &\lesssim_M
    \sum_{\ell=1}^N
    \left|\frac{u_\ell}{x_k-y}\right|^M\lesssim_{M,\eta}
    N\frac{\rho_y^M}{|p-y|^M}.
\end{aligned}
\]
Consequently, by Lemma~\ref{lem:prev_estimates}
with $d=a=M$, we obtain
\[
\begin{aligned}
    \sum_{\substack{y\in K_{n-1}\\y\ne p}}
    |\mathcal R_y|
    &\lesssim_{M,\eta}
    N\sum_{\substack{y\in K_{n-1}\\y\ne p}}
    \frac{\rho_y^M}{|p-y|^M}\lesssim_{M,\eta}N\tau^M.
\end{aligned}
\]
Thus the total contribution of these remainders
is of the required order, uniformly in the generation.

We next expand
\[
    (x_k-y)^{-a}
    =
    (p-y+h_k)^{-a}
\]
around $p-y$. Up to terms of total order at least $M$,
this produces terms of the form
\begin{equation}
\label{eq:cluster-general-term}
    c_{a,b}\,
    \frac{
        \rho_y^a\rho_p^b
    }{
        (p-y)^{a+b}
    }
    \left(
        \sum_{\ell=1}^N Q_y(q_\ell)^a
    \right)
    Q_p(q_k)^b,
\end{equation}
where
\[
    a\ge2,
    \qquad b\ge0,
\]
and $c_{a,b}$ depends only on $a$ and $b$.
The case $b=0$ is independent of $q_k$ and will be
retained as the constant part of the corresponding profile.

We now expand the polynomial perturbations in both $Q_y^a$ and
$Q_p^b$. A typical monomial in
\[
    Q_y(q_\ell)^aQ_p(q_k)^b
\]
has the form
\begin{equation}
\label{eq:cluster-corrected-monomial}
    q_\ell^{a-r}
    q_k^{b-s}
    \prod_{i=1}^r
        P_{\nu_i+1,y}(q_\ell)
    \prod_{j=1}^s
        P_{\mu_j+1,p}(q_k),
\end{equation}
where
\[
    0\le r\le a,
    \qquad
    0\le s\le b, \qquad \text{ and } 1\le\nu_i,\mu_j\le M-1.
\]
Set
\[
    \sigma
    =
    \sum_{i=1}^r\nu_i
    +
    \sum_{j=1}^s\mu_j.
\]
Since \eqref{eq:new-correction-bounds} holds both at $y$ and at $p$,
the product of the polynomials in
\eqref{eq:cluster-corrected-monomial} is bounded by
\[
    C_M\tau^\sigma.
\]

We claim that the contribution obtained from
\eqref{eq:cluster-general-term} and
\eqref{eq:cluster-corrected-monomial}, after summing over
$y\in K_{n-1}\setminus\{p\}$, is bounded by
\begin{equation}
\label{eq:cluster-general-bound}
    C_MN\tau^{a+b+\sigma}.
\end{equation}
Indeed, the sum over the $N$ children of $y$ contributes at most a
factor $C_MN$, and hence the absolute value of the corresponding term
is bounded by
\[
    C_MN\tau^\sigma
    \sum_{\substack{y\in K_{n-1}\\y\ne p}}
    \frac{\rho_y^a\rho_p^b}
         {|p-y|^{a+b}}.
\]
Applying Lemma~\ref{lem:prev_estimates} with exponent $a+b$ and with
the power of $\rho_y$ equal to $a$, we obtain
\[
    \sum_{\substack{y\in K_{n-1}\\y\ne p}}
    \frac{\rho_y^a\rho_p^b}
         {|p-y|^{a+b}}
    \lesssim_{M, \eta}
    \tau^{a+b},
\]
since $a\ge2$. This proves
\eqref{eq:cluster-general-bound}.

We now organize the expansion according to its total order. Set
\begin{equation}
\label{eq:cluster-total-order}
    m
    =
    a+b+\sigma.
\end{equation}
By \eqref{eq:cluster-general-bound}, every term of total order $m$ is
bounded by $C_MN\tau^m$. Moreover, its degree in the child variable $q_k$ is also at most $m$. Indeed, the
only factor depending on $q_k$ in
\eqref{eq:cluster-corrected-monomial} is
\[
    q_k^{b-s}
    \prod_{j=1}^sP_{\mu_j+1,p}(q_k),
\]
whose degree is at most
\[
    (b-s)
    +
    \sum_{j=1}^s(\mu_j+1)
    =
    b+\sum_{j=1}^s\mu_j
    \le
    a+b+\sigma
    =
    m.
\]
Thus all terms of total order $m$ form a polynomial in $q_k$ of
degree at most $m$.

The same bookkeeping gives the triangular dependence needed later.
Since $a\ge2$, from
\[
    m=a+b+\sigma
\]
we have terms of the form \eqref{eq:cluster-corrected-monomial} must satisfy $\nu_i, \mu_j \le \sigma \le m-2$. Consequently the terms of total order $m$ depend only on
\[
    P_{\nu+1,z},
    \qquad
    z\in K_{n-1},
    \qquad
    1\le\nu\le m-2.
\]

For $2\le m\le M-1$, let $C_{m,p}^{(3)}$ be the sum of all explicit
terms for which
\[
    a+b+\sigma=m.
\]
Then $C_{m,p}^{(3)}
\in
\mathcal P_m$ and 
\[
    \|C_{m,p}^{(3)}\|_{\mathrm{coeff}}
    \lesssim_M
    N\tau^m.
\]
The terms with $b=0$ are independent of $q_k$ and are included as the
constant terms of $C_{m,p}^{(3)}$. A nonconstant term must have $b\ge1$. Since $a\ge2$, every
nonconstant term has total order
\[
    m=a+b+\sigma\ge3.
\]
Thus $C_{2,p}^{(3)}$ is constant, and the first possible non-constant
cluster polynomial occurs with $m \ge 3$, which corresponds to $\tau^3$.

It remains to estimate the discarded terms. All explicit terms for which the total order satisfies
\[
    a+b+\sigma\ge M
\]
are bounded, after summation, by
\[
    C_MN\tau^M
\]
using \eqref{eq:cluster-general-bound} and $\tau<1$. The Taylor remainder arising from the expansion of
$(p-y+h_k)^{-a}$ is estimated using Lemma \ref{lem:prev_estimates}. Namely, every such term
contains a factor of the form
\[
\frac{\rho_y^a\rho_p^b}
{|p-y|^{a+b}}
\]
with $a+b\ge M$, and Lemma~\ref{lem:prev_estimates} gives again an
$O_M(N\tau^M)$ contribution. Collecting all discarded terms into $E_{3,M,p}$ (including the first Taylor remainder from the first expansion of the $\log$) we conclude that
\[
    |E_{3,M,p}(q_k)|
    \lesssim_M
    N\tau^M,
    \qquad
    1\le k\le N.
\]
This proves \eqref{eq:Delta3-profile-expansion}.
\end{proof}

Combining Lemmas \ref{lem:Delta1-profile}-\ref{lem:cluster-profiles-new}, our main result of this section is the following decomposition.

\begin{lemma}[Expansion of the errors]
\label{lem:higher-basic-error}
Fix $M\ge2$, $\eta > 0$, and suppose that $K_m$ have been constructed for $0 \le m \le n$ so as to satisfy the standing assumptions \eqref{eq:old-children}-\eqref{eq:old-scale-bound} and \eqref{eq:current-scale-bound}-\eqref{eq:new-children}. Assume in addition that we have the bound
\[
|\Lambda_{n-1,d}(p)|
\lesssim_MN\tau^d,
\qquad
1\le d\le M,
\]
uniformly for $p\in K_{n-1}$. Then for $N$ chosen large enough, depending on $M$, $s$, and $\eta$, the following holds. 

For $x = x_k = p + \rho_p Q_p(q_k) \in K_n$ and $g_{n}(x) - g_{n-1}(p) = N^{-n} \left( \Delta_1(k) + \Delta_{2}(k) + \Delta_3(k) \right)$ defined as usual, we have 
\begin{align}
\Delta_1(k)+\Delta_2(k)+\Delta_3(k)
={}&(N-1)\log\rho_p+b_*+\beta_k \nonumber\\
&+\sum_{m=1}^{M-1}
\bigl(LP_{m+1,p}(q_k)+R_{m,p}(q_k)\bigr)
+E_{M,p}(q_k),
\label{eq:combined-profile-expansion}
\end{align}
where $R_{m,p}\in\mathcal P_m$ is a polynomial of degree at most $m$, and
\[
\|R_{m,p}\|_{\mathrm{coeff}}\lesssim_MN\tau^m, \qquad |E_{M,p}(q_k)|
\lesssim_{M, \eta } N\tau^M,
\qquad 1\le k\le N.
\]
Moreover, the polynomial $R_{m,p}$ is determined uniquely by the polynomials 
\[
    P_{\nu+1,z},
    \qquad
    z\in K_{n-1},\quad 1\le \nu\le m-1.
\]
In particular, it is independent of all corrections
$P_{m+1,z}$.
\end{lemma}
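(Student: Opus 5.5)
This lemma should follow by summing the three expansions of Lemmas \ref{lem:Delta1-profile}, \ref{lem:Delta2-profile} and \ref{lem:cluster-profiles-new} and regrouping by order in $\tau$. The standing assumptions \eqref{eq:old-children}--\eqref{eq:old-scale-bound} and \eqref{eq:current-scale-bound}--\eqref{eq:new-children} are exactly the hypotheses of those three lemmas. The additional hypothesis $|\Lambda_{n-1,d}(p)|\lesssim_M N\tau^d$ for $1\le d\le M$ is precisely \eqref{eq:field-hypothesis-profile}. So for $N$ large depending on $M,s,\eta$ all three expansions hold simultaneously.

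Concretely, I add \eqref{eq:Delta1-profile-expansion}, \eqref{eq:Delta2-profile-expansion} and \eqref{eq:Delta3-profile-expansion}. The term $C_{1,p}+\beta_k=(N-1)\log\rho_p+b_*+\beta_k$ gives the first line of \eqref{eq:combined-profile-expansion}, and the linear terms $LP_{m+1,p}(q_k)$ appear only from $\Delta_1$. I then define
\[
R_{1,p}:=A_{1,p}+B_{1,p}=B_{1,p},\qquad R_{m,p}:=A_{m,p}+B_{m,p}+C^{(3)}_{m,p},\quad 2\le m\le M-1,
\]
with the convention that $C^{(3)}_{1,p}=0$, since the $\Delta_3$ expansion starts at $m=2$. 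The error is $E_{M,p}:=E_{1,M,p}+E_{2,M,p}+E_{3,M,p}$.

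The properties then follow directly from the three lemmas. Each summand of $R_{m,p}$ lies in $\mathcal P_m$ with coefficient norm $\lesssim_M N\tau^m$, so $R_{m,p}\in\mathcal P_m$ with $\|R_{m,p}\|_{\mathrm{coeff}}\lesssim_M N\tau^m$ by the triangle inequality. The three error bounds give $|E_{M,p}(q_k)|\lesssim_{M,\eta}N\tau^M$.

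For the dependence statement I collect the three triangular statements. $A_{m,p}$ depends only on $P_{2,p},\dots,P_{m,p}$, i.e. on $P_{\nu+1,p}$ with $\nu\le m-1$. $B_{m,p}$ depends only on the fixed field coefficients $\Lambda_{n-1,d}(p)$, which are determined by $K_{n-1}$, and on $P_{\nu,p}$ with $\nu\le m$, again $P_{\nu+1,p}$ with $\nu\le m-1$. $C^{(3)}_{m,p}$ depends only on $P_{\nu+1,z}$, $z\in K_{n-1}$, with $\nu\le m-2$. Hence $R_{m,p}$ is determined by $\{P_{\nu+1,z}: z\in K_{n-1},\ 1\le\nu\le m-1\}$ together with the already fixed geometry of $K_{n-1}$ and the scales $\rho_z$. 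In particular $R_{m,p}$ is independent of every $P_{m+1,z}$.

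The only real point to check is this last bookkeeping for $C^{(3)}_{m,p}$. It involves the scales $\rho_y$ and corrections of the other points $y\neq p$ of generation $n-1$, not just of $p$, which is why the dependence is stated over all $z\in K_{n-1}$. One should also note that the $\rho_y=a_{n-1}(y)r^{n-1}$ of the current generation enter $C^{(3)}_{m,p}$. This is harmless for the statement, which only asserts independence from the correction polynomials $P_{m+1,z}$; the scales are treated as given data.
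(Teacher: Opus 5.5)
Your proposal is correct and follows the paper's proof: sum the three expansions, set $R_{m,p}=A_{m,p}+B_{m,p}+C^{(3)}_{m,p}$ with $C^{(3)}_{1,p}=0$, collect the three remainders into $E_{M,p}$, and read off degree, size and triangular dependence from Lemmas \ref{lem:Delta1-profile}, \ref{lem:Delta2-profile} and \ref{lem:cluster-profiles-new}. Your remark that the scales $\rho_y$ of generation $n-1$ enter $C^{(3)}_{m,p}$ only as fixed data is consistent with the construction, since the $a_{n-1}(y)$ are chosen before any correction polynomial.
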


\begin{proof}
Lemma~\ref{lem:Delta1-profile} gives the basic scalar term
$(N-1)\log\rho_p+b_*$, the fixed profile $\beta_k$, and the
perturbative profiles $LP_{m+1,p}+A_{m,p}$. Lemma~\ref{lem:Delta2-profile} gives
\[
\Delta_2(k)=\sum_{m=1}^{M-1}B_{m,p}(q_k)+E_{2,M,p}(q_k),
\]
and similarly,
Lemma~\ref{lem:cluster-profiles-new} gives
\[
\Delta_3(k)=\sum_{m=2}^{M-1}C_{m,p}^{(3)}(q_k)
+E_{3,M,p}(q_k).
\]
Put
\[
C_{1,p}^{(3)}=0,\qquad
R_{m,p}=A_{m,p}+B_{m,p}+C_{m,p}^{(3)}.
\]
The degree, size, and triangular-dependence assertions follow from
the preceding three lemmas. Summing the pointwise remainder estimates gives
\[
|E_{M,p}(q_k)|\lesssim_MN\tau^M,
\]
as desired.
\end{proof}

\section{Discrete potential estimates}
\label{sec:higher-potential}
The estimate for the external term in
Lemma~\ref{lem:higher-basic-error} was proved under the hypothesis
\[
    |\Lambda_{n-1,d}(p)|
    \lesssim_{M,\eta} N\tau^d,
    \qquad
    1\le d\le M,
    \qquad p\in K_{n-1}.
\]
We now prove the corresponding estimate uniformly at every generation,  assuming only that the scalars $a_{m}(z)$ satisfy the bounds $1 \le a_{m}(z) \lesssim_{M,\eta} N \sin \theta_{j(z)}$, and that the deformations $Q_z$ are close to the identity; see Lemma \ref{lem:higher-potential-new}.

For this purpose, we use $n$ to denote the current generation.
Thus, for $p\in K_n$, write
\[
    \rho_p=a_n(p)r^n,
\]
and define
\[
    F_{n,d}(p)
    =
    \sum_{\substack{y\in K_n\\y\ne p}}
    \frac1{(p-y)^d},
    \qquad
    \Lambda_{n,d}(p)
    =
    N\rho_p^dF_{n,d}(p).
\]
Throughout this section we assume, for all generations under
consideration, the geometric hypotheses
\eqref{eq:old-scale-bound}, \eqref{eq:Q-close-id}, and
\eqref{eq:Q-barycenter} from
Section~\ref{sec:corrected-geometry}.  In particular,
\eqref{eq:Q-bilip} follows from \eqref{eq:Q-close-id}.
The estimates below depend only on these uniform properties, and not
on the particular choice of the deformations $Q_z$. Let $p\in K_n$ and let $q=\widehat p\in K_{n-1}$ be its parent. Then we have the following decomposition of $F_{n,d}(p)$.

\begin{lemma}
\label{lem:field-decomposition}
For every $d\ge1$,
\begin{equation}
\label{eq:field-recursion-new}
F_{n,d}(p)
=
S_{n,d}(p)
+
NF_{n-1,d}(q)
+
E_{n,d}^{\mathrm{ev}}(p)
+
E_{n,d}^{\mathrm{cl}}(p),
\end{equation}
where
\[
S_{n,d}(p)
=
\sum_{\substack{y\in K_n\\ \widehat y=q,\ y\ne p}}
(p-y)^{-d},
\]
\[
E_{n,d}^{\mathrm{ev}}(p)
=
N\sum_{\substack{z\in K_{n-1}\\z\ne q}}
\left[
(p-z)^{-d}-(q-z)^{-d}
\right],
\]
and
\[
E_{n,d}^{\mathrm{cl}}(p)
=
\sum_{\substack{z\in K_{n-1}\\z\ne q}}
\left[
\sum_{\widehat y=z}(p-y)^{-d}
-
N(p-z)^{-d}
\right].
\]
For $n=1$ we also have $F_{1,d}(p)=S_{1,d}(p)$.
\end{lemma}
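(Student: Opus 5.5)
This identity is purely algebraic; it is the same bookkeeping as the decomposition of $g_n(x)-g_{n-1}(\hat x)$ into $\Delta_1,\Delta_2,\Delta_3$ in \eqref{eqn:delta}, with $\log\abs{\cdot}$ replaced by $(\cdot)^{-d}$. We partition $K_n\setminus\{p\}$ according to the parent of $y$. Every $y\in K_n$ has a unique parent $\hat y\in K_{n-1}$, so
\[
F_{n,d}(p)=\sum_{\substack{y\in K_n\\ \hat y=q,\ y\neq p}}(p-y)^{-d}+\sum_{\substack{z\in K_{n-1}\\ z\neq q}}\ \sum_{\hat y=z}(p-y)^{-d}.
\]
The first sum is exactly $S_{n,d}(p)$. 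All terms are finite because distinct points of $K_n$ are distinct, by the separation in Lemma \ref{lem:ahlf_reg}.

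For the second sum, fix $z\neq q$. We add and subtract $N(p-z)^{-d}$ and then $N(q-z)^{-d}$:
\[
\sum_{\hat y=z}(p-y)^{-d}=\Bigl[\sum_{\hat y=z}(p-y)^{-d}-N(p-z)^{-d}\Bigr]+N\bigl[(p-z)^{-d}-(q-z)^{-d}\bigr]+N(q-z)^{-d}.
\]
We sum over $z\in K_{n-1}\setminus\{q\}$. The first bracket sums to $E^{\mathrm{cl}}_{n,d}(p)$ and the second to $E^{\mathrm{ev}}_{n,d}(p)$. The last term sums to $N\sum_{z\neq q}(q-z)^{-d}=NF_{n-1,d}(q)$. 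This gives \eqref{eq:field-recursion-new}. All sums are finite, so the rearrangement needs no justification, and $p-z\neq 0$ for $z\in K_{n-1}$ by the separation bounds of Lemma \ref{lem:ahlf_reg}.

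For $n=1$, $K_0=\{0\}$, so every $y\in K_1\setminus\{p\}$ is a sibling of $p$ with parent $q=0$. The sum over $z\in K_0\setminus\{q\}$ is then empty, and both $F_{0,d}(q)$ and the two error terms vanish. Hence $F_{1,d}(p)=S_{1,d}(p)$.

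There is no real obstacle here. The only point requiring care is that the denominators are nonzero, which follows from the distinctness of the points in each $K_m$ given by Lemma \ref{lem:ahlf_reg}.
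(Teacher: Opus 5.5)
Your proposal is correct and is exactly the paper's argument: group the points of $K_n\setminus\{p\}$ by their parents, and for each parent $z\neq q$ add and subtract $N(p-z)^{-d}$ and then $N(q-z)^{-d}$. One small correction to your remark on denominators: Lemma \ref{lem:ahlf_reg} only separates points of the same generation, so $p\neq z$ for $z\in K_{n-1}\setminus\{q\}$ should instead be justified by combining $|p-q|\le 2\rho_q$ with \eqref{eq:relative-scale} applied in $K_{n-1}$.
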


\begin{proof}
Group the points $y\in K_n\setminus\{p\}$ according to their parents.
The points with parent $q$ give $S_{n,d}(p)$.  For every other parent
$z$, add and subtract $N(p-z)^{-d}$, and then add and subtract
$N(q-z)^{-d}$.  This gives \eqref{eq:field-recursion-new}.
\end{proof}

For later use, set
\begin{equation}
\label{eq:Xnd-definition}
    X_{n,d}
    =
    \sup_{p\in K_n}
    \frac{|\Lambda_{n,d}(p)|}{N\tau^d},
    \qquad
    X_n
    =
    \max_{1\le d\le M}X_{n,d}.
\end{equation}
Since $K_0=\{0\}$, we may put $X_{0,d}=0$. The four terms in \eqref{eq:field-recursion-new} play different roles.
The sibling term $S_{n,d}(p)$ can be bounded directly.  The transmission term $N F_{n-1,d}(q)$ carries
the field from the previous generation and gives the only contribution
of size $O(\tau X_{n-1})$.  The evaluation and cluster errors, $E_{n,d}^{\mathrm{ev}}(p)$ and $E_{n,d}^{\mathrm{cl}}(p)$ respectively,  are of
higher order in $\tau$. We now estimate each of these terms separately.

\begin{lemma}
\label{lem:field-local}
For $N$ sufficiently large, the following estimates hold for every
$1\le d\le M$ and $n\ge1$:
\begin{equation}
\label{eq:sibling-field-new}
N\rho_p^d|S_{n,d}(p)|
\lesssim_{M,\eta}
N\tau^d.
\end{equation}
If $n\ge2$, then
\begin{equation}
\label{eq:field-transmission-normalized}
\frac{
N\rho_p^d\,N|F_{n-1,d}(q)|
}{
N\tau^d
}
\lesssim_{M,\eta}
\tau X_{n-1,d}.
\end{equation}
\end{lemma}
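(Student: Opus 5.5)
For the sibling estimate \eqref{eq:sibling-field-new} we write $p=q+\rho_qQ_q(q_j)$, where $j$ is the index of $p$. The siblings of $p$ are $y_i=q+\rho_qQ_q(q_i)$ for $i\neq j$, so that
\[
|p-y_i|=\rho_q|Q_q(q_j)-Q_q(q_i)|\ge \tfrac12\rho_q|q_j-q_i|
\]
by the bi-Lipschitz property \eqref{eq:Q-bilip}. Hence
\[
N\rho_p^d|S_{n,d}(p)|\le 2^dN\Bigl(\frac{\rho_p}{\rho_q}\Bigr)^d\sum_{i\ne j}\frac{1}{|q_j-q_i|^d}.
\]
From \eqref{eq:old-scale-bound} and $a_{n-1}(q)\ge1$ we get $\rho_p/\rho_q\lesssim_{M,\eta} Nr\sin\theta_j$. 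For $d\ge2$ we then apply Lemma~\ref{lem:weighted} with $m=d$ and $a=0$, which bounds the sum by $N^d/\sin^d\theta_j$. The factors $\sin^d\theta_j$ cancel, and we are left with $N(Nr)^dN^d=N\tau^d$.

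The case $d=1$ needs a separate argument, since Lemma~\ref{lem:weighted} requires $m\ge2$ and the plain sum $\sum_{i\ne j}|q_j-q_i|^{-1}$ carries a logarithmic factor. Here I would use \eqref{eqTX21} to get the signed identity $\sum_{i\ne j}(q_j-q_i)^{-1}=q_j/(2\sin^2\theta_j)$. The deformation $Q_q$ is only a perturbation of the identity, so I would write
\[
\frac{1}{Q_q(q_j)-Q_q(q_i)}=\frac{1}{q_j-q_i}\bigl(1+O_M(\tau)\bigr)
\]
using \eqref{eq:Q-close-id}. The error term is bounded by $\tau\sum_{i\ne j}|q_j-q_i|^{-1}\lesssim \tau N\log N/\sin\theta_j$. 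Multiplying by $N\rho_p/\rho_q\lesssim N^2r\sin\theta_j$, the main term contributes $N^2r/\sin\theta_j\lesssim N^3r=N\tau$, since $\sin\theta_j\gtrsim N^{-1}$. The error term contributes $\tau\cdot N\tau\log N$, which is $\ll N\tau$ because $\tau\log N\to0$ (recall $\tau=N^{2-1/s}$ with $s<1/2$).

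For the transmission estimate \eqref{eq:field-transmission-normalized}, I would argue by pure algebra from the definition of $X_{n-1,d}$. We have $|\Lambda_{n-1,d}(q)|=N\rho_q^d|F_{n-1,d}(q)|\le X_{n-1,d}N\tau^d$, so
\[
N\rho_p^dN|F_{n-1,d}(q)|=N\Bigl(\frac{\rho_p}{\rho_q}\Bigr)^d|\Lambda_{n-1,d}(q)|\le X_{n-1,d}\,N\tau^d\,N(\rho_p/\rho_q)^d .
\]
It therefore suffices to check that $N(\rho_p/\rho_q)^d\lesssim\tau$. By the bound above, $N(\rho_p/\rho_q)^d\lesssim_{M,\eta} N(N^2r)^d/N^{d}\le N^{1-d}\tau^d\cdot N^{d}/N^{d}$; concretely, $N(N r)^d\le N(Nr)=N^2r=\tau$ for $d\ge1$, using $Nr<1$.

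The main obstacle is the $d=1$ sibling term, where we cannot simply take absolute values inside the sum. There I must use the cancellation in the Chebyshev identity \eqref{eqTX21}, and make sure that the $O(\tau)$ perturbation coming from $Q_q$ only costs a $\log N$ factor, which $\tau$ absorbs.
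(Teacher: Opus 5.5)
Your proposal is correct and follows the paper's proof essentially step by step: the bi-Lipschitz reduction plus Lemma~\ref{lem:weighted} with $a=0$ for $d\ge2$, the signed Chebyshev identity \eqref{eqTX21} with an $O_M(\tau)$ perturbation that costs only a $\tau\log N$ factor for $d=1$, and the rescaling $N(\rho_p/\rho_q)^d|\Lambda_{n-1,d}(q)|$ with $\rho_p/\rho_q\lesssim Nr$ for the transmission term. The intermediate chain in your transmission step is garbled, but it is harmless: the bound you conclude with, $N(\rho_p/\rho_q)^d\lesssim_{M,\eta} N(Nr)^d\le N^2r=\tau$, is exactly what is needed.
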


\begin{proof}
Suppose that $p$ is the $j$-th child of $q$.  Since the children of
$q$ were constructed using $Q_q$,
\[
S_{n,d}(p)
=
\rho_q^{-d}
\sum_{\ell\ne j}
\frac1{(Q_q(q_j)-Q_q(q_\ell))^d}.
\]
The scale bound gives
\begin{equation}
\label{eq:rho-child-parent}
\frac{\rho_p}{\rho_q}
=
r\frac{a_n(p)}{a_{n-1}(q)}
\lesssim_{M,\eta}
Nr\sin\theta_j.
\end{equation}

We first consider $d\ge2$. Since $Q_q$ is bi-Lipschitz as in \eqref{eq:Q-bilip}, we apply Lemma \ref{lem:weighted} with $a = 0$ to see that 
\[
\sum_{\ell\ne j}
\frac1{|Q_q(q_j)-Q_q(q_\ell)|^d}
\lesssim_{d,M}
\frac{N^d}{\sin^d\theta_j}.
\]
Hence we conclude 
\begin{align*}
N\rho_p^d|S_{n,d}(p)|
&\lesssim_{M,\eta}
N(Nr)^d\sin^d\theta_j
\frac{N^d}{\sin^d\theta_j} = C_{M,\eta}N\tau^d.
\end{align*}

It remains to treat $d=1$, where the absolute-value estimate above
would lose a logarithm.  By \eqref{eq:Q-close-id},
\[
Q_q(q_j)-Q_q(q_\ell)
=
(q_j-q_\ell)(1+\varepsilon_{j\ell}),
\qquad
|\varepsilon_{j\ell}|\lesssim_M\tau.
\]
For $\tau$ sufficiently small,
\[
\frac1{Q_q(q_j)-Q_q(q_\ell)}
=
\frac1{q_j-q_\ell}
+
O_M\left(\frac{\tau}{|q_j-q_\ell|}\right).
\]
Using \eqref{eqTX21} and the Chebyshev gap estimate from the proof of
Lemma~\ref{lem:weighted},
\[
\left|\sum_{\ell\ne j}\frac1{q_j-q_\ell}\right|
\lesssim
\frac1{\sin^2\theta_j},
\qquad
\sum_{\ell\ne j}\frac1{|q_j-q_\ell|}
\lesssim
\frac{N\log N}{\sin\theta_j}.
\]
Consequently,
\[
|S_{n,1}(p)|
\lesssim_M
\rho_q^{-1}
\left(
\frac1{\sin^2\theta_j}
+
\frac{\tau N\log N}{\sin\theta_j}
\right).
\]
Combining this with \eqref{eq:rho-child-parent} and
$\sin\theta_j\gtrsim N^{-1}$ gives
\[
N\rho_p|S_{n,1}(p)|
\lesssim_{M,\eta}
N\tau+N\tau^2\log N.
\]
Since $s<1/2$, we have $\tau\log N\to0$ as $N\to\infty$; after
increasing $N$ if necessary, the last expression is
$O_{M,\eta}(N\tau)$.  This proves
\eqref{eq:sibling-field-new} also for $d=1$.

For the transmission term,
\[
N\rho_p^d\,N|F_{n-1,d}(q)|
=
N
\left(\frac{\rho_p}{\rho_q}\right)^d
|\Lambda_{n-1,d}(q)|.
\]
Using
\[
\frac{\rho_p}{\rho_q}
\lesssim_{M,\eta}
Nr
=
\frac{\tau}{N}
\]
and the definition of $X_{n-1,d}$,
\begin{align*}
\frac{
N\rho_p^d\,N|F_{n-1,d}(q)|
}{
N\tau^d
}
&\lesssim_{M,\eta}
N^{1-d}\tau^dX_{n-1,d}\\
&\le
C_{M,\eta}\tau X_{n-1,d},
\end{align*}
because $d\ge1$ and $0<\tau<1$.
\end{proof}

\begin{lemma}
\label{lem:field-evaluation}
Fix $1\le d\le M$ and $n\ge2$, and write
\[
    p=q+h.
\]
Then
\begin{equation}
\label{eq:Eev-expansion-new}
E_{n,d}^{\mathrm{ev}}(p)
=
N\sum_{t=1}^{M-d}
c_{d,t}h^tF_{n-1,d+t}(q)
+
\mathcal R_{n,d}^{\mathrm{ev}}(p),
\end{equation}
where
\[
c_{d,t}
=
(-1)^t\binom{d+t-1}{t},
\]
with the convention that the sum is empty when $d=M$.

Moreover, for $1\le t\le M-d$,
\begin{equation}
\label{eq:Eev-explicit-new}
\frac{
N\rho_p^d
\left|Nh^tF_{n-1,d+t}(q)\right|
}{
N\tau^d
}
\lesssim_{M,\eta}
\tau^{t+1}X_{n-1,d+t},
\end{equation}
and
\begin{equation}
\label{eq:Eev-rem-new}
\frac{
N\rho_p^d
|\mathcal R_{n,d}^{\mathrm{ev}}(p)|
}{
N\tau^d
}
\lesssim_{M,\eta}
\tau^{M+1}.
\end{equation}
\end{lemma}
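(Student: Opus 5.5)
The plan is to Taylor expand $(p-z)^{-d}=(q-z+h)^{-d}$ about $q-z$ for each $z\in K_{n-1}\setminus\{q\}$, sum over $z$, and estimate the explicit terms and the remainder separately. Here $h=p-q=\rho_qQ_q(q_j)$, where $j$ is the index of $p$. The expansion is legitimate because $|h|\lesssim_M\rho_q$ by \eqref{eq:Q-node-bound}. Moreover, \eqref{eq:relative-scale} of Lemma~\ref{lem:prev_estimates}, applied at generation $n-1$ with $q$ in place of $p$, gives $|h|/|q-z|\lesssim_{M,\eta}\tau\le 1/2$ for $N$ large. The binomial series
\[
(1+w)^{-d}=\sum_{t\ge0}(-1)^t\binom{d+t-1}{t}w^t,\qquad w=\frac{h}{q-z},
\]
truncated at $t=M-d$, has remainder bounded by $C_{d,M}|w|^{M-d+1}$. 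Multiplying by $(q-z)^{-d}$ and summing over $z$ gives exactly the explicit terms $N\sum_{t=1}^{M-d}c_{d,t}h^tF_{n-1,d+t}(q)$, since the $t=0$ term cancels against $-(q-z)^{-d}$. The remainder $\mathcal R^{\mathrm{ev}}_{n,d}(p)$ satisfies
\[
|\mathcal R^{\mathrm{ev}}_{n,d}(p)|\lesssim_{M}N|h|^{M-d+1}\sum_{z\ne q}|q-z|^{-(M+1)}.
\]

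For \eqref{eq:Eev-explicit-new}, write $N\rho_p^d\,N h^tF_{n-1,d+t}(q)$ as
\[
N\Big(\tfrac{\rho_p}{\rho_q}\Big)^d\Big(\tfrac{h}{\rho_q}\Big)^t\,N\rho_q^{d+t}F_{n-1,d+t}(q)=N\Big(\tfrac{\rho_p}{\rho_q}\Big)^d\Big(\tfrac{h}{\rho_q}\Big)^t\Lambda_{n-1,d+t}(q).
\]
Here $|h/\rho_q|\lesssim_M1$, and $\rho_p/\rho_q\lesssim_{M,\eta}Nr\sin\theta_j\le\tau/N$ by \eqref{eq:rho-child-parent}. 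Also $|\Lambda_{n-1,d+t}(q)|\le N\tau^{d+t}X_{n-1,d+t}$, which uses $d+t\le M$. Dividing by $N\tau^d$ gives a bound
\[
\lesssim_{M,\eta}N^{1-d}\tau^{d+t}X_{n-1,d+t}\le\tau^{t+1}X_{n-1,d+t},
\]
because $N^{1-d}\tau^{d-1}\le1$ for $d\ge1$. This step is purely bookkeeping.

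For \eqref{eq:Eev-rem-new}, the normalized quantity is bounded by
\[
\frac{N\rho_p^d\cdot N\rho_q^{M-d+1}\sum_{z\ne q}|q-z|^{-(M+1)}}{N\tau^d}.
\]
Write $\rho_p^d=(\rho_p/\rho_q)^d\rho_q^d\lesssim (\tau/N)^d\rho_q^d$. This reduces the quantity to $\lesssim N^{1-d}\cdot N\rho_q^{M+1}\sum_{z\ne q}|q-z|^{-(M+1)}$. Applying \eqref{eq:geometric-sum} with exponent $M+1$ and $a=0$ at generation $n-1$ gives $\rho_q^{M+1}\sum|q-z|^{-(M+1)}\lesssim\tau^{M+1}$. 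The total is therefore $\lesssim N^{2-d}\tau^{M+1}$.

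The main obstacle is the case $d=1$, where this leaves a stray factor of $N$. To remove it, I would keep one more factor: the ratio $\rho_p/\rho_q\lesssim\tau/N$ contributes $\tau^d/N^d$, but I would also use the sharper sibling/ancestor structure. In the sum $\sum_z|q-z|^{-(M+1)}$, only the sibling part $\mathcal Y_1(q)$ attains the full $\tau^{M+1}$, and there the gaps satisfy $|q-z|\gtrsim\rho_{\hat q}\sin\theta/N$, so one can afford $\tau^{M}\cdot(\tau/N)$-type gains. If this sharpening is too delicate, the fallback is to truncate one order further (at $t=M-d+1$, bounding that extra explicit term crudely via \eqref{eq:geometric-sum}), so that the remainder becomes $\tau^{M+2}N^{2-d}\le\tau^{M+1}$ since $N\tau\cdot\tau$... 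Rather, I would use $N\tau^{M+2}\cdot N^{1-d}\le \tau^{M+1}$, which holds when $N\tau\le1$, false in general. So the genuine route is the first: the factor $N^{-d}$ from $(\rho_p/\rho_q)^d$ compensates the leading $N$ exactly when $d\ge1$, provided the remainder sum is normalized with $\rho_q$ rather than with an extra $N$. I expect to check carefully that $|\mathcal R|$ carries only the single outer factor $N$, making the bound $N^{1-d}\tau^{M+1}\le\tau^{M+1}$.
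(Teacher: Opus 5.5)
Your argument follows the paper's proof: expand $(q-z+h)^{-d}=(q-z)^{-d}(1+h/(q-z))^{-d}$ binomially to order $M-d$, bound the explicit terms by $\Lambda_{n-1,d+t}(q)$ together with $\rho_p/\rho_q\lesssim\tau/N$, and bound the remainder by \eqref{eq:geometric-sum} with exponent $M+1$ and $a=0$.

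The one flaw is that the \emph{obstacle} at $d=1$ does not exist; it comes from an arithmetic slip. In
\[
\frac{N\rho_p^d\cdot N\rho_q^{M-d+1}\sum_{z\ne q}|q-z|^{-(M+1)}}{N\tau^d}
\]
you cancelled the $\tau^d$ in the denominator but not its factor $N$. With $\rho_p^d\lesssim(\tau/N)^d\rho_q^d$, the quantity is directly
\[
\lesssim N^{1-d}\rho_q^{M+1}\sum_{z\ne q}|q-z|^{-(M+1)}\lesssim N^{1-d}\tau^{M+1}\le\tau^{M+1},
\]
which is exactly the paper's computation. So no sibling-scale sharpening and no extra order of truncation is needed; the latter fails anyway, as you noticed. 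Your closing sentence does reach the right bound, but it should be presented as this one-line calculation rather than as something still to be verified.
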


\begin{proof}
If $p$ is the $j$-th child of $q$, then
\[
    h=\rho_qQ_q(q_j),
\]
and hence $|h|\lesssim_M\rho_q$.  For
$z\in K_{n-1}\setminus\{q\}$, Lemma~\ref{lem:prev_estimates} gives
\[
    \frac{|h|}{|q-z|}
    \lesssim_{M,\eta}
    \frac{\rho_q}{|q-z|}
    \lesssim_{M,\eta}\tau.
\]
After taking $\tau$ sufficiently small, this ratio is at most $1/2$.
Thus the Taylor expansion below is uniform in $z$.

Taylor's formula gives
\[
(p-z)^{-d}
=
\sum_{t=0}^{M-d}
c_{d,t}
\frac{h^t}{(q-z)^{d+t}}
+
O_M\left(
\frac{|h|^{M-d+1}}{|q-z|^{M+1}}
\right).
\]
Subtracting the term with $t=0$ and summing over $z\ne q$ gives
\eqref{eq:Eev-expansion-new}.

For an explicit term,
\[
N\rho_p^d
\left|Nh^tF_{n-1,d+t}(q)\right|
=
N
\left(\frac{\rho_p}{\rho_q}\right)^d
\left(\frac{|h|}{\rho_q}\right)^t
|\Lambda_{n-1,d+t}(q)|.
\]
Since
\[
    |h|\lesssim_M\rho_q,
    \qquad
    \frac{\rho_p}{\rho_q}
    \lesssim_{M,\eta}\frac{\tau}{N},
\]
the definition of $X_{n-1,d+t}$ yields
\begin{align*}
\frac{
N\rho_p^d
\left|Nh^tF_{n-1,d+t}(q)\right|
}{
N\tau^d
}
&\lesssim_{M,\eta}
N^{1-d}\tau^{d+t}X_{n-1,d+t}\\
&\le
C_{M,\eta}\tau^{t+1}X_{n-1,d+t}.
\end{align*}
This proves \eqref{eq:Eev-explicit-new}.

For the remainder,
\begin{align*}
N\rho_p^d
|\mathcal R_{n,d}^{\mathrm{ev}}(p)|
&\lesssim_M
N^2\rho_p^d
|h|^{M-d+1}
\sum_{\substack{z\in K_{n-1}\\z\ne q}}
\frac1{|q-z|^{M+1}}\\
&\lesssim_{M,\eta}
N^2
\left(\frac{\rho_p}{\rho_q}\right)^d
\rho_q^{M+1}
\sum_{\substack{z\in K_{n-1}\\z\ne q}}
\frac1{|q-z|^{M+1}}.
\end{align*}
Applying Lemma~\ref{lem:prev_estimates} to $q$ with exponent
$M+1$ and $a=0$,
\[
\rho_q^{M+1}
\sum_{\substack{z\in K_{n-1}\\z\ne q}}
\frac1{|q-z|^{M+1}}
\lesssim_{M,\eta}
\tau^{M+1}.
\]
Hence
\[
N\rho_p^d
|\mathcal R_{n,d}^{\mathrm{ev}}(p)|
\lesssim_{M,\eta}
N^{2-d}\tau^{d+M+1}.
\]
After division by $N\tau^d$,
\[
\frac{
N\rho_p^d
|\mathcal R_{n,d}^{\mathrm{ev}}(p)|
}{
N\tau^d
}
\lesssim_{M,\eta}
N^{1-d}\tau^{M+1}
\le
C_{M,\eta}\tau^{M+1}.
\]
\end{proof}

\begin{lemma}
\label{lem:field-cluster}
For $1\le d\le M$ and $n\ge2$,
\begin{equation}
\label{eq:Ecl-field-new}
N\rho_p^d
|E_{n,d}^{\mathrm{cl}}(p)|
\lesssim_{M,\eta}
N\tau^{d+2}.
\end{equation}
Equivalently,
\begin{equation}
\label{eq:Ecl-field-normalized}
\frac{
N\rho_p^d
|E_{n,d}^{\mathrm{cl}}(p)|
}{
N\tau^d
}
\lesssim_{M,\eta}
\tau^2.
\end{equation}
\end{lemma}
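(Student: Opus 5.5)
For each $z\in K_{n-1}\setminus\{q\}$ I will Taylor expand the cluster bracket, using the barycenter condition to kill the first-order term, and then sum over $z$ with Lemma~\ref{lem:prev_estimates}. The proof mirrors the treatment of $\Delta_3$ in Lemma~\ref{lem:cluster-profiles-new}.

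Write the children of $z$ as $y_\ell=z+u_\ell$ with $u_\ell=\rho_zQ_z(q_\ell)$. By \eqref{eq:Q-barycenter} we have $\sum_{\ell}u_\ell=0$, and by \eqref{eq:Q-node-bound} we have $|u_\ell|\le 2\rho_z$. Since $p=q+h$ with $|h|\lesssim_M\rho_q$, Lemma~\ref{lem:prev_estimates} (applied to $q,z\in K_{n-1}$) gives
\[
\frac{|h|}{|q-z|}\lesssim_{M,\eta}\tau,\qquad \frac{\rho_z}{|q-z|}\lesssim_{M,\eta}\tau .
\]
Consequently $|p-z|\simeq|q-z|$ and $|u_\ell|/|p-z|\le 1/2$ once $N$ is large. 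We then expand
\[
(p-y_\ell)^{-d}=(p-z)^{-d}\Bigl(1-\frac{u_\ell}{p-z}\Bigr)^{-d}
=(p-z)^{-d}\Bigl(1+d\,\frac{u_\ell}{p-z}+O_d\Bigl(\frac{|u_\ell|^2}{|p-z|^2}\Bigr)\Bigr).
\]
Summing over $\ell$, the zeroth-order terms cancel the $N(p-z)^{-d}$ and the linear terms vanish by the barycenter identity. The bracket for $z$ is therefore bounded by
\[
C_{d}\sum_\ell\frac{|u_\ell|^2}{|p-z|^{d+2}}\lesssim_{M,\eta} N\,\frac{\rho_z^2}{|q-z|^{d+2}} .
\]
The remainder in the Taylor series is uniform because the ratio is at most $1/2$.

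Multiplying by $N\rho_p^d$ and summing over $z$ gives
\[
N\rho_p^d|E^{\mathrm{cl}}_{n,d}(p)|\lesssim_{M,\eta} N^2\Bigl(\frac{\rho_p}{\rho_q}\Bigr)^d\sum_{z\ne q}\frac{\rho_q^{d}\rho_z^2}{|q-z|^{d+2}} .
\]
Lemma~\ref{lem:prev_estimates}, applied at $q$ with exponent $d+2$ and $a=2$ (allowed since $d+2\ge2$ and $2\le d+2$), bounds the sum by $C\tau^{d+2}$. We also use $\rho_p/\rho_q\lesssim_{M,\eta}\tau/N$ from \eqref{eq:rho-child-parent}. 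The total is then $\lesssim N^{2-d}\tau^{2d+2}\le N\tau^{d+2}$, since $d\ge1$ and $\tau<1$. This is \eqref{eq:Ecl-field-new}, and dividing by $N\tau^d$ gives \eqref{eq:Ecl-field-normalized}.

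The only delicate point is the bookkeeping of powers of $N$. The crude count of $N$ children per cluster costs a factor $N$. This factor has to be absorbed, and it is absorbed by the gain $(\rho_p/\rho_q)^d\lesssim(\tau/N)^d$ rather than by a bound on $\rho_p$ itself. I also need $p$ to be a child of $q$ at generation $n$, with the scale bound \eqref{eq:old-scale-bound} available at generation $n$ under the section's standing hypotheses. If one prefers not to use the ratio, Lemma~\ref{lem:prev_estimates} can instead be applied directly at generation $n$. The ratio route is cleaner and I expect it to close without difficulty.
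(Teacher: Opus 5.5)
Your proposal is correct and follows the paper's proof essentially step for step. It uses the barycenter cancellation of the linear term in the expansion of $(p-z-u_\ell)^{-d}$, the comparability $|p-z|\simeq|q-z|$ from Lemma~\ref{lem:prev_estimates}, that same lemma applied at $q$ with exponent $d+2$ and $a=2$, and the ratio bound $\rho_p/\rho_q\lesssim_{M,\eta}\tau/N$, which together yield $N^{2-d}\tau^{2d+2}\le N\tau^{d+2}$.
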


\begin{proof}
For $z\ne q$, write its children as
\[
    y_\ell=z+u_\ell,
    \qquad
    u_\ell=\rho_zQ_z(q_\ell).
\]
By \eqref{eq:Q-barycenter},
\[
    \sum_{\ell=1}^N u_\ell=0.
\]

We first justify the expansion around $z$.  Since
$p=q+h$ with $|h|\lesssim_M\rho_q$, Lemma~\ref{lem:prev_estimates}
gives
\[
    |h|
    \lesssim_{M,\eta}
    \tau|q-z|.
\]
Thus, for $\tau$ sufficiently small,
\[
    |p-z|\simeq|q-z|.
\]
Similarly,
\[
    |u_\ell|
    \lesssim_M\rho_z
    \lesssim_{M,\eta}
    \tau|q-z|
    \lesssim_{M,\eta}
    \tau|p-z|.
\]
Hence the Taylor expansion in $u_\ell/(p-z)$ is uniform.

The linear term vanishes after summation in $\ell$, and therefore
\[
\sum_{\widehat y=z}(p-y)^{-d}
-
N(p-z)^{-d}
=
O_M\left(
\frac{N\rho_z^2}{|p-z|^{d+2}}
\right).
\]
Consequently,
\begin{align*}
N\rho_p^d\,|E_{n,d}^{\mathrm{cl}}(p)|
&\lesssim_M
N^2\rho_p^d
\sum_{\substack{z\in K_{n-1}\\z\ne q}}
\frac{\rho_z^2}{|p-z|^{d+2}}\\
&\lesssim_{M,\eta}
N^2
\left(\frac{\rho_p}{\rho_q}\right)^d
\sum_{\substack{z\in K_{n-1}\\z\ne q}}
\frac{\rho_q^d\rho_z^2}{|q-z|^{d+2}}.
\end{align*}
Here we used $|p-z|\simeq|q-z|$ in the second line.

Apply Lemma~\ref{lem:prev_estimates} with exponent
\[
    D=d+2
\]
and $a=2$.  Since $D-a=d$, it gives
\[
\sum_{\substack{z\in K_{n-1}\\z\ne q}}
\frac{\rho_q^d\rho_z^2}{|q-z|^{d+2}}
\lesssim_{M,\eta}
\tau^{d+2}.
\]
Therefore
\[
N\rho_p^d\,|E_{n,d}^{\mathrm{cl}}(p)|
\lesssim_{M,\eta}
N^2
\left(\frac{\tau}{N}\right)^d
\tau^{d+2}
=
N^{2-d}\tau^{2d+2}.
\]
Since $d\ge1$ and $0<\tau<1$,
\[
    N^{2-d}\tau^{2d+2}
    \le
    N\tau^{d+2},
\]
which proves \eqref{eq:Ecl-field-new}.  Dividing by $N\tau^d$
gives \eqref{eq:Ecl-field-normalized}.
\end{proof}

Our main result in this section follows by combining the previous Lemmas.

\begin{lemma}[Discrete potential estimates]
\label{lem:higher-potential-new}
For $M\ge2$ and $n\ge1$, assume that
\eqref{eq:old-scale-bound}, \eqref{eq:Q-close-id}, and
\eqref{eq:Q-barycenter} hold for all generations up to $n$.
If $N$ is sufficiently large, then
\begin{equation}
\label{eq:Lambda-main-new}
|\Lambda_{m,d}(p)|
\le
C_{M,\eta}N\tau^d,
\qquad
1\le d\le M,
\end{equation}
for every $1\le m\le n$ and every $p\in K_m$.
\end{lemma}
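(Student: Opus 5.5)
The plan is an induction on the generation $m$ for the normalized quantities $X_{m}=\max_{1\le d\le M}X_{m,d}$ defined in \eqref{eq:Xnd-definition}. The aim is a linear recursive inequality of the form
\[
X_m\le C_{M,\eta}\bigl(1+\tau X_{m-1}\bigr),\qquad m\ge1,
\]
with $X_0=0$ and a constant $C_{M,\eta}$ independent of $N$ and $m$. Once this is available, take $N$ large enough that $C_{M,\eta}\tau\le 1/2$; the recursion then gives $X_m\le 2C_{M,\eta}$ for every $m$ by induction, which is exactly \eqref{eq:Lambda-main-new}. The geometric input needed along the way, namely Lemma~\ref{lem:prev_estimates} applied at every generation $m\le n$, is available because its hypotheses are precisely the standing assumptions \eqref{eq:old-scale-bound}, \eqref{eq:Q-close-id}, \eqref{eq:Q-barycenter}.

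To derive the recursion, fix $p\in K_m$ with parent $q$ and multiply the decomposition \eqref{eq:field-recursion-new} of Lemma~\ref{lem:field-decomposition} by $N\rho_p^d/(N\tau^d)$. Each of the four terms is then bounded as follows.
\begin{itemize}
\item The sibling term is $\lesssim_{M,\eta}1$ by \eqref{eq:sibling-field-new}.
\item The transmission term is $\lesssim_{M,\eta}\tau X_{m-1,d}$ by \eqref{eq:field-transmission-normalized}.
\item The evaluation error, by \eqref{eq:Eev-expansion-new}, \eqref{eq:Eev-explicit-new} and \eqref{eq:Eev-rem-new}, is bounded by $\sum_{t=1}^{M-d}\tau^{t+1}X_{m-1,d+t}+\tau^{M+1}\lesssim \tau^2X_{m-1}+\tau^{M+1}$. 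The indices satisfy $d+t\le M$, so every $X_{m-1,d+t}$ is controlled by $X_{m-1}$.
\item The cluster error is $\lesssim\tau^2$ by \eqref{eq:Ecl-field-normalized}.
\end{itemize}
Taking the supremum over $p\in K_m$ and the maximum over $1\le d\le M$ gives the recursion, since $\tau<1$. For $m=1$ only the sibling term is present, because $F_{1,d}=S_{1,d}$, so $X_1\lesssim_{M,\eta}1$ directly. For the same reason the lemmas requiring $n\ge2$ are only invoked when $m\ge2$.

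The main point to watch is that the constants in the sibling and cluster estimates do not grow with $m$ or $N$. They must also not depend on $X_{m-1}$ in a way that breaks the absorption. Lemmas~\ref{lem:field-local}--\ref{lem:field-cluster} are stated with constants depending only on $M,\eta$, so this reduces to checking that $N$ is fixed large once and for all: large enough for those lemmas and for $C_{M,\eta}\tau\le1/2$, with no dependence on the generation. A secondary subtlety is the case $d=1$ of the sibling term, where a $\log N$ loss appears. It is harmless because \eqref{eq:sibling-field-new} already absorbed it using $\tau\log N\to0$ for $s<1/2$.
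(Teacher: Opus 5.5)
Your proposal is correct and follows essentially the same route as the paper: induction on the generation using the decomposition of Lemma~\ref{lem:field-decomposition}, with the four normalized pieces bounded by Lemmas~\ref{lem:field-local}--\ref{lem:field-cluster}, leading to $X_m\le C_{M,\eta}+C_{M,\eta}(\tau+2\tau^2)X_{m-1}$ and absorption once $N$ is large enough that this coefficient is at most $1/2$. The only difference is cosmetic: you fold the $\tau^2X_{m-1}$ term into $\tau X_{m-1}$ immediately using $\tau<1$, while the paper keeps the coefficient $\tau+2\tau^2$.
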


\begin{proof}
We argue by induction on $m\le n$.  For $m=1$,
Lemma~\ref{lem:field-decomposition} gives $F_{1,d}=S_{1,d}$, so
Lemma~\ref{lem:field-local} yields
\[
    X_1\le C_{M,\eta}.
\]

Now let $2\le m\le n$.  Multiply \eqref{eq:field-recursion-new}, with
$m$ in place of $n$, by $N\rho_p^d$, divide by $N\tau^d$, and use
Lemmas~\ref{lem:field-local}, \ref{lem:field-evaluation}, and
\ref{lem:field-cluster}.  We obtain
\begin{align}
X_{m,d}
\le{}&
C_{M,\eta}
+
C_{M,\eta}\tau X_{m-1,d}
+
C_{M,\eta}
\sum_{t=1}^{M-d}
\tau^{t+1}X_{m-1,d+t}
+
C_{M,\eta}\tau^2
+
C_{M,\eta}\tau^{M+1}.
\label{eq:Xnd-recursion}
\end{align}
When $d=M$, the sum in $t$ is empty.

Since
\[
    X_{m-1,d+t}\le X_{m-1}
\]
and, for $\tau\le1/2$,
\[
\sum_{t=1}^{M-d}\tau^{t+1}
\le
\frac{\tau^2}{1-\tau}
\le
2\tau^2,
\]
we obtain, after enlarging the constant,
\begin{equation}
\label{eq:Xn-recursion}
    X_m
    \le
    C_{M,\eta}
    +
    C_{M,\eta}
    \bigl(\tau+2\tau^2\bigr)X_{m-1}.
\end{equation}

Since $ \tau=N^2r=N^{2-1/s}\rightarrow0$ as $N\rightarrow \infty$ whenever $s<1/2$, we may choose $N$ so large that
\[
    C_{M,\eta}\bigl(\tau+2\tau^2\bigr)
    \le\frac12.
\]
Then
\[
    X_m
    \le
    C_{M,\eta}
    +
    \frac12X_{m-1}.
\]
Together with the bound for $X_1$, iteration gives
\[
    X_m\le C_{M,\eta},
    \qquad 1\le m\le n.
\]
Equivalently,
\[
|\Lambda_{m,d}(p)|
\le
C_{M,\eta}N\tau^d,
\qquad
1\le d\le M,
\]
for every $1\le m\le n$ and every $p\in K_m$.
\end{proof}
%
%
%

\section{The main construction}
\label{sec:finite-renormalization}

We now complete the two iterative procedures separated in the preceding
sections to provide our main construction. The main tool is Lemma \ref{lem:polynomial-renormalization}, which says that as long as $K_0, \dotsc, K_{n-1}$ have been constructed so as to satisfy the required assumptions, then one can choose a family of deformation polynomials $Q_p$ for $p \in K_{n-1}$ so that the next generation, $K_n$, is such that $g_n(x) - g_{n-1}(\hat{x})$ has small oscillation. More explicitly, we apply Lemma \ref{lem:higher-basic-error} to write the oscillation as 
\begin{equation}
	\label{eq:renormalization-input}
	\Delta_1(k)+\Delta_2(k)+\Delta_3(k)
	=
	(N-1)\log\rho_p+b_*+\beta_k
	+
	\sum_{m=1}^{M-1}
	\bigl(
	LP_{m+1,p}(q_k)+R_{m,p}(q_k)
	\bigr)
	+
	E_{M,p}(q_k),
\end{equation}
where
\[
R_{m,p}\in\mathcal P_m,
\qquad
\|R_{m,p}\|_{\mathrm{coeff}}
\lesssim_M N\tau^m,  \; \; \text{ and } |E_{M,p}(q_k)|
\lesssim_M N\tau^M.
\]
The crucial point is the triangular dependence in Lemma \ref{lem:higher-basic-error}, which in turn will allow us to cancel all the polynomial terms $LP_{m+1, p}(q_k) + R_{m,p}(q_k)$ in the above sum, provided that the $P_{m+1,p}$ are well-chosen. This cancellation, plus the smallness on $E_{M,p}(q_k)$, is how we show that $\Delta_1(k) + \Delta_2(k) + \Delta_3(k)$ has small oscillation.

With this main tool, we prove Lemma \ref{lem:finite-order-induction} (the analog of Lemma \ref{lem:basic-ind}), which is our main result. The proof of this Lemma is similar to that of Lemma \ref{lem:basic-ind}, where instead now our estimates for the $\Delta_i(k)$ come from Lemma \ref{lem:polynomial-renormalization}, instead of Lemma \ref{lem:basic-error}. This in turn yields our main result, Theorem \ref{thm:main-general-new}.

\begin{lemma}[Choice of polynomial corrections]
	\label{lem:polynomial-renormalization}
	Fix $M\ge2$, $\eta >0$, and assume that the sets $K_0, \dotsc K_{n-1}$ have been constructed so as to satisfy the assumptions \eqref{eq:old-children}-\eqref{eq:old-scale-bound} and additionally, that 
	\begin{equation}
		\label{eq:field-hypothesis-renormalization}
		|\Lambda_{n-1,d}(p)|
		\lesssim_M
		N\tau^d,
		\qquad
		1\le d\le M,
	\end{equation}
	holds uniformly for $p\in K_{n-1}$.
	
	Then if $N$ is sufficiently large, depending on $M$, $s$, and $\eta$,  there exists a choice of polynomials
	\begin{align}
		P_{m+1,p}, \qquad p \in K_{n-1},  \; \; 1 \le m \le M-1
	\end{align}
	for which \eqref{eq:new-correction-bounds} and
	\eqref{eq:new-correction-normalization} hold, and for which each $x = x_k = p + \rho_p Q_p(q_k) \in K_n$ satisfies
	\begin{equation}
		\label{eq:renormalized-error}
		\Delta_1(k)+\Delta_2(k)+\Delta_3(k)
		=
		(N-1)\log\rho_p+b_*+\beta_k+E_{M,p}(q_k),
	\end{equation}
	where
	\begin{equation}
		\label{eq:renormalized-error-bound}
		|E_{M,p}(q_k)|
		\lesssim_{M, \eta }
		N\tau^M.
	\end{equation}	
	Moreover,
	\begin{equation}
		\label{eq:renormalized-Q-derivative}
		\|Q_p-\mathrm{Id}\|_{C^1([-1,1])}
		\lesssim_M\tau,
	\end{equation}
	and
	\begin{equation}
		\label{eq:renormalized-barycenter}
		\sum_{k=1}^NQ_p(q_k)=0.
	\end{equation}
\end{lemma}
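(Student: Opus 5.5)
The plan is to choose the correction polynomials by induction on the order $m=1,\dots,M-1$, simultaneously for all $p\in K_{n-1}$, using the triangular structure of Lemma~\ref{lem:higher-basic-error}. At step $m$, assume $P_{\nu+1,z}$ has been fixed for every $z\in K_{n-1}$ and every $\nu\le m-1$. Assume also that these satisfy \eqref{eq:new-correction-bounds} and \eqref{eq:new-correction-normalization}. By Lemma~\ref{lem:higher-basic-error}, the polynomial $R_{m,p}\in\mathcal P_m$ depends only on these already-chosen polynomials, and not on $P_{m+1,z}$ or on any higher correction. It is therefore a well-defined polynomial, with $\|R_{m,p}\|_{\mathrm{coeff}}\lesssim_M N\tau^m$. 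The constant $C_M$ here is independent of the particular choices, since those choices only need to satisfy the stated bounds.

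We then define $P_{m+1,p}$ via Lemma~\ref{lem:L_prop} (valid once $N\ge 2M+1$). We solve $LP_{m+1,p}=-R_{m,p}$ in $\mathcal P_{m+1}/\R$ and pick the representative with $\sum_k P_{m+1,p}(q_k)=0$. This gives $\deg P_{m+1,p}\le m+1$ together with
\[
\|P_{m+1,p}\|_{C^1([-1,1])}\le \frac{C_M}{N}\|R_{m,p}\|_{\mathrm{coeff}}\lesssim_M \tau^m,
\]
which is exactly \eqref{eq:new-correction-bounds}–\eqref{eq:new-correction-normalization} at order $m$. The induction hypothesis is thus restored, and the next $R_{m+1,p}$ is determined.

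One subtlety must be handled here. Lemma~\ref{lem:higher-basic-error} is stated for a complete family $P_{2,p},\dots,P_{M,p}$. We therefore apply it to the final family once all orders are chosen, and use the uniqueness/triangular clause to see that each $R_{m,p}$ computed at the final stage coincides with the one used at step $m$. Equivalently, at intermediate stages we can set the not-yet-chosen polynomials to zero, which is admissible. The constants in the bounds must not grow with $m$ through the induction. This is fine because $M$ is fixed, and the bound $C_M$ of Lemma~\ref{lem:higher-basic-error} depends only on the implicit constant in \eqref{eq:new-correction-bounds}.

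After all orders are chosen, every term $LP_{m+1,p}+R_{m,p}$ in \eqref{eq:combined-profile-expansion} vanishes identically on the nodes. This leaves \eqref{eq:renormalized-error} with $|E_{M,p}|\lesssim_{M,\eta}N\tau^M$. The field hypothesis \eqref{eq:field-hypothesis-renormalization} is what Lemma~\ref{lem:higher-basic-error} requires. Finally,
\[
\|Q_p-\mathrm{Id}\|_{C^1}\le\sum_{\nu=1}^{M-1}\|P_{\nu+1,p}\|_{C^1}\lesssim_M\tau
\]
for $\tau<1$, which is \eqref{eq:renormalized-Q-derivative}. Moreover \eqref{eq:renormalized-barycenter} follows from $\sum_k q_k=0$ and the normalizations.

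The main obstacle is the bookkeeping of constants in the circular-looking dependence. The bound on $P_{m+1,p}$ feeds the implicit constant in \eqref{eq:new-correction-bounds}, and that constant in turn controls the constants for $R_{m+1,p}$. I would resolve this by tracking an explicit constant $A_\nu$ with $\|P_{\nu+1,p}\|_{C^1}\le A_\nu\tau^\nu$. Each $A_{m}$ is defined from $A_1,\dots,A_{m-1}$, a finite recursion of length $M-1$, so all of them are $O_M(1)$. We then choose $N$ large enough at the end that the smallness conditions of Lemmas~\ref{lem:Delta1-profile}–\ref{lem:cluster-profiles-new} hold, in particular the requirement that $\tau$ be small relative to these constants.
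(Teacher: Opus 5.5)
Your proposal is correct and follows essentially the same route as the paper. It inducts on the order $m$ simultaneously over all $p\in K_{n-1}$, solves $LP_{m+1,p}=-R_{m,p}$ with the zero-mean normalization via Lemma~\ref{lem:L_prop}, and uses the triangular dependence in Lemma~\ref{lem:higher-basic-error}; the paper likewise applies that lemma at intermediate stages with the not-yet-chosen corrections left out. Your explicit tracking of the constants $A_\nu$ through the finite recursion is a point the paper leaves implicit, but it does not change the argument.
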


\begin{proof}
	The proof proceeds by induction on $1 \le m \le M-1$, as follows. For $m=1$ (which corresponds to choosing a single quadratic $P_{2,p}$), we observe that Lemma \ref{lem:higher-basic-error} (with each $Q_p \equiv \mathrm{Id})$ gives us that 
	\begin{align*}
		\Delta_1(k) + \Delta_2(k) + \Delta_3(k) = (N-1) \log \rho_p + b_* + \beta_k + R_{1,p}(q_k) + E_{2,p}(q_k)
	\end{align*}
	with $\| R_{1,p} \|_{\mathrm{coeff}} \lesssim_{M} N \tau = N^3 r$, and $\abs{E_{2,p}} \lesssim_{M, \eta } N \tau^2 = N^5 r^2$. The reader should compare this estimate with Lemma \ref{lem:basic-error}; the additional information we have gained, is that the low order error in $N^3 r$ is a polynomial in $q_k$. By Lemma \ref{lem:L_prop}, we may choose a polynomial $P_{2,p}$ of degree at most $2$ for which 
	\begin{align*}
		\sum_{k=1}^N P_{2,p}(q_k) = 0, \; \; \; LP_{2,p} = -R_{1,p},  \; \; \; \text{ and }  \| P_{2,p} \|_{C^1([-1,1])} \lesssim_M N^{-1} \| R_{1,p} \|_{\mathrm{coeff}} \lesssim_M \tau.
	\end{align*}
	Notice that we may guarantee the first cancellation condition above, since $L(\mathrm{const}) = 0$. Once again applying Lemma \ref{lem:higher-basic-error}, but now with $Q_p(t) = t + P_{2,p}(t)$, (which indeed satisfies \eqref{eq:new-correction-bounds} and \eqref{eq:new-correction-normalization}, by the above), we then obtain the new errors
	\begin{align*}
		\Delta_1(k) & + \Delta_2(k) + \Delta_3(k) \\
		& = (N-1) \log \rho_p + b_* + \beta_k + L P_{2,p}(q_k) + R_{1,p}(q_k)   + E_{2,p}(q_k) \\
		& = (N-1) \log \rho_p + b_* +\beta_k  +  E_{2,p}(q_k),
	\end{align*}
	by the choice of $P_{2,p}$, and from the fact that $R_{1,p}$ is a linear polynomial which is independent of $P_{2,p}$. Moreover, that same Lemma yields the bounds $ \abs{E_{2,p}(q_k)} \lesssim_{M, \eta} N \tau^2$. This is exactly the desired base case for $m = 1$. 
	
	Assume now that for some $1 \le m \le M-2$, we have chosen the polynomials $P_{2,p}, \dotsc, P_{m+1, p}$ in such a way that for each $1 \le j \le m$, 
	\begin{align}\label{eq:desired_bds}
		\sum_{k=1}^{N} P_{j+1, p}(q_k) = 0, \; \; \; L P_{j+1,p} = - R_{j,p}, \; \; \; \text{ and } \| P_{j+1,p}  \|_{C^1([-1,1])} \lesssim_M \tau^j.
	\end{align}
	Here, $R_{1,p} \in \mathcal{P}_1$ is as in the base case, and for $j \ge 2$, $R_{j,p} \in \mathcal{P}_{j}$ is the remainder polynomial given by Lemma \ref{lem:higher-basic-error} with the deformation $Q_p(t) = t + P_{2,p}(t) + \cdots +  P_{j, p}(t)$. 
	Since each $R_{j,p}$ depends \textit{just} on the polynomials $P_{\ell, y}$ for $y \in K_{n-1}$ and $1 \le \ell \le j$, we have that $R_{j, p} \in \mathcal{P}_{j}$ is also the same remainder polynomial as in Lemma \ref{lem:higher-basic-error} for the deformation $Q_p(t) = t + P_{2,p}(t) + \cdots + P_{m+1,p} (t) + P_{m+2, p}(t)$ whenever $P_{m+2,p}(t)$ has degree at most $m+2$ and satisfies $\| P_{m+2,p} \|_{C^1([-1,1])} \lesssim_M \tau^{m+1}$. In particular, if we apply Lemma \ref{lem:higher-basic-error} for this deformation, we obtain the error	 
	\begin{align*}
		\Delta_1(k) & + \Delta_2(k) + \Delta_3(k) \\
		& = (N-1) \log \rho_p + b_* + \beta_k + \sum_{j=1}^{m+1} \left( LP_{j+1,p}(q_k) + R_{j,p}(q_k) \right) + E_{m+2, p}(q_k) \\
		& = (N-1) \log \rho_p + b_* + \beta_k + LP_{m+2,p}(q_k) + R_{m+1, p}(q_k)   +  E_{m+2, p}(q_k),
	\end{align*}	
	since the sum over $1 \le j \le m$ cancels exactly. Using Lemma \ref{lem:L_prop}, we then choose for each $p \in K_{n-1}$ some polynomial $P_{m+2, p} \in \mathcal{P}_{m+2}$ satisfying
$$
		\sum_{k=1}^N P_{m+2,p}(q_k) = 0, \qquad LP_{m+2,p} = -R_{m+1,p},$$
and		
		   $$ \| P_{m+2,p} \|_{C^1([-1,1])} \lesssim_M N^{-1} \| R_{m+1,p} \|_{\mathrm{coeff}} \lesssim_M \tau^{m+1},
$$
	where the last inequality is a consequence of Lemma \ref{lem:higher-basic-error}. This proves the induction claim on the choice of the polynomials $P_{j+1, p}$. Moreover, from the above inequality, we deduce also that with the choice of deformation $Q_p(t) = t + P_{2,p}(t) + \cdots + P_{m+2,p}(t)$, the error satisfies
	\begin{align*}
		\Delta_1(k) + \Delta_2(k) + \Delta_3(k) = (N-1) \log \rho_p + b_* + \beta_k + E_{m+2,p}(q_k),
	\end{align*}
	which for $m+1 = M-1$, is the desired conclusion of our Lemma. What is left to check are \eqref{eq:renormalized-Q-derivative} and \eqref{eq:renormalized-barycenter}, but these are clear for $N$ large by our choice of the polynomials $P_{j+1, p}$ and the bounds \eqref{eq:desired_bds}. 
\end{proof}

The preceding lemma shows how to construct the polynomial deformations at a single generation $K_{n}$ given that the past generation $K_{n-1}$ has been constructed. The next Lemma completes the induction on the generations $n \in \N$, giving Lemma \ref{lem:basic-ind} but for the larger range of dimensions $ 0 < s < 1/2$.

\begin{lemma}[Key induction lemma, $s < 1/2$]
\label{lem:finite-order-induction}
Fix $s \in (0,1/2)$, and $\eta>0$ small. Then for $N$ large enough, depending on $s$ and $\eta$, one can construct the sets $K_0, K_1, \dotsc, K_n$ of the form $K_0 = \{0\}$, 
\begin{align*}
	K_{n} = \bigcup_{p \in K_{n-1}} \left \{ x = p + a_{n-1}(p) r^{n-1} Q_p(q_k) \; : \; 1 \le k \le N    \right \} ,
\end{align*}
so that for each $n \ge 1$ and $ x \in K_n$, we have 
\begin{align}\label{eq:general-invariant-new}
	g_n(x) & \coloneqq  N^{-n} \sum_{\substack{ y \in K_n \\ y \ne x   }} \log\abs{x-y} =  c_n + N^{-n} \left( \beta_{j(x)} + e_n(x)  \right), \qquad \abs{e_n(x)} \le \eta,
\end{align}
where $c_n \in \R$ converges as $n \rightarrow \infty$. Moreover, for every $n \ge 2$ and $p \in K_{n-1}$, we have 
\[
	1 \le a_{n-1}(p) \lesssim_\eta N \sin \theta_{j(p)}
\]
and additionally, the $Q_p$ satisfy 
\begin{align*}
	\| Q_p - \mathrm{Id}   \|_{ C^{1}([-1,1]) } \lesssim_{s, \eta} N^2 r \ll 1,  \; \;\; \;  \sum_{k=1}^N Q_p(q_k) = 0.  
\end{align*}
\end{lemma}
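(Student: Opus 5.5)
The plan is to run the proof of Lemma~\ref{lem:basic-ind} again, with the error estimate of Lemma~\ref{lem:basic-error} replaced by Lemma~\ref{lem:polynomial-renormalization}. The key preliminary choice is the truncation order $M$. Since $\tau = N^2 r = N^{2-1/s}$, we have $N\tau^M = N^{1-M(1/s-2)}$. We therefore fix $M \ge 2$ with $M(1/s - 2) > 1$, which is possible exactly because $s<1/2$. Then $N\tau^M \to 0$ as $N\to\infty$, and we can take $N$ so large (depending on $s,\eta$, through $M$) that $C_{M,\eta}N\tau^M \le \eta$. We also take $N$ large enough that Lemmas~\ref{lem:ahlf_reg}, \ref{lem:prev_estimates}, \ref{lem:higher-potential-new} and \ref{lem:polynomial-renormalization} all apply.

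We induct on $n$, carrying the hypotheses \eqref{eq:old-children}--\eqref{eq:old-scale-bound} together with \eqref{eq:general-invariant-new}. The base case is identical to Lemma~\ref{lem:basic-ind}: take $a_0(0)=1$ and $Q_0=\mathrm{Id}$, so that $g_1(q_k) = (b_*+\beta_k)/N$ and $e_1 = 0$.

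For the inductive step, suppose $K_0,\dots,K_{n-1}$ have been built, with the scale bounds, the properties of $Q_z$, and $|e_{n-1}|\le\eta$. First, Lemma~\ref{lem:higher-potential-new}, applied up to generation $n-1$, gives the field bounds \eqref{eq:field-hypothesis-renormalization}. Note that these depend only on $K_0,\dots,K_{n-1}$ and not on the still-unchosen $a_{n-1}(p)$. Indeed, $\Lambda_{n-1,d}(p) = N\rho_p^d F_{n-1,d}(p)$ involves $\rho_p = a_{n-1}(p)r^{n-1}$, but we will impose the bound $a_{n-1}(p)\lesssim_\eta N\sin\theta_{j(p)}$ before invoking that lemma. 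So the order is: first fix $a_{n-1}(p)$, then verify the field bound, then choose the $Q_p$.

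Following Section~\ref{sec:basic-construction}, we set
\[
\log a_{n-1}(p) = \tfrac{N}{N-1}\bigl(\eta - \beta_{j(p)} - e_{n-1}(p)\bigr).
\]
This gives $a_{n-1}(p)\ge 1$ and $a_{n-1}(p)\lesssim_\eta N\sin\theta_{j(p)}$, and it is independent of the $Q_p$, because the renormalized expansion \eqref{eq:renormalized-error} contains $\rho_p$ only through the scalar term $(N-1)\log\rho_p$. With these scales fixed, Lemma~\ref{lem:polynomial-renormalization} produces $Q_p$ satisfying \eqref{eq:renormalized-Q-derivative}, which is exactly $\|Q_p-\mathrm{Id}\|_{C^1}\lesssim \tau = N^2r$, and the barycenter condition \eqref{eq:renormalized-barycenter}. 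The identity \eqref{eq:renormalized-error} then holds with $|E_{M,p}|\lesssim N\tau^M\le\eta$. Exactly the computation preceding Lemma~\ref{lem:basic-error} gives
\[
g_n(x) = c_n + N^{-n}\bigl(\beta_{j(x)} + E_{M,p}(q_{j(x)})\bigr),
\]
with the same recursion $c_n = c_{n-1} + N^{-n}[(N-1)(n-1)\log r + b_* + N\eta]$. Hence $c_n$ converges, $e_n = E_{M,p}$, and the induction closes. The new generation satisfies \eqref{eq:old-children}--\eqref{eq:old-scale-bound}, so the hypotheses propagate.

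The main point needing care is that the implicit constants are uniform in $n$. The bound on $E_{M,p}$ depends on the field bound, which in turn depends only on the scale and deformation hypotheses through constants $C_{M,\eta}$. The deformation constant in \eqref{eq:Q-close-id} is produced by \eqref{eq:renormalized-Q-derivative} with a constant depending only on $M$. There is therefore no feedback loop in which the constants grow with $n$, and $N$ can be fixed once, independently of the generation.
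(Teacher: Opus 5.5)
Your proposal is correct and follows the paper's proof essentially step by step. The matching points are:
\begin{itemize}
\item the same choice of $M$ (your condition $M(1/s-2)>1$ is the paper's $M>s/(1-2s)$);
\item the same order within each generation: first fix $a_{n-1}(p)$ by the formula of Section~\ref{sec:basic-construction}, then obtain the field bounds from Lemma~\ref{lem:higher-potential-new}, then choose the $Q_p$ via Lemma~\ref{lem:polynomial-renormalization};
\item the same recursion for $c_n$, with $e_n=E_{M,p}$.
\end{itemize}
The only blemish is your passing claim that the field bounds do not depend on $a_{n-1}(p)$. You correctly retract it right away, since $\Lambda_{n-1,d}(p)$ involves $\rho_p=a_{n-1}(p)r^{n-1}$, which is why the scales must be fixed first.
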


\begin{proof}
Fix $s \in (0,1/2)$ and $\eta >0$ small. Recall that for $N \ge 1$, we set $N r^s = 1$ and $\tau = N^2 r$. Since $s < 1/2$, notice that we may choose some fixed $M \ge 2$ large enough so that 
\begin{equation}
	\label{eq:finite-order-smallness}
	C_MN\tau^M  = C_M N^{2M+1} r^M = C_M N^{1 + M (2 -1/s)} \le \eta,
\end{equation} 
for all $N$ large enough depending on $M$ and $\eta$. It suffices to choose, for example, $M > s/(1-2s)$. In what follows, we will choose $N$ large enough, depending on $s$ and $\eta$ without explicitly stating, so as to apply our previous Lemmas where needed.

At generation one, choosing $a_0(0) = 1$, we may write for $x = q_k \in K_1$,
\begin{align*}
	g_1(x) = g_1(q_k) = N^{-1} \sum_{j \ne k} \log \abs{q_k -q_j} = \frac{b_k}N = \frac{b^*}N + \frac{\beta_k}N
\end{align*}
With the constant $c_1 = b^*/N$, we see that $e_1(x) = e_1(q_k) = 0$.
Of course the first choice, $a_0(0) =1$ satisfies $1 \le a_0(0) \lesssim_\eta N$.

We now argue by induction on the generation $n$. Namely, assume we have thus far constructed $K_{n-1}$ so that for all $1 \le m \le n-2$ and all $z \in K_m$, we have
\begin{align}\label{eqn:ind_hypothesis}
	1 \le a_{m}(z) \lesssim_\eta N \sin \theta_{j(z)}, \; \; \sum_{k=1}^N Q_z(q_k) = 0, \; \; \|Q_z - \mathrm{Id} \|_{C^1([-1,1])}  \lesssim_M \tau,
\end{align}
and so that for each $p \in K_{n-1}$
\begin{equation}
	\label{eq:induction-parent-simple}
	g_{n-1}(p)
	=
	c_{n-1}
	+
	N^{-(n-1)}
	\bigl(
	\beta_{j(p)}+e_{n-1}(p)
	\bigr),
	\qquad
	|e_{n-1}(p)|\le\eta.
\end{equation}
We now aim to build $K_n$, but we need to do so in a non-circular way: the first step is to choose the $a_{n-1}(p)$ for $p \in K_{n-1}$. Just as in the construction of Section \ref{sec:basic-construction}, we define $a_{n-1}(p)$ by the equation 
\begin{equation}
	\label{eq:scalar-choice-simple}
	\log a_{n-1}(p)
	=
	\frac{N}{N-1}
	\bigl(
	\eta-\beta_{j}-e_{n-1}(p)
	\bigr),
\end{equation}
where we write $j = j(p)$ for the index of $p$. Since $\beta_j\le0$ from Lemma \ref{lem:cheb} and since $\abs{e_{n-1}(p)} \le \eta$ by induction, 
we have $a_{n-1}(p)\ge1$. Moreover, using $\beta_j
=
\log\tfrac{\sin\theta_1}{\sin\theta_j}$ and $\sin\theta_1\simeq N^{-1}$, we estimate from above,
\begin{align*}
	a_{n-1}(p)
	&=
	\exp\left[
	\frac{N}{N-1}
	\bigl(
	\eta-\beta_j-e_{n-1}(p)
	\bigr)
	\right]
	\\
	&\lesssim_\eta
	\exp\left(
	-\frac{N}{N-1}\beta_j
	\right) =
	\left(
	\frac{\sin\theta_j}{\sin\theta_1}
	\right)^{N/(N-1)}
	\lesssim_\eta
	N\sin\theta_j.
\end{align*}
Thus with this choice of $a_{n-1}(p)$, we indeed have the bounds 
\begin{equation}
	\label{eq:geometric-bound-simple}
	1
	\le
	a_{n-1}(p)
	\lesssim_\eta
	N\sin\theta_j,
\end{equation}
and so all of our scalars $a_{n-1}$ satisfy \eqref{eq:current-scale-bound}, the induction hypothesis for the next generation. 

Our basic estimates by induction \eqref{eqn:ind_hypothesis} show that the estimates from Section \ref{sec:higher-potential} hold. Indeed, we may apply Lemma \ref{lem:higher-potential-new} to the generation $n-1$ to see that 
\begin{equation}
	\label{eq:field-at-current-generation}
	|\Lambda_{n-1,d}(p)|
	\lesssim_M
	N\tau^d,
	\qquad
	1\le d\le M,
	\qquad
	p\in K_{n-1}.
\end{equation}
Using the above bounds, we see that we can apply Lemma~\ref{lem:polynomial-renormalization} to obtain the polynomials
\[
P_{2,p},\ldots,P_{M,p},
\qquad
p\in K_{n-1},
\]
satisfying \eqref{eq:new-correction-bounds} and
\eqref{eq:new-correction-normalization}, and define the new deformations $Q_{p}$  for $p \in K_{n-1}$ as in \eqref{eq:new-deformation}: 
\begin{align*}
	Q_p(t) = t + P_{2,p}(t) + \cdots + P_{M,p}(t), \; \; p \in K_{n-1}.
\end{align*}
Then $K_{n}$ is constructed from $K_{n-1}$ as always, by replacing $p \in K_{n-1}$ by its $N$ children $x_k$ as in \eqref{eq:new-children}, with $\rho_p=a_{n-1}(p)r^{n-1}$:
\begin{align*}
	x_k = p + \rho_p Q_p(q_k), \; \; 1 \le k \le N.
\end{align*}
Notice that Lemma \ref{lem:polynomial-renormalization} also gives us our desired estimates on the maps $Q_p$ for $p \in K_{n-1}$:
\begin{align*}
	\sum_{k=1}^N Q_p(q_k) = 0, \; \; \| Q_p - \mathrm{Id}  \|_{C^1([-1,1])} \lesssim_M \tau.   
\end{align*}
Having completed the construction of \(K_n\), we turn to the estimates
 estimates on $g_n (x)$. 

 Let $x = x_k \in  K_n$ have index $k$ and parent $\hat{x} = p \in K_{n-1}$. We again write $j = j(p)$ for the index of $p$. As in the argument preceding Lemma \ref{lem:basic-ind}, we write 
\begin{align*}
	g_{n}(x_k) - g_{n-1}(p) = N^{-n} \left( \Delta_1(k) + \Delta_2(k) + \Delta_3(k)  \right),
\end{align*}
and Lemma \ref{lem:polynomial-renormalization} tells us that our $Q_p$ are chosen so as to yield
\begin{equation}
	\label{eq:renormalized-step-simple}
	\Delta_1(k)+\Delta_2(k)+\Delta_3(k)
	=
	(N-1)\log\rho_p+b_*+\beta_k+E_{M,p}(q_k),
\end{equation}
with error satisfying 
\begin{equation}
	\label{eq:EM-simple}
	|E_{M,p}(q_k)|
	\lesssim_M
	N\tau^M.
\end{equation}
In particular, we may write using the induction hypothesis,
\begin{align*}
	g_n(x_k)
	&=
	g_{n-1}(p)
	+
	N^{-n}
	\bigl(
	\Delta_1(k)+\Delta_2(k)+\Delta_3(k)
	\bigr)
	\\
	&=
	c_{n-1}
	+
	N^{-n+1}
	\bigl(
	\beta_j+e_{n-1}(p)
	\bigr)
	\\
	&\quad
	+
	N^{-n}
	\Bigl[
	(N-1)
	\bigl(
	\log a_{n-1}(p)+(n-1)\log r
	\bigr)
	+
	b_*+\beta_k+E_{M,p}(q_k)
	\Bigr].
\end{align*}
By \eqref{eq:scalar-choice-simple}, the choice of $a_{n-1}(p)$, we have 
\[
N\bigl(
\beta_j+e_{n-1}(p)
\bigr)
+
(N-1)\log a_{n-1}(p)
=
N\eta,
\]
so we can rewrite 
\[
g_n(x_k)
=
c_n
+
N^{-n}
\bigl(
\beta_k+ e_n(x)
\bigr),
\]
where
\begin{equation}
	\label{eq:cn-recursion}
	c_n
	=
	c_{n-1}
	+
	N^{-n}
	\bigl[
	(N-1)(n-1)\log r+b_*+N\eta
	\bigr], \; \; \; e_{n}(x) = E_{M,p}(q_k) 
\end{equation}
Since $\abs{e_n(x)} =  \abs{E_{M, p}(q_k)} \le C_M N \tau^M \le \eta$ for $N$ large enough, by \eqref{eq:finite-order-smallness}, this concludes the proof by induction. Moreover, by \eqref{eq:cn-recursion},
\[
|c_n-c_{n-1}|
\le C_{N,r,\eta}\,nN^{-n}.
\]
Since this bound is summable in $n$, the sequence
$(c_n)$ converges as $n\to\infty$. 
\end{proof}

Just as in Theorem \ref{thm:one-third}, one uses Lemma \ref{lem:finite-order-induction} to deduce our main result:
\begin{theorem}\label{thm:main-general-new}
	For every $0 < s< 1/2$, there exists an $s$-Ahlfors regular compact set $K \subset \R \subset \C$ so that the harmonic measure in $\C \setminus K$ with pole at infinity $\omega^\infty$ is equivalent to $\mathcal{H}^s|_K$, i.e., there is some $C \ge 1$ so that 
	\begin{align*}
		 C^{-1} \, \mathcal{H}^s|_K(F)\le \omega^\infty(F) \le C \mathcal{H}^s|_K(F) \text{ for all } F \subset K \text{ Borel.}
	\end{align*}
	In particular, $\dim \omega^\infty = s$. 
\end{theorem}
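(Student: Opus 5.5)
The plan is to deduce Theorem \ref{thm:main-general-new} from Lemma \ref{lem:finite-order-induction} exactly as Theorem \ref{thm:one-third} was deduced from Lemma \ref{lem:basic-ind}. First, fix $s\in(0,1/2)$ and a small $\eta>0$, and take $N$ large so that Lemma \ref{lem:finite-order-induction} applies. That lemma gives sets $K_n$ with $1\le a_{n-1}(p)\lesssim_\eta N\sin\theta_{j(p)}$ and $\|Q_p-\mathrm{Id}\|_{C^1}\lesssim N^2r\ll1$. In particular, each $Q_p$ is $2$-bi-Lipschitz on the nodes and maps $[-1,1]$ into $[-2,2]$. Thus the hypotheses of Lemma \ref{lem:ahlf_reg} hold with $C_1=C_\eta$. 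That lemma yields a compact $s$-Ahlfors regular $K\subset\R$ with $K_n\to K$ in the Hausdorff distance, and $\mu_n=N^{-n}\sum_{x\in K_n}\delta_x\rightharpoonup\mu$, where $\mu$ is an $s$-Ahlfors regular probability measure on $K$. Ahlfors regularity makes $\mu$ comparable to $\mathcal H^s|_K$.

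The core step is to show that $g_\mu(x)=\int\log|x-y|\,d\mu(y)$ equals $c:=\lim c_n$ for every $x\in K$. Given $x\in K$, choose $x_n\in K_n$ with $x_n\to x$, for instance the ancestors along the symbolic branch coding $x$. By \eqref{eq:general-invariant-new}, $|g_n(x_n)-c_n|\le N^{-n}(C_N+\eta)\to0$, since the $\beta_j$ are bounded by a constant depending on $N$. It then remains to show $g_n(x_n)-g_\mu(x)\to0$. Following \cite[Lemma 3.3]{DJJ23}, split the integral at scale $r^{\ell}$. On the far region $|y-x|\ge r^{\ell}$ the kernel is continuous, and weak convergence together with $x_n\to x$ handles it. On the near region, the contribution is controlled uniformly in $n$ by the counting bounds in \eqref{eqn:nca}: the points of $\mathcal Y_{m}(x_n)$ have mass $\lesssim N^{m-n}$ and distances $\gtrsim N^{-2}r^{n-m}$, so
\[
\Bigl|N^{-n}\sum_{y\in K_n,\ 0<|y-x_n|<r^\ell}\log|x_n-y|\Bigr|\lesssim \sum_{m\le n-\ell+C} N^{m-n}\bigl((n-m)|\log r|+\log N\bigr),
\]
which tends to $0$ as $\ell\to\infty$, uniformly in $n$. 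The $\mu$-integral over the near region is small by the same estimate, using $\mu(B(x,t))\lesssim t^s$. This gives $g_\mu\equiv c$ on $K$.

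Finally, $K$ has positive capacity, being $s$-Ahlfors regular with $s>0$, so the equilibrium measure is the unique probability measure on $K$ with constant logarithmic potential on $K$ (up to the standard quasi-everywhere caveat, which is vacuous here because the potential is constant everywhere on $K$). Hence $\mu=\mu_K$. Since $K$ is uniformly perfect, $\C\setminus K$ is Wiener regular and $\mu_K=\omega^\infty$. Therefore $\omega^\infty\simeq\mathcal H^s|_K$, which gives the comparability in the theorem, and $\dim\omega^\infty=s$ follows from the definition \eqref{eqn:dim_measure} and the fact that $\mathcal{H}^s|_K$-null sets are exactly the $\omega^\infty$-null sets.

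I expect the main obstacle to be the uniform control of the singular near-diagonal part in the potential convergence. There the $N$-dependent separation $N^{-2}r^{n-m}$ enters, but since $N$ is fixed it only costs constants and the geometric factor $N^{m-n}$ dominates.
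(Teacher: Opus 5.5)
Your proposal is correct and takes essentially the same route as the paper. The paper deduces the theorem from Lemma \ref{lem:finite-order-induction} just as Theorem \ref{thm:one-third} was deduced from Lemma \ref{lem:basic-ind}: Lemma \ref{lem:ahlf_reg} gives the Ahlfors regular limit, then $g_\mu$ is shown to be constant on $K$, and uniqueness of the equilibrium measure plus Wiener regularity give $\mu=\omega^\infty$. The only difference is that you write out the near/far splitting behind $g_n(x_n)\to g_\mu(x)$, which the paper cites from \cite[Lemma 3.3]{DJJ23}.
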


\bibliographystyle{alpha}
\bibliography{bibl}

\end{document}